\renewcommand*{\ALG@name}{Algorithm}
\newcommand{\sw}{s_{\mathrm{w}}}
\newcommand{\so}{s_{\mathrm{o}}}
\newcommand{\fw}{f_{\mathrm{w}}}
\newcommand{\fo}{f_{\mathrm{o}}}
\newcommand{\muw}{\mu_{\mathrm{w}}}
\newcommand{\mug}{\mu_{\mathrm{g}}}
\newcommand{\muo}{\mu_{\mathrm{o}}}
\newcommand{\um}{\mathcal{U}}
\newcommand{\Hm}{\mathcal{H}}
\newcommand{\nG}{\nu_\Gamma}
\newcommand{\wm}{\mathcal{W}}
\newcommand{\bmm}{\mathcal{B}}
\def\adj{\mathop{\mbox{Adj}}\nolimits}
\providecommand{\CS}{'\! S}
\newtheorem{theorem}{Theorem}[section]
\newtheorem{corollary}{Corollary}[section]
\newtheorem{claim}{Claim}[section]
\newtheorem{lemma}{Lemma}[section]
\newdefinition{definition}{Definition}[section]
\newdefinition{remark}{Remark}[section]
\begin{document}

\begin{frontmatter}



\title{Structure of undercompressive shock waves in three-phase flow in porous media}




\author[1]{Luis F. Lozano G.\corref{cor1}%
\fnref{fn1}}
\ead{luisfer99@gmail.com}
\author[2]{Ismael Ledoino}
\ead{ismael.sledoino@gmail.com}
\author[3]{Bradley J. Plohr }
\ead{bradley.j.plohr@gmail.com}
\author[4]{Dan Marchesin }
\ead{marchesi@impa.br}
\cortext[cor1]{Corresponding author}
\affiliation[1]{organization={Laboratory for Applied Mathematics (LAMAP), Federal University of Juiz de Fora},
            addressline={Rua J. L. Kelmer, s$/$n}, 
            postcode={36036-900},
            city={Juiz de Fora MG},
            country={Brazil}}
\affiliation[2]{organization={Laboratório Nacional de Computação Científica},
            addressline={Av. G. Vargas, 333},
            postcode={25651-076},
            city={Petrópolis, RJ},
            country={Brazil}}
\affiliation[3]{organization={},
            addressline={Los Alamos},
            postcode={ },
            city={New Mexico},
            country={USA}}
\affiliation[4]{organization={Instituto de Matem\'atica Pura e Aplicada},
            addressline={Estrada Dona Castorina 110},
            postcode={22460},
            city={Rio de Janeiro RJ},
            country={Brazil}}

\begin{abstract}
Undercompressive shocks are a special type of discontinuities that satisfy the viscous profile criterion rather than the Lax inequalities. These shocks can appear as a solution to systems of two or more conservation laws. This paper presents the construction of the undercompressive shock surface for two types of diffusion matrices. The first type is the identity matrix. The second one is the capillarity matrix associated with the proper modeling of the diffusive effects caused by capillary pressure. We show that the structure of the undercompressive surface for the different diffusion matrices is similar. We also show how the choice of the capillarity matrix influences the solutions to the Riemann problem. 
\end{abstract}



\begin{keyword}
Three-phase flow \sep Capillary pressure effects \sep Riemann solutions \sep Flow in porous media.


\MSC[2010] 35C06 \sep 35D30 \sep 76S05 \sep 76T30.

\end{keyword}

\end{frontmatter}

\section{Introduction}
The study of multi-phase flow in porous media is of great importance in various fields, ranging from Petroleum Engineering to Environmental Science. Understanding the complex dynamics and interactions of different fluid phases in such media is crucial for accurately predicting and optimizing the behavior of subsurface systems. One fundamental challenge in this area lies in solving the Riemann problem for three-phase flow, where intricate modeling difficulties and intricate nonlinear wave interactions arise \cite{Abreu2014,Abreu2006,Marchesin2001}.

The Riemann problem serves as a fundamental building block for analyzing the behavior of solutions to the governing system of partial differential equations (PDEs) in three-phase flow. In this context, the Riemann problem is an initial value problem, where the initial condition is specified as a pair of piecewise constant states. However, the system of PDEs for three-phase flow is not strictly hyperbolic, so non-classical waves can arise. 
Such waves can be discontinuous or continuous. Discontinuous waves are undercompressive shock waves, while continuous waves are transitional rarefaction waves \cite{L.1990}, leading to intricate wave interactions that are difficult to predict and capture accurately. 

We refer the reader to \cite{Marchesin2001,mehrabi2020solution} and references therein for a detailed review of the three-phase flow theory. 
Previous works have studied system of PDEs where there is loss of hyperbolicity. For example, the conservation equations for three-phase flow with relative mobility functions of the Corey type \cite{corey1956three} present an isolated point (called an umbilic point) where the strict hyperbolicity fails \cite{isaacson1988riemann,castaneda2012singular,L.1992a,Marchesin2014,Schaeffer1987}. In the case of equations for three-phase flow with relative mobility functions based on the Stone model, the conservation equations present an elliptic region \cite{bell1986conservation,fayers1989extension,holden1990strict}. Elliptic regions are obtained generically by approximating the Stone model flux functions by quadratic polynomial flux function, e.g., \cite{Holdem1987,azevedo1995multiple,Matos2008}.    

In \cite{L.1990,L.1992a}, Isaacson {\it et al.} identified undercompressive waves in the Riemann solution of systems of two conservation laws with general diffusion matrices. Undercompressive shocks are special in that, unlike classical shocks, the Rankine-Hugoniot constraints linking the shock speed and the states on either side of the shock interface are insufficient to determine the shock speed and exact amplitude of waves emerging along outgoing characteristics when the shock wave is perturbed. It was shown in \cite{Kulikov1968,Mailybaev2004,Dan2006} that the requirement of the existence of viscous profiles provides exactly the number of additional equations to resolve this indeterminacy.

In \cite{Kulikov1968,Mailybaev2004,Dan2006}, the authors explicitly introduced the additional equations resulting from the viscous profile requirement. They also developed a constructive method for sensitivity analysis under perturbations of problem parameters. In \cite{V.2002}, Azevedo, Marchesin, Plohr, and Zumbrun showed that, in the presence of nontrivial diffusion terms, such as those resulting from capillary pressure, it is not the elliptic region that plays the role of an instability region; rather, it is the region defined by Majda-Pego \cite{Majda1985}, which depends on the diffusion terms as well and contains the elliptic region. In \cite{Abreu2014,Abreu2006}, Abreu et al. performed two-dimensional simulations of flow in a heterogeneous porous medium involving non-classical waves. The front waves were remarkably stable. 

Undercompressive shocks form a two-parameter family. The focus of this paper is the construction of this family. For this purpose, we introduce the notion of wave manifold, which presents a geometric approach to the fundamental waves that appear in the solutions of Riemann problems for conservation law systems. The wave manifold is 3-dimensional, and undercompressive shocks form a two-dimensional surface. See \cite{issacson1992global,marchesin1994topology,azevedo2010topological}  and references therein for more details.

Undercompressive shocks are tied to the diffusive term (diffusion matrix) of the parabolic system associated with the system of conservation laws. An undercompressive shock represents an admissible discontinuity with viscous profile which is the connection between two saddle points. In ODE theory, this type of connection is structurally unstable, unlike connections between node and saddle that occur in classical Lax shocks, see \cite{L.1990,L.1992a,Guckenheimer1986}.

We consider two types of diffusion matrices: the first is the identity, and the second is the nonlinear diffusion matrix related to the capillary pressure effects. In the case of the identity matrix, the surface is constructed in analytic form. This construction is possible because undercompressive shocks and the associated orbits are  restricted along invariant lines, which can be computed analytically. When we consider the nonlinear matrix, such invariant lines do not exist. Numerical procedures are needed; see \ref{ApendiceA}.

This work is organized as follows. 
In Section~\ref{sec:MathModel}, we present the system of conservation laws that models three-phase flow in porous media under a few physical simplifications. We also introduce the Corey model with quadratic permeabilities for the fluid phases adopted in this work. Then we recall some basic facts of the theory for systems of conservation laws, bifurcation theory of Riemann solutions, and the wave curve method. Finally, we present the criterion of the viscous profile, which is used together with Lax's criteria to determine admissible discontinuities. Section \ref{sec:geomUSS} briefly introduces the necessary geometric framework to study the undercompressive wave surface within the context of the wave manifold. In particular, we define coordinate systems to visualize the undercompressive surface. We study in Section~\ref{section:reduced} the reduction of the original three-phase flow system to the scalar Buckley-Leverett conservation law. This reduction occurs along certain invariant lines in state space, which play a prominent role in constructing Riemann solutions. Then we present properties of a few exceptional points at the intersection of some bifurcation manifolds with the invariant lines. We also present a subdivision of the saturation triangle regarding the location of the umbilic point. In Section \ref{section:TransMap}, we compute the undercompressive map along the invariant lines. We also present some results for undercompressive rarefactions in this model. We construct the surface of undercompressive shocks in Section \ref{section:TransSurface} for the identity viscosity matrix. 
We construct the surface of undercompressive shocks for the realistic matrix in Section \ref{section:GeneralMatrix}. Surprisingly, this surface displays essentially the same topological structure found in the case of the identity viscosity matrix. Finally, in Section~\ref{sec:_conclus}, some conclusions are summarized.
The numerical experiments in this work used the specialized software ``ELI" \cite{ELI_web}, developed at the Laboratory of Fluid Dynamics of IMPA. This software package allowed us to obtain and explore: integral curves, Hugoniot curves, the main bifurcation loci, and phase portraits of dynamical systems and wave curves, which are all fundamental for the construction of Riemann solutions, for example \cite{V.2010,Azevedo2014,Lozano2018} and references therein. Numerical calculations in MATLAB were also performed.

\section{Mathematical Model}
\label{sec:MathModel}
In this section, we present the conservation law system that governs immiscible three-phase flow in porous media. The assumptions that we consider are $1D$ horizontal flow; incompressible fluids; negligible dispersion, gravitational and capillary effects; there are neither sources nor sinks; the fluids fill the entire pore rock space; the temperature is constant, and there is no mass interchange between phases. Let us consider the conservation of mass for each phase
\begin{equation}\label{eq:system1}
\frac{\partial\sw}{\partial t}+\frac{\partial \fw}{\partial x}=0, \qquad
\frac{\partial \so}{\partial t}+\frac{\partial \fo}{\partial x}=0,
\end{equation}
where $s_w(x,t)$ and $s_o(x,t)$, $x\in \mathbb{R}$ and $t>0$, are the saturations of water and oil, defined by $0\leq s_w\leq 1$, $0\leq s_o\leq 1$, and $0\leq s_g\leq 1$ (with $s_w+s_0+s_g=1$).
Let $f_w(s_w,s_o)$, $f_o(s_w,s_o)$, and $f_g(s_w,s_o)$ be the fractional flow functions for water, oil, and gas phases (with $1 = f_w+f_o+f_g$), which are defined by  
\begin{equation}\label{eq:flowfunct}
f_i= \lambda_i/\lambda_T,~~~~~\lambda_i =k\, k_{ri} /\mu_i,\qquad i = w,o,g,
\mbox{ and } \quad \lambda_T = \lambda_w+ \lambda_o + \lambda_g,
\end{equation}
where $\lambda_i$ is the relative mobility of phase $i$, $\lambda_T$ is the total mobility, $k$ is the permeability, $k_{ri}$ is the relative permeability of phase $i$, and $\mu_i$ is the viscosity of phase $i$, for $i=w,o,g$. As in \cite{V.2010,Azevedo2014,Andrade2017}, in this work, we consider quadratic relative permeabilities, {\it i.e.}, $k_{ri}=s_i^2,$ for $i=w,o,g$.
The system \eqref{eq:system1} can be written in vector form as
\begin{equation}
    \label{eq:sistem3vectorial}
    \dfrac{\partial U}{\partial t}+\dfrac{\partial F(U)}{\partial   x}=0.
\end{equation}
In this system, $U$ is the state $(\sw , \so )^T$ and the function $F:\Omega\subseteq \mathbb{R}^2\rightarrow\mathbb{R}^2$, $ F(U)$ is the flux $(\fw(U), \fo(U))^T $. The system \eqref{eq:sistem3vectorial} is called {\it hyperbolic} if the Jacobian matrix of the flux function $DF(U)$ has real eigenvalues, $\lambda_1(U)$ and $\lambda_2(U)$, $U\in\Omega.$ The eigenvalues are called {\it the characteristic speeds} of the system \eqref{eq:sistem3vectorial}. In the region of hyperbolicity, we have the natural ordering $\lambda_1(U) \leq \lambda_2(U)$.
We call $\lambda_s =\lambda_1$ the slow-family characteristic speed and $\lambda_f=\lambda_2$ the fast-family characteristic speed. A hyperbolic system for which  $\lambda_s\neq\lambda_f,\forall\, U\in\Omega$ is said to be strictly hyperbolic. In the case in which there exist points in $\Omega$ at which $\lambda_1(U)= \lambda_2(U)$, these points are known as umbilic points \cite{shearer1987} or coincidence points \cite{Medeiros1992}, and we say that the system is not strictly hyperbolic. The model considered in this work has an umbilic point in the interior of $\Omega$ \cite{Marchesin2014,Schaeffer1987}. 

We represent the saturation space $\Omega$ using barycentric coordinates in an equilateral triangle (called saturation triangle), see Fig.~\ref{fig:sattrian1}; here, we use the labels W, O, and G to designate the states with saturation $s_w = 1$, $s_o = 1$, and $s_g = 1$, respectively. The state $\um$ in the inner of saturation triangle (Fig.~\ref{fig:sattrian1}) is an umbilic point, and its coordinates are 
\begin{equation}\label{eq:CoordUm}
\mathcal{U} = \left(\mu_w,\mu_o,\mu_g\right)/(\mu_w+\mu_o+\mu_g).
\end{equation}
\begin{figure}[htbp]
   \centering
 \includegraphics[width=0.42\textwidth]{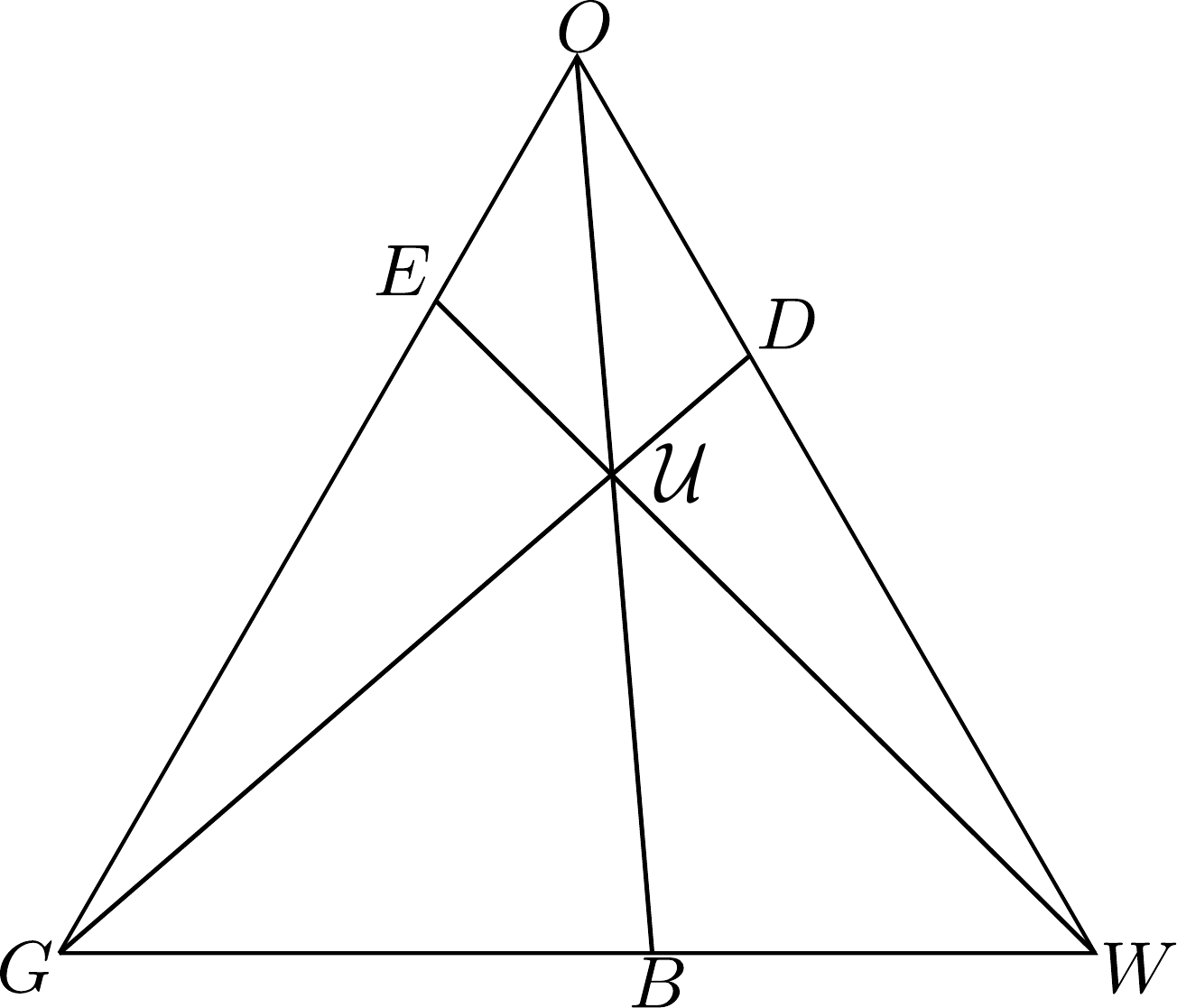}
  \caption{Saturation triangle $GWO$. The invariant lines [G, D], [W, E], and [O, B]. The state $\um$ represents the umbilic point of type $II_O$ in this work; see Corollary \ref{cor:NuUmbilic}. }
   \label{fig:sattrian1}
\end{figure}

We are interested in studying the Riemann problem for the system of partial differential equations (PDEs) \eqref{eq:system1}, {\it i.e.}, the system \eqref{eq:system1} subject to the initial conditions
\begin{equation}
\label{eq:initialCond_TP}
	U(x,0)=
	\begin{cases} 
	U_L, & x<0, \\
	U_R, & x>0, 
	\end{cases}  
\end{equation}
where $U_L$ and $U_R$ are constant. In some cases, we use $L$ and $R$. 

Scale invariance is a property of Riemann problems \eqref{eq:sistem3vectorial}, \eqref{eq:initialCond_TP}. It means that a coordinate change $(x, t) \rightarrow (c x, c t)$, with $c > 0$, changes neither the system of equations nor the initial conditions. Consequently, the solution of Riemann problems is expected to depend only on the ratio $\xi = x/t$. These invariant solutions are built from centered rarefaction waves, shock waves, composite waves, and constants states, e.g., \cite{Schaeffer1987,DeSouza1992,Lax1957}.
In general, the solution of a Riemann problem consists of a sequence of wave groups separated by constant states. A wave group refers to a contiguous sequence of waves, i.e., a sequence with no speed gaps between successive waves. 

\subsection{Rarefaction Waves}
Rarefaction waves are smooth self-similar solutions of system \eqref{eq:sistem3vectorial} represented by
\begin{equation}\label{eq:rarefact}
    U(x,t) = \widehat{U}(\xi),\qquad  \xi= x/t.
\end{equation}
Substituting \eqref{eq:rarefact} into the system \eqref{eq:sistem3vectorial}, we obtain the rarefaction curve by finding the solution to the eigenvalue problem
\begin{equation}\label{eq:systemEDO_rarefaction}
   (DF(\widehat{U}) - \lambda_i(\widehat{U}) I)r_i(\widehat{U}) = 0, 
\end{equation}
where $I$ is the identity matrix and $(\lambda_i(\widehat{U}),r_i(\widehat{U}))$, $i= s,\,f,$ is the eigenpair associated with the Jacobian matrix of the flux function $DF$. The eigenvector $r_i$ is parallel to $d\widehat{U} /d\xi$ with $\xi = \lambda_i,\,i= s,\,f$. Along such rarefaction curves, the corresponding eigenvalue (or characteristic speed) must be monotone, forcing these curves to stop at {\it inflection loci}, where the characteristic speed has an extremum \cite{Smoller1994}.

\subsection{Shock Waves}
Shock waves are bounded discontinuous self-similar solutions of the system \eqref{eq:sistem3vectorial} with a jump connecting the left state $U^-$ and the right state $U^+$ that satisfy the Rankine-Hugoniot condition:
 \begin{equation}\label{Si9}
F(U^+)-F(U^-)-\sigma\left( U^+ - U^-\right)=0,
\end{equation}
where $\sigma=\sigma(U^-;U^+)$ is the speed of propagation of the discontinuity. For a fixed $U^-$, the set of states $U$ such that the triple $(U^-,U,\sigma)$ satisfies the Rankine-Hugoniot condition \eqref{Si9} for some $\sigma$ is called the {\it Hugoniot locus}  for the state $U^-$ and is denoted by $\mathcal{H}(U^-)$.

Since by allowing discontinuous solutions, nonunique solutions appear in Riemann problems, the relation \eqref{Si9}  must be supplemented by an additional criterion that allows identifying unique and physically relevant solutions. Such criteria are known as entropy criteria or admissibility criteria (e.g., Lax \cite{Lax1957}, Liu \cite{Liu1975}, viscous profile \cite{Gelfand1959}, or vanishing adsorption \cite{Petrova2022}). In general, admissibility criteria are not equivalent \cite{Schecter1996, Gomes1989}. In this work, we construct solutions in which the shocks satisfy the viscous profile criterium.

\subsection{Composite Waves}
A composite wave of type rarefaction/shock (or shock/rarefaction) is a solution of the Riemann problem  \eqref{eq:sistem3vectorial}, \eqref{eq:initialCond_TP} consisting of a sequence of a rarefaction (or shock) wave followed by a shock wave (or rarefaction), with no segment of constant states separating them. We have an $i$-composite when the two waves are associated with the same $ith$-family, $i=s$ or $f$; otherwise, we have an undercompressive composite wave.

\subsection{Admissibility Criteria}
\subsubsection{Lax criterion}
 For conservation laws that are genuinely nonlinear and strictly hyperbolic, in \cite{Lax1957}, Lax introduced an eligibility criterion that associates discontinuous waves with family characteristics so that the characteristics of a family impinge on both sides of the discontinuity. In contrast, characteristics of the other family cross the discontinuity, experiencing a deviation.  
 \begin{definition}
 \label{def:Laxshocks}
 We use the following nomenclature for the discontinuities joining $U^-$ and  $U^+$ with characteristic speeds related to the propagation speed as below: 
    \begin{itemize}
\item Lax $s$-shock: $\lambda_s(U^+)<\sigma<\lambda_s(U^-) ~~\mbox{ and }~~\sigma<\lambda_f(U^+).$
\item Lax $f$-shock: $	\lambda_f(U^+)<\sigma<\lambda_f(U^-) ~~\mbox{ and }~~\lambda_s(U^-)<\sigma.$
\item  crossing: $ \lambda_s(U^-)<\sigma<\lambda_f(U^-) ~~\mbox{ and }~~\lambda_s(U^+)<\sigma<\lambda_f(U^+).$
\item overcompressive: $\lambda_f(U^+)<\sigma<\lambda_s(U^-).$
\end{itemize}
\end{definition}
Lax used the nomenclature $1$- and $2$-shock for slow and fast shocks, while we adopted the nomenclature $s$- and $f$-shocks. For more general conservation laws, characteristics can be tangent to discontinuity. In certain cases, we allow some equalities in the previous Lax configurations, giving way to a generalized Lax criterion \cite{Keyfitz1980}. If the crossing discontinuity has a viscous profile, we call it an {\it undercompressive shock} (also called {\it transitional shock} \cite{L.1990, L.1992a, Lozano2018, Aparecido1998}).
 \subsubsection{Viscous profile criterion}
 \label{sec:viscousprofile}
Consider the parabolic system of PDEs associated with the system \eqref{eq:sistem3vectorial}
\begin{equation}\label{eq:sistem2}
\dfrac{\partial U}{\partial t}+\dfrac{\partial F(U)}{\partial   x}=\epsilon\dfrac{\partial}{\partial x}\left(\bmm(U)\dfrac{\partial U}{\partial x}\right),
\end{equation}
the coefficient $\epsilon$ of the diffusive term in \eqref{eq:sistem2} is often rather small. The viscosity matrix $\bmm(U)$ depends on the relative permeabilities and viscosities of the three fluids and the partial derivatives of capillary pressures. Following \cite{V.2002} we define $\bmm(U) = Q(U)P'(U)$, where $Q(U)$ and $P'(U)$ are given by
  \begin{equation}\label{system6}
Q(U)= \left( \begin{array}{cc}
\lambda_w (1-f_w) & -\lambda_w f_o\\
-\lambda_o f_w & \lambda_o (1-f_o) \end{array} \right)
~\mbox{ and }~
P'(U)= \left( \begin{array}{cc}
\frac{\partial p_{wg}}{\partial s_w} & \frac{\partial p_{wg}}{\partial s_o}  \\
\frac{\partial p_{og}}{\partial s_w} & \frac{\partial p_{og}}{\partial s_o}  \end{array} \right).
\end{equation}
In \cite{V.2002}, it is shown that $\bmm(U)$ is symmetric and positive definite in the interior of the saturation triangle $\Omega$ and $\det \bmm(U)=0$ on $\partial \Omega$. We adopt the model from \cite{Abreu2014,V.2002} for the capillary pressure differences, then $P'(U)$ is given by
\begin{equation}\label{PUprima}
P'=
\left( \begin{array}{cc}
 \varsigma+\tau &  \tau \\
 \tau &  \tau \end{array} \right),\quad
 \varsigma =  \frac{c_{ow}}{2}  \frac{(1+s_w)}{s_w^{3/2}} ~\mbox{ and }~ \tau = \frac{c_{og}}{2} \frac{(2-s_w-s_o)}{(1-s_w-s_o)^{3/2}},
\end{equation}
where $c_{ow}$ and $c_{og}$ are positive constants. 
\begin{definition}
    A solution $U(x,t)\in\Omega$ of \eqref{eq:sistem2} is a {\it traveling wave} if there exist states $U^-$ and $U^+$ $\in\Omega$, a speed $\sigma\in\mathbb{R}$ and a vector function $\widetilde{U}(\xi)$ such that 
    \begin{equation}\label{V4A}
    U(x,t) = \widetilde{U}(\xi), \qquad \xi = (x-\sigma t)/\epsilon,
    \end{equation}
    and satisfies the following boundary condition
\begin{equation}\label{V4}
\lim_{\xi\rightarrow-\infty}\widetilde{U}(\xi)={ U^-} ~~\mbox{   and   }~~\lim_{\xi\rightarrow +\infty}\widetilde{U}(\xi)={ U^+}.
\end{equation}
\end{definition}

Substituting \eqref{V4A} and \eqref{V4} into \eqref{eq:sistem2}, we obtain (omitting tildes)
\begin{equation}\label{V8}
\mathcal{B}(U(\xi))\, \dfrac{d U}{d \xi }= F(U(\xi))-F( U^-)-\sigma \left(U(\xi)-{ U^-}\right),
\end{equation}
A traveling wave solution corresponds to a connection from $U^-$ to $U^+$ and is called a {\it viscous profile} of the shock wave.

Because $\bmm(U)$ is singular on the boundary of $\Omega$, we introduce the rescaled variable $\eta$, defined by
\begin{equation}\label{V7}
\frac{d \xi}{d\eta } = \det(\bmm(U)).
\end{equation}  
Using (\ref{V8}) in (\ref{V7}), we obtain the ODE system
\begin{equation}\label{V9}
 \dfrac{d U}{d \eta }=\adj(\bmm(U))\left[ F(U)-F( U^-)-\sigma \left(U-{ U^-}\right)\right],
 \end{equation}
where $\adj(\bmm(U))$  is the {\it adjugate matrix} (transpose of the matrix of cofactors). 

One standard procedure to find traveling wave solutions is analyzing the state space \cite{Volpert1994}. To do this, we locate and classify all equilibria of the system \eqref{V9}. Notice that $U^-$ and $U^+$ are equilibria of the vector field of \eqref{V9} and that all equilibria lie on the Hugoniot locus with $\sigma = \sigma(U^-;U^+)$ satisfying \eqref{Si9}.

\begin{definition}\label{def:typeEqui}
 	We have the following combinations between the type of orbit connecting $U^-$ and $U^+$ and the type of these equilibria:
\begin{itemize}
	\item Lax $s$-shock, $S_s~$, $U^-$ is a repeller and $U^+$ is a saddle.
	\item Lax $f$-shock, $S_f~$, $U^-$ is a saddle and $U^+$ is an attractor.
    \item Undercompressive shock, $S_u~$, $U^-$ and $U^+$ are saddles.
    \item Overcompressive shock, $S_O~$, $U^-$ is a repeller and $U^+$ is an attractor.
\end{itemize}
\end{definition}

\subsection{Characteristic discontinuities}
We use the notation found in \cite{V.2015} to describe the Riemann solution.
Discontinuities with or without viscous profiles are classified in Definition \ref{def:Laxshocks}. 
There are eight types of characteristic discontinuities, {\it i.e.}, with propagation velocity equal to a characteristic speed. When these discontinuities satisfy the viscous profile, we call them characteristic shocks. In this work, the following characteristic shocks arise: 
\begin{definition}\label{def:charequilibium}
 Characteristic shocks are limit cases of $i$-shock $i\in\{s,f,u\}$.
			\begin{enumerate}
				\item Left-char.$\,s$-shock:  $U^-$ is a repeller-saddle and $U^+$ is a saddle, and  satisfies $$\lambda_s(U^+)<\sigma=\lambda_s(U^-) ~\mbox{ and }~\sigma<\lambda_f(U^+).$$ 
				\item Right-char.$\,f$-shock: $U^-$ is a saddle and $U^+$ is a saddle-attractor and  satisfies $$ \lambda_f(U^+)=\sigma<\lambda_f(U^-) ~\mbox{ and }~\lambda_s(U^-)<\sigma.$$
                \item Left-char.$\,u$-shock:  $U^-$ is a saddle-attractor and $U^+$ is a saddle, and  satisfies  $$ \lambda_s(U^-)<\sigma=\lambda_f(U^-) ~\mbox{ and }~\lambda_s(U^+)<\sigma<\lambda_f(U^+).$$
		\end{enumerate}
\end{definition}
A shock of type $X \in \{s,f,u,O\}$ from states $A$ to $B$ is written as $A\xrightarrow{S_X} B$. A {\it prime} on the left or the right of $S$ indicates left- or right-characteristic shocks, respectively. For instance, $M\xrightarrow{\CS_u} N$ indicates an undercompressive shock from $M$ to $N$ with $\sigma(M;N)=\lambda_s(M)$ ({\it i.e.}, a left-char.$\,u$-shock).  On the other hand, for $i = \{s,f\}$, $A\xrightarrow{R_i} B$ means that there exists an $i$-rarefaction from $A$ to $B$ with speed equal to $\lambda_i$. In the wave sequence $A\xrightarrow{a} B\xrightarrow{b} C $, we use $v_f^a$ to represent the final velocity of the $a$-wave and $v_i^b$ to represent the initial velocity of the $b$-wave. The wave sequence is considered \emph{compatible} if and only if
\begin{eqnarray}\label{eq:Cond_Compat}
v_f^a \le v_i^b.
\end{eqnarray}
\subsection{Bifurcation loci}
This section defines subsets of $\Omega$ that play a fundamental role in constructing Riemann solutions that include undercompressive shocks; see \cite{L.1992a, V.2015}.
\begin{definition}\label{def:seconBif}
A state $U$ belongs to the {\it secondary bifurcation loci} for the family $i\in\{s,f\}$  if there exists a state $U'\neq U$ such that
\begin{equation}
U' \in \mathcal{H}(U) \mbox{  with  } \lambda_i(U') = \sigma(U;U') \mbox{  and  } l_i(U')(U'-U)=0,  
\end{equation}
where $l_i(U')$ is the left eigenvector of the Jacobian matrix $DF(U')$ associated with $\lambda_i(U')$.
\end{definition}  
 \begin{definition}\label{def;DobleContact}
A state $U$ belongs to the $(i,j)-${\it Double Contact loci} if there is a state $U'$ such that
\begin{equation}
U'\in \mathcal{H}(U) \mbox{ with } \lambda_i(U) = \sigma(U;U')=\lambda_j(U'),
\end{equation}
where the families $i$ and $j$ may be the same or different.
\end{definition}
In this work, we call {\it fast double contact} the $(f,f)$-double contact loci and {\it mixed double contact} the $(s,f)$ (or $(f,s)$)-double contact loci. 
\begin{definition}\label{def:iLRExt}
	Let $C$ be a set in state space. For $i \in\{s,f\}$,  we define the sets:
	 \begin{itemize}
     \item $i$-left-extension of $C$:\\
     $ \{U^+\in\Omega:\exists\, U^- \in C \,\mbox{ such that }\, U^+\in\mathcal{H}(U^-),\,\sigma(U^-;U^+)=\lambda_i(U^-)\}.$
     \item $i$-right-extension of $C$:\\
     $ \{U^+\in\Omega:\exists\, U^- \in C\,\mbox{ such that }\, U^+\in\mathcal{H}(U^-),\,\sigma(U^-;U^+)=\lambda_i(U^+)\}.$
 \end{itemize}
\end{definition}
\subsection{Wave Curves}\label{sec:wavecurve}
A {\it wave curve} is a parameterized curve in the state space containing waves of the same group. In the literature, the algorithm used to construct wave curves is usually the \emph{local continuation algorithm}, which Liu developed in \cite{Liu1975}, as a generalization of the Ole\u{\i}nik construction for the Riemann solution of scalar equations \cite{Oleinik1957}. However, in cases where the system is not strictly hyperbolic, the traditional approach fails, and it is necessary to consider nonlocal branches of the Hugoniot locus \cite{castaneda2016}. In this work, we use the \emph{global continuation algorithm} for constructing wave curves; see \cite{Lozano2018} for details. The basis of this algorithm lies in a list of compatible wave groups (see \cite{Schecter1996,V.2015}), which must satisfy the criterion for admissible viscous profiles. 

A wave curve is determined by its initial state, family, and its sense (forward or backward). A forward wave curve of the family $i$, starting at the state $L$, is a parametrization of the states $U$ in state space, with increasing speed, that are reached by $L$, on its right, by an $i$-wave group. We name it the {\it forward $i$-wave curve}, $\wm_i^+(L)$. Similarly, we name the {\it backward $i$-wave curve}, $\wm_i^-(R)$, starting at the state $R$, the parametrization of states $U$ in the state space with decreasing speed that reach the state $R$ on its left. 
\section{The geometry of the undercompressive shock surface}
\label{sec:geomUSS}
In this section, we briefly introduce the necessary geometric framework to study the undercompressive wave surface within the context of the wave manifold. We refer the reader to \cite{issacson1992global,marchesin1994topology,azevedo2010topological} for a more comprehensive treatment of the wave manifold.

We shall use the following notation for the quantities describing a shock wave: $U^- =(s_w^-,s_o^-)^T$ is the left state, $U^+ = (s_w^+,s_o^+)^T$ is the right state, and $\sigma$ is the shock speed. The triple  $(U^-,U^+,\sigma)\in\mathcal{P}:=\Omega\times\Omega\times \mathbb{R}$ must satisfy the Rankine-Hugoniot condition \eqref{Si9}. Then, the Rankine-Hugoniot condition constitutes two equations for the five variables $U^+$, $U^+$, and $\sigma$.

The solution set of the Rankine-Hugoniot condition contains not only the shock points, for which $U^+ \neq U^-$, but also the trivial solutions with $U^+ =  U^-$ and arbitrary $\sigma$. Through a blow-up procedure, the singularity in $U^+ =  U^-$ is removed; see \cite{issacson1992global,marchesin1994topology,azevedo2010topological} for details. In this way, it is possible to define {\it the wave manifold} $W\subset \mathcal{P}$ as a 3-dimensional submanifold comprising the shock and rarefaction points. 

As a $3$-dimensional manifold, $W$ can be visualized (except near certain exceptional points) using the 3-tuples $\left(U^-,\sigma\right)$ or $\left(U^+,\sigma\right)$ as local coordinates.
Three projection maps are useful for working with the wave manifold:
{\it left projection} $\pi^-: W \rightarrow \Omega$ is defined by
\begin{equation}\label{eq:projPim}
    \pi^-\left(U^-,U^+,\sigma\right):=U^-,
\end{equation}
which projects the $U^-$-coordinate; similarly {\it the right projection} $\pi^+$ projects the $U^+$-coordinate; and {\it the speed projection} $\sigma$ projects the $\sigma$-coordinate.

\subsection{Undercompressive map}
Under suitable assumptions about the viscous conservation law, undercompressive shock waves form an open codimension-1 submanifold  $\mathcal{T}$ of $W$, called the {\it undercompressive manifold}. On the other hand, the undercompressive shock surface $\mathcal{T}$ with boundary can also be visualized as a 2-dimensional manifold embedded in $W$. 

Consider the sets $\mathcal{D}_{\mathcal{T}}, \, \mathcal{D}_{\mathcal{T}}' \subset \Omega$ as the images of $\mathcal{T}$ under $\pi^-$ and $\pi^+$:
\begin{equation}
    \mathcal{D}_{\mathcal{T}} := \pi^-[\mathcal{T}], \quad \mbox{ and }\quad \mathcal{D}_{\mathcal{T}}':= \pi^+[\mathcal{T}].
\end{equation}
Suppose that $\pi^-|_{\mathcal{T}}$ (meaning the restriction of $\pi^-$ to the submanifold $\mathcal{T}$) is invertible. In this case, the inverse map $(\pi^-|_{\mathcal{T}})^{-1}$ maps $ \mathcal{D}_{\mathcal{T}}$ to  $\mathcal{T}$, and we can form the {\it undercompressive map} 
\begin{equation}
    \label{eq:tranMap}
    T:= \pi^+\circ (\pi^-|_{\mathcal{T}})^{-1},
\end{equation}
which maps $\mathcal{D}_{\mathcal{T}} \rightarrow \mathcal{D}'_{\mathcal{T}}: U^- \mapsto U^+$. We call the sets $\mathcal{D}_{\mathcal{T}}$ and $\mathcal{D}_{\mathcal{T}}'$ the  {\it undercompressive domain} and {\it codomain}, respectively. Likewise, we can define the {\it undercompressive speed map}
\begin{equation}
    \label{eq:tranMapsspeed}
    \sigma_{\mathcal{T}}:= \sigma \circ (\pi^-|_{\mathcal{T}})^{-1},
\end{equation}
which maps $\mathcal{D}_{\mathcal{T}} \rightarrow \mathbb{R}: U^- \mapsto \sigma$. The undercompressive submanifold $\mathcal{T}$ is a graph in $W$ lying over the undercompressive domain $\mathcal{D}_{\mathcal{T}}$ :
\begin{equation}
    \label{eq:setUngraphi}
\{(U^-,T(U^-),\sigma_{\mathcal{T}}(U^-)): U^-\in \mathcal{D}_{\mathcal{T}} \}.
\end{equation}
\begin{remark}
    The inverse undercompressive map $T^{-1}$ can be advantageous in certain situations. We can define it like how we define $T$ in \eqref{eq:tranMap}, as follows:
    \begin{equation}
    \label{eq:tranMapinv}
    T^{-1}:= \pi^-\circ (\pi^+|_{\mathcal{T}})^{-1},
\end{equation}
which maps $\mathcal{D}'_{\mathcal{T}} \rightarrow \mathcal{D}_{\mathcal{T}}: U^+ \mapsto U^-$. 
\end{remark}
\begin{remark}\label{rem:twowings}
    Often $\mathcal{T}$ is drawn in $\Omega$ (or $\Omega\times\mathbb{R}$) as two wedges: first when $(s_w, s_o) = (s_w^-, s_o^-)$ (or $(s_w, s_o,\sigma) = (s_w^-, s_o^-,\sigma))$ and second when $(s_w, s_o) = (s_w^+, s_o^+)$  (or $(s_w, s_o,\sigma) = (s_w^+, s_o^+,\sigma))$, giving the misleading impression that  $\mathcal{T}$ has two “palm leaves”. But this is only for better visualization of the saddle-saddle connections in the context of the dynamical systems.
\end{remark}

\section{The role of reduced two-phase flow}\label{section:reduced}
In this section, we study the reduction of our system given in \eqref{eq:system1} to the scalar Buckley-Leverett conservation law for two-phase flow in porous media. As we see in \cite{Azevedo2014} and Remark 3.3.8 in \cite{Lozano2018}, this reduction follows straightforwardly along each of the edges of the saturation triangle. However, for Corey\textquotesingle s model with quadratic relative permeability functions with $\mathcal{B}=I$, it also occurs along the straight lines $[G, D],[W, E]$, and $[O, B]$ as well, making them into invariant lines of our PDE \eqref{eq:system1}, see Fig.~\ref{fig:sattrian1}. Note that these invariant lines coincide with the secondary bifurcation locus. According to \cite{L.1992a} and \cite{DeSouza1992}, for viscosity matrix equal to the identity, the only admissible non-Lax shocks arise in the associated traveling wave ODEs \eqref{V9} as heteroclinic orbits lying on the invariant lines. These undercompressive shocks are essential in solving the general Riemann problem under the viscous profile criterion. Because of this reason, we are interested in characterizing these invariant lines and studying their properties. 
\subsection{Parameters and coordinates}
Points of the state space along the line $[G,D]$ (see Fig.~\ref{fig:sattrian1}) satisfy the identity
\begin{equation}\label{surface1}
\frac{s_w}{\mu_w} =  \frac{s_o}{\mu_o} =  \frac{s_w+s_o}{\mu_w + \mu_o}.
\end{equation}
Let us define along $[G,D]$ the {\it effective saturation} and {\it effective viscosity} respectively as
\begin{equation}\label{surface2}
s = s_w+s_o , ~\mbox{ and }~ \mu_{wo}= \mu_w+\mu_o.
\end{equation} 
Then, the line segment $[G,D]$ is parametrized by $s$ as
\begin{equation}\label{surface3}
s_w = \frac{\mu_w}{\mu_{wo}}s  ~\mbox{ and }~  s_o = \frac{\mu_o}{\mu_{wo}}s,
\end{equation}
where $0 \leq s \leq 1$. The system of PDEs \eqref{eq:sistem2} can be written as
\begin{equation}\label{sysSurface1}
\displaystyle\frac{\partial}{\partial t}\left(
\begin{array}{c}
s_w  \\
s_o
\end{array}
\right)
+\displaystyle\frac{\partial}{\partial x}\left(
\begin{array}{c}
F_w(s_w,s_o) \\
F_o(s_w,s_o)
\end{array}
\right)
= \displaystyle\frac{\partial^2}{\partial x^2}\left(
\begin{array}{c}
s_w \\
s_o
\end{array}
\right).
\end{equation}
Thus, we see that system \eqref{sysSurface1} reduces to the scalar Buckley-Leverett equation 
\begin{equation}\label{surface6}
\frac{\partial s}{\partial t} +\frac{\partial}{\partial x}f(s,\nu)=\frac{\partial^2 s}{\partial x^2},
\end{equation}
where the flux function $f$ and the viscosity ratio $\nu$ are
\begin{equation}\label{surface7}
f(s,\nu)  = \frac{s^2}{s^2+\nu (1-s)^2}, ~~~\nu = \frac{\mu_{w}+\mu_{o}}{\mu_g}. 
\end{equation}
The Rankine-Hugoniot condition \eqref{Si9} for a shock with speed $\sigma$ between $M$ and $N$ reduces to
\begin{equation}\label{RankHugParameter}
-\sigma(s_N -s_M) -f(s_N,\nu)-f(s_M,\nu)=0.
\end{equation}

\begin{remark}\label{re:paramS}
	The relationships \eqref{surface6}, \eqref{surface7} are valid over any invariant line $[G,D]$, $[O,B]$, and $[W,E]$, which are identified by the vertex where it begins, $\Gamma \in \{G,W,O\}$. Sometimes it is useful to express the final point of the invariant line, $\mathbb{B}\in\{D,E,B\}$, see Fig.~\ref{fig:sattrian1}. The choice of vertex specifies a relation between phase saturations: $\gamma \in \{g,w,o\}$ corresponds to the phase at the vertex $\Gamma$ and $\alpha$, $\beta$ represent the two interchangeable phases. Then, the quantity $s$, the viscosity ratio, and the effective viscosity are, respectively,  
	\begin{equation}\label{paramS}
	s = s_{\alpha}+s_{\beta},~~\nu_\Gamma =\mu_{\alpha\beta}/\mu_{\gamma},~~ \mbox { with }~~ \mu_{\alpha\beta} = \mu_{\alpha}+\mu_{\beta}.
	\end{equation}
To return to the original variables of the saturation triangle, we use
 	\begin{equation}\label{paramS2}
	s_{\alpha} = s\, \mu_{\alpha}/\mu_{\alpha\beta},~~ s_{\beta} = s\, \mu_{\beta}/\mu_{\alpha\beta}.
	\end{equation}
 With this parametrization, the vertices $G, W$, and $O$ always have value $s$ equal to zero, and states $D, E$, and $B$ on the opposite edge have value $s$ equal to one. 
\end{remark}
\begin{remark}
	We introduce further notation for states over any invariant line in ``effective" quantity. For example, let $M = (M_w, M_o)$ be a state in the saturation triangle belonging to $[G,D]$. Then, we write $s_M \in (0,1)$ to indicate that the state is specified by one effective saturation parameter.  
\end{remark}
Finally, we provide an identity for effective shocks. Given a speed and a left (or right) state in an invariant line, it returns the right (or left) state of an admissible shock. Let $M$ and $N$ be two states on an invariant line. Then substituting \eqref{surface7} into \eqref{RankHugParameter} we obtain
\begin{equation}\label{identity0}
\frac{\sigma}{\nu_\Gamma}\left[s_M^2+\nu_\Gamma (1-s_M)^2\right]\left[s_N^2+\nu_{\Gamma} (1-s_N)^2\right] = s_N+s_M-2\,s_N s_M.
\end{equation}
\subsection{Analytic expressions for certain states over invariant lines}
Considering that over the invariant lines, the flux function reduces to \eqref{surface7}, we derive a few expressions for states over these lines that help characterize the Riemann solutions for states in the saturation triangle. First, we fix the parameters $\mu_w, \mu_o $ and $\mu_g$, choose an invariant line by a specific $\Gamma$  and compute the value for $\nu_\Gamma$ using \eqref{paramS}. We write $f(s,\nu_\Gamma)=f(s)$ to represent the effective flux on this line. Then we calculate the characteristic speeds $\lambda_a$ and $\lambda_b$ for states over the invariant line. Hence the expressions:    
\begin{equation}\label{eigenoverbif}
\lambda_a(s)=\frac{2 s}{\mu_{\alpha \beta}D(s)}, ~
\lambda_b(s)=\frac{2 s (1-s)}{\mu_{\alpha \beta} \mu_\gamma D(s)^2},~
\mbox{   with   }~
D(s)=\frac{ s^2}{\mu_{\alpha \beta}}+\frac{ (1-s)^2}{\mu_\gamma}.
\end{equation}

\begin{remark}\label{rem:charspeed}
	It is clear that $\lambda_b$ can be obtained as $f'(s)$, the derivative of flux function \eqref{surface7}. Moreover, if the reduced state $s$ lies between the vertex and the umbilic point $~\um$, we have $\lambda_a(s)<\lambda_b(s)$. Conversely, if the state $s$ lies between the umbilic point and the edge opposite to the vertex, then $\lambda_b(s)<\lambda_a(s)$. We have   
	\begin{equation}\label{eq:charadef}
	\lambda_s(s)=\min \{\lambda_a(s),\lambda_b(s)\} ~~\mbox{ and }~~\lambda_f(s)=\max \{\lambda_a(s),\lambda_b(s)\},
	\end{equation}
	the slow and fast characteristic speeds for a state $s$ along any invariant line (Fig.~\ref{fig:autovsateffec}).
\end{remark}
\begin{figure}[ht]
	\begin{center}
		\centering
		\includegraphics[width=0.5\textwidth]{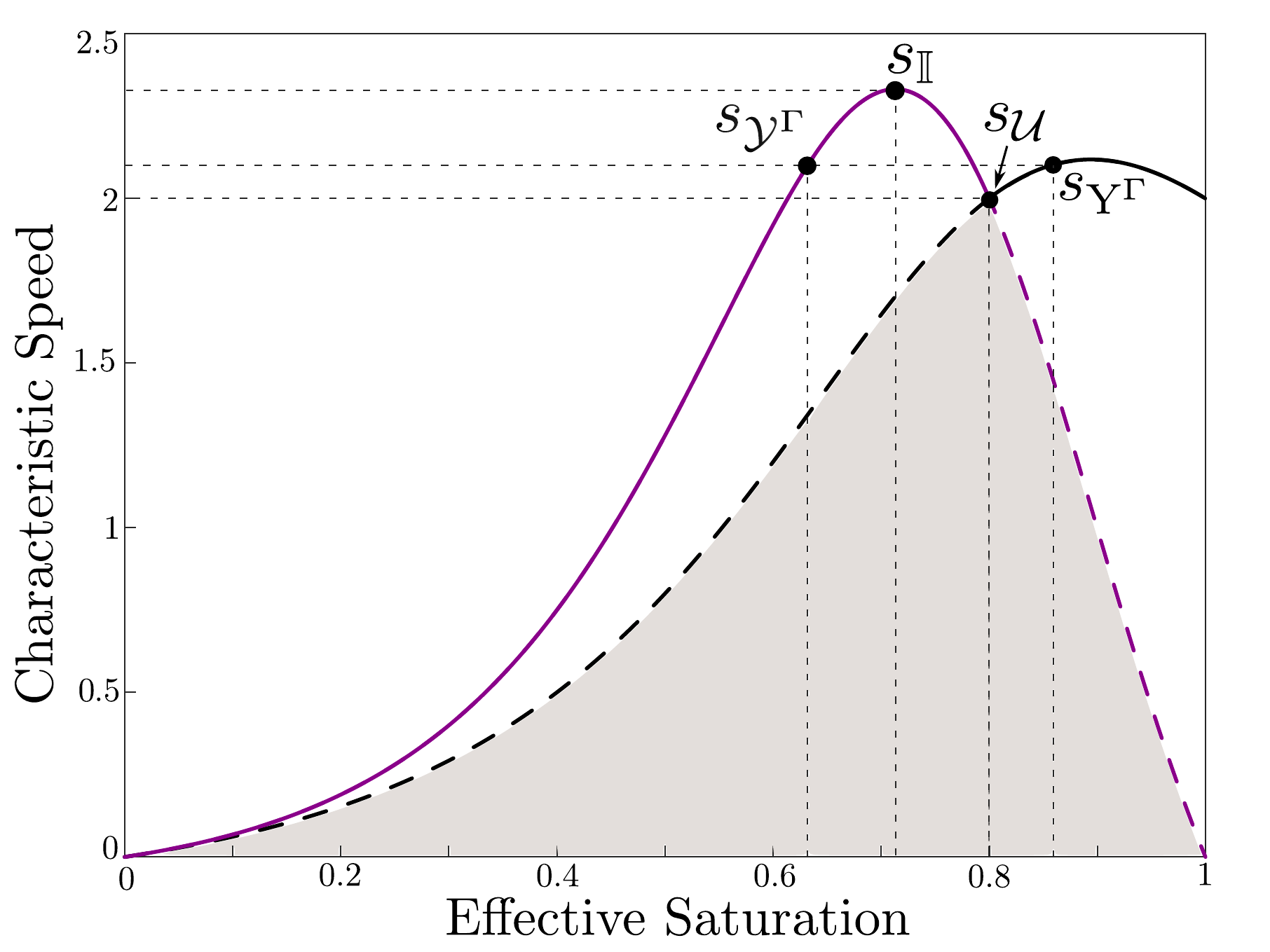}
		\caption{Graphs for expressions $\lambda_a$ (black curve) and $\lambda_b$ (purple curve) above the white region. The dashed (resp. solid) line above the shaded region is the slow (resp. fast) characteristic speed $\lambda_s$ (resp. $\lambda_f$) defined in \eqref{eq:charadef}. The horizontal axis corresponds to a parametrization of an invariant line in terms of effective saturation $s$, and the vertical axis is characteristic speed. The states $s_\um$, $s_{\mathcal{Y}^{\Gamma}}$ and $s_{\text{Y}^{\Gamma}}$ are given in \eqref{eq:umbilic}, \eqref{eq:DoubleContactExp}(a), and (b). The definition of $s_\mathbb{I}$ is given in Remark \ref{rem:inflection}.}
		\label{fig:autovsateffec}
	\end{center}
\end{figure}

From the definition of the umbilic point, we have $\lambda_a(s_\mathcal{U})=\lambda_b(s_\mathcal{U})$. Then from the expressions \eqref{eigenoverbif} we obtain
\begin{equation}\label{eq:umbilic}
s_{\mathcal{U}}=\nu_\Gamma/(1+\nu_\Gamma).
\end{equation}

\begin{remark}\label{rem:DoubleContact}
	It is possible to derive the expressions for two distinguished states over each invariant line. These states are denoted by $\mathcal{Y}^{\Gamma}$ and $\text{Y}^{\Gamma}$ ($\Gamma \in \{G,W,O\}$) and belong to the fast double contact locus. We assume that the umbilic point lies between $\mathcal{Y}^{\Gamma}$ and $\text{Y}^{\Gamma}$ ($s_{\mathcal{Y}^{\Gamma}}\in(0,s_\mathcal{U})$, $s_{\text{Y}^{\Gamma}}\in(s_\mathcal{U},1)$), then
	\begin{equation} \label{Doubledef}
	\text{Y}^{\Gamma}\in \mathcal{H}(\mathcal{Y}^{\Gamma})~\mbox{  and  }~\sigma(\mathcal{Y}^{\Gamma};\text{Y}^{\Gamma}) = \lambda_f(\mathcal{Y}^{\Gamma}) = \lambda_f(\text{Y}^{\Gamma}),
	\end{equation}
	the value $\sigma(\mathcal{Y}^{\Gamma};\text{Y}^{\Gamma})$ is the speed of the shock between $\mathcal{Y}^{\Gamma}$ and $\text{Y}^{\Gamma}$. From the expressions \eqref{eigenoverbif} and definitions \eqref{eq:charadef} and \eqref{Doubledef} we have
	\begin{equation}\label{eqDoubleC}
	\lambda_b(s_{\mathcal{Y}^{\Gamma}}) =\sigma(s_{\mathcal{Y}^{\Gamma}};s_{\text{Y}^{\Gamma}})=\lambda_a(s_{\text{Y}^{\Gamma}}) .
	\end{equation}

From the second and the last expression of \eqref{eqDoubleC} we obtain
\begin{equation}\label{eq:douiblecon1}
	s_{\text{Y}^{\Gamma}}=\frac{s_{\mathcal{Y}^{\Gamma}}\nu_\Gamma}{2 s_{\mathcal{Y}^{\Gamma}}^2 (\nu_\Gamma+1)-2\nu_\Gamma s_{\mathcal{Y}^{\Gamma}}+\nu_\Gamma}.
 \end{equation}
 Notice that the denominator of \eqref{eq:douiblecon1} is nonzero, and that by substituting \eqref{eq:douiblecon1} in the first and second equation of \eqref{eqDoubleC} we obtain	\begin{equation}\label{eqDoubleC3}
	2\left(s_{\mathcal{Y}^{\Gamma}}(\nu_\Gamma+1)-\nu_\Gamma\right)\left(2s^2_{\mathcal{Y}^{\Gamma}}(\nu_\Gamma +1)-\nu_\Gamma\right)s^2_{\mathcal{Y}^{\Gamma}}=0.
	\end{equation}
 Because we seek a solution different from the umbilic point, we arrive at the expressions in increasing order 
	\begin{equation}\label{eq:DoubleContactExp}
	s_{\mathcal{Y}^{\Gamma}} = \frac{1}{2}\sqrt{\frac{2 \nu_\Gamma}{\nu_\Gamma  + 1}},~~~~~
	s_{\text{Y}^{\Gamma}} =\frac{\nu_\Gamma +\sqrt{2\nu_\Gamma(\nu_\Gamma+1)}}{2(\nu_\Gamma+2)}.
	\end{equation}
\end{remark}

Now, consider the intersection point between the invariant line and the triangle edge transversal to that line. The value of $s$ at this point is one (see Remark \ref{re:paramS}). Let us compute $\mathbb{B}_1$ and $\mathbb{B}_2$, the slow and fast left-extensions of this intersection point, respectively. These states lie on the invariant line $[\Gamma , \mathbb{B}]$ with $\Gamma \in\{G,W,O\} $, $\mathbb{B}\in\{D,E,B\}$, and satisfy
\begin{eqnarray}\label{Ext1}
\lambda_s(s_{\mathbb{B}_1})=\sigma(s_{\mathbb{B}_1};1),\\ \label{Ext2}
\lambda_f(s_{\mathbb{B}_2})=\sigma(s_{\mathbb{B}_2};1).
\end{eqnarray}
Substituting equations \eqref{Ext1} and \eqref{Ext2} in \eqref{identity0}, we have the expressions in increasing order 
\begin{equation}\label{eq:extenD}
s_{\mathbb{B}_2}=\frac{\sqrt{\nu_\Gamma+1}-1}{\sqrt{\nu_\Gamma+1}}, ~~~~~s_{\mathbb{B}_1}=\frac{\nu_\Gamma}{2+\nu_\Gamma}.
\end{equation}
Finally, we define the extension of the umbilic point, {\it i.e.}, the state $\mathbb{B}_0$ on the invariant line $[\Gamma,\mathbb{B}] $ such that
\begin{equation}\label{eq:D0}
\lambda_{\mathcal{U}}:=\lambda_s(s_{\mathcal{U}})=\lambda_f(s_\mathcal{U})=\sigma(s_{\mathbb{B}_0};s_\mathcal{U}).
\end{equation}
Substituting \eqref{eq:umbilic} into \eqref{eigenoverbif} we obtain $\lambda_{\mathcal{U}}=2.$ Then using \eqref{identity0} and \eqref{eq:D0} we have
\begin{equation}\label{eq:D01}
(2 s_{\mathbb{B}_0}-1)(s_{\mathbb{B}_0}\nu_\Gamma+s_{\mathbb{B}_0}-\nu_\Gamma)=0,~~~~\mbox{  therefore   }~~~~ s_{\mathbb{B}_0} = 1/2.
\end{equation}
Figure \ref{fig:limitesBoundaries} shows the localization of states $D_0$, $D_1$ and $D_2$ along of $[G,D].$
\begin{figure}
		\begin{center}
			\includegraphics[width=0.42\textwidth]{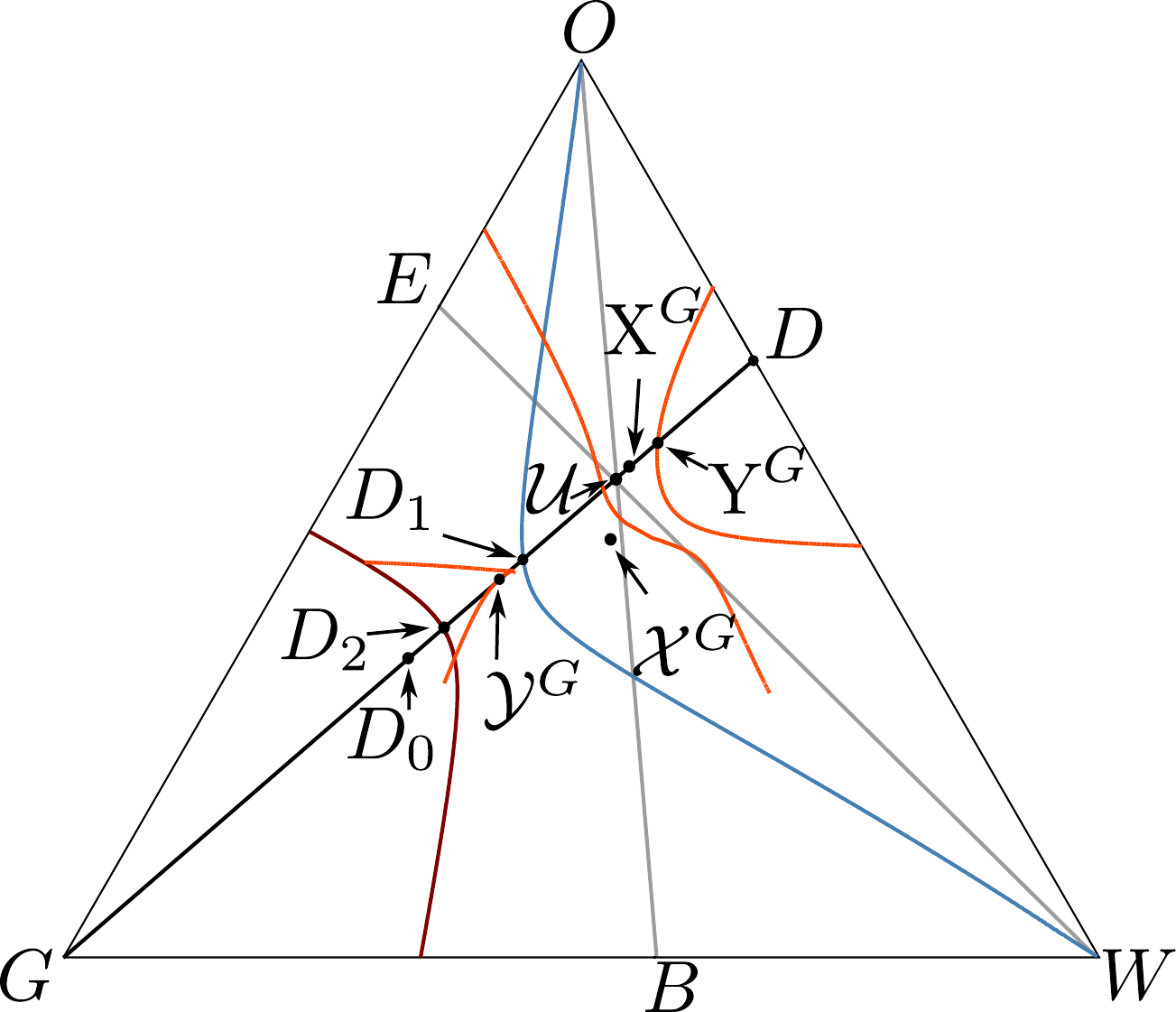}
		\end{center}
		\caption{Bifurcation curves in state space that contain the states $D_2=\mathbb{B}_2$ and $D_1=\mathbb{B}_1$  defined respectively in \eqref{eq:extenD}(a) and (b) for ${[G,D]}$; the states  $\mathcal{Y}^{G}$ and $\text{Y}^{G}$ defined respectively in \eqref{eq:DoubleContactExp} (a) and (b); and the pair of states of mixed double contact $\mathcal{X}^{G}$ and $\text{X}^{G}$ with $\text{X}^{G}\in[G,D]$. The blue (resp. red) curve is the left slow (fast) boundary extension of edge $[O, W]$. The orange curves are the double fast contact, and the gray ones are the invariant lines $[G, D]$, $[O, B]$, and $[W, E]$.}
		\label{fig:limitesBoundaries}
	\end{figure}

\begin{remark}\label{rem:D0E0B0}
	We denoted $D_0$, $E_0$, and $B_0$ to the extensions of the umbilic point that satisfy \eqref{eq:D0} along the invariant lines $[G,D]$, $[W,E]$ and $[O,B]$ respectively. From \eqref{eq:D01}, we have $s_{D_0}=1/2$, $s_{E_0}=1/2$ and $s_{B_0}=1/2$.   
\end{remark}
\subsection{Dependence of distinguished two-phase states on fluid viscosities}\label{section:dependence}
In this section, we study the location of certain important points, such as the umbilic point and the fast double and mixed contact points, when we vary the parameters $\mu_w,\mu_o$, and $\mu_g$. Moreover, it is possible to characterize the position of contact points in terms of the position of the umbilic point in the saturation triangle. All of these states lie on an invariant line, so it suffices to explain the variation of these states along a single invariant line; the other cases are similar. 

We recall the classification of the umbilic point given in \cite{Schaeffer1987} as type $I$ or $II$. In \cite{Asakura} and \cite{Marchesin2014}, the position of the umbilic point was characterized by a general family of models, of which the Corey model is a particular case.
\begin{definition}\label{def:traingleI}
	For the Corey Quad model, we define the triangle $\Omega_\mathcal{U}$ with vertices
	$(0,1/2),(1/2,0)$, and $(1/2,1/2)$, see shaded triangle in Fig.~\ref{fig:DivisionViscosidadesA2}(a).
\end{definition}

\begin{theorem}[\cite{Marchesin2014,Schaeffer1987}]\label{Thm:MatostypeI/II}
	For the Corey Quad model, the umbilic point is classified as:
	\begin{enumerate}
		\item type $I$ if ~~$\mathcal{U}$ lies inside $\Omega_\mathcal{U}$;
		\item type $II$ if ~~$\mathcal{U}$ lies outside $\Omega_\mathcal{U}$;
		\item border-type  $I/II$ if ~ $\mathcal{U}$ lies on the edges of $\Omega_\mathcal{U}$.
	\end{enumerate}
\end{theorem}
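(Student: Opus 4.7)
The approach I would take is to follow the Schaeffer–Shearer normal form classification directly, specialized to the Corey quadratic flux. First, I would verify that $\mathcal{U}=(\mu_w,\mu_o,\mu_g)/(\mu_w+\mu_o+\mu_g)$ is indeed the umbilic point by a direct computation showing $DF(\mathcal{U}) = \lambda_\mathcal{U} I$; with the quadratic relative permeabilities, the mobilities $\lambda_i(U)$ and their derivatives at $\mathcal{U}$ combine in such a way that both partial derivatives $\partial f_w/\partial s_w$ and $\partial f_o/\partial s_o$ reduce to the same value while off-diagonals vanish (this is the calculation already implicit in \eqref{eq:umbilic} restricted to each invariant line $[G,D],[W,E],[O,B]$).

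Next, I would shift coordinates so that $\mathcal{U}$ is the origin and expand $F$ to second order. Schaeffer–Shearer show that, after a linear change of variables bringing the quadratic part of $F$ to a canonical form, the umbilic type is encoded in two real invariants $(a,b)$ computed from the Hessian of $F$, and the $(a,b)$–plane is partitioned by explicit algebraic curves into the four types $I,II,III,IV$. The main work is to compute these invariants explicitly for the Corey quad model. Because the quadratic permeabilities yield a very symmetric Hessian at $\mathcal{U}$, the resulting $(a,b)$ depend rationally on $(\mu_w,\mu_o,\mu_g)$, and one should verify that $(a,b)$ always lies in the strip of the $(a,b)$–plane where only types $I$ and $II$ can occur (so types $III$ and $IV$ are excluded a priori by the Corey form, matching the trichotomy in the statement).

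The key step is then to identify the boundary curve between types $I$ and $II$ in the $(a,b)$–plane with the edges of $\Omega_\mathcal{U}$. By Schaeffer–Shearer, this boundary corresponds to the vanishing of a specific discriminant of the cubic form associated with $d^2F(\mathcal{U})$. I would compute this discriminant directly in the $(\mu_w,\mu_o,\mu_g)$ parameters and show that it factors as
\begin{equation}
(\mu_w-\mu_o-\mu_g)(\mu_o-\mu_w-\mu_g)(\mu_g-\mu_w-\mu_o)=0,
\end{equation}
which, after normalizing by $\mu_w+\mu_o+\mu_g$, is exactly the union of the three lines $s_w=1/2$, $s_o=1/2$, $s_g=1/2$ at $\mathcal{U}$. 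These three lines are precisely the edges of $\Omega_\mathcal{U}$ (from $(0,1/2)$ to $(1/2,1/2)$, from $(1/2,0)$ to $(1/2,1/2)$, and from $(0,1/2)$ to $(1/2,0)$, respectively). The interior $\mathcal{U}\in\Omega_\mathcal{U}$ corresponds to no viscosity exceeding the sum of the other two (type $I$), the exterior corresponds to one viscosity dominating (type $II$), and the edges give the borderline case.

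The main obstacle is the algebraic bookkeeping: carrying the canonical reduction through without losing track of signs and scalings, and matching the Schaeffer–Shearer discriminant against the clean factorization above. A useful simplification is to exploit the $S_3$–symmetry of the model under permutations of $(w,o,g)$, which guarantees that the discriminant must be symmetric in $(\mu_w,\mu_o,\mu_g)$ and vanish on the full orbit of any one of the three lines, reducing the verification to checking the boundary on a single invariant line (say $[G,D]$) and then invoking symmetry—this reduces the problem to the one-dimensional analysis already carried out in Section \ref{section:reduced}.
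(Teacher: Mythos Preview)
The paper does not actually prove this theorem: it is stated with citation to \cite{Marchesin2014,Schaeffer1987} and used as a black box, with no proof environment following it. So there is no ``paper's own proof'' to compare against.

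That said, your outline is the right one and is essentially the argument carried out in the cited references. The Schaeffer--Shearer normal form reduction at the umbilic point, the computation of the $(a,b)$ invariants from the Hessian of $F$, and the identification of the type $I$/type $II$ boundary with the factorization $(\mu_w-\mu_o-\mu_g)(\mu_o-\mu_w-\mu_g)(\mu_g-\mu_w-\mu_o)=0$ are exactly how the result is obtained in \cite{Marchesin2014}. Your observation that the three factors correspond to $s_w=1/2$, $s_o=1/2$, $s_g=1/2$ at $\mathcal{U}$, which are precisely the three edges of $\Omega_\mathcal{U}$, is correct. The $S_3$ symmetry shortcut you mention is also the natural way to reduce the bookkeeping. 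The one place to be careful is the ``strip'' claim excluding types $III$ and $IV$: this does hold for the Corey quadratic model, but it requires checking that the $(a,b)$ invariants land in the correct half-plane, which in \cite{Schaeffer1987} is tied to the gradient structure of the flux (the Corey quadratic model has a cubic potential), so you should invoke that rather than leave it as an unverified assertion.
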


\begin{figure}
	\centering
	\subfigure[Regions where $\mathcal{U}$ is type $I$ and $II$.]{\includegraphics[width=0.3125\textwidth]{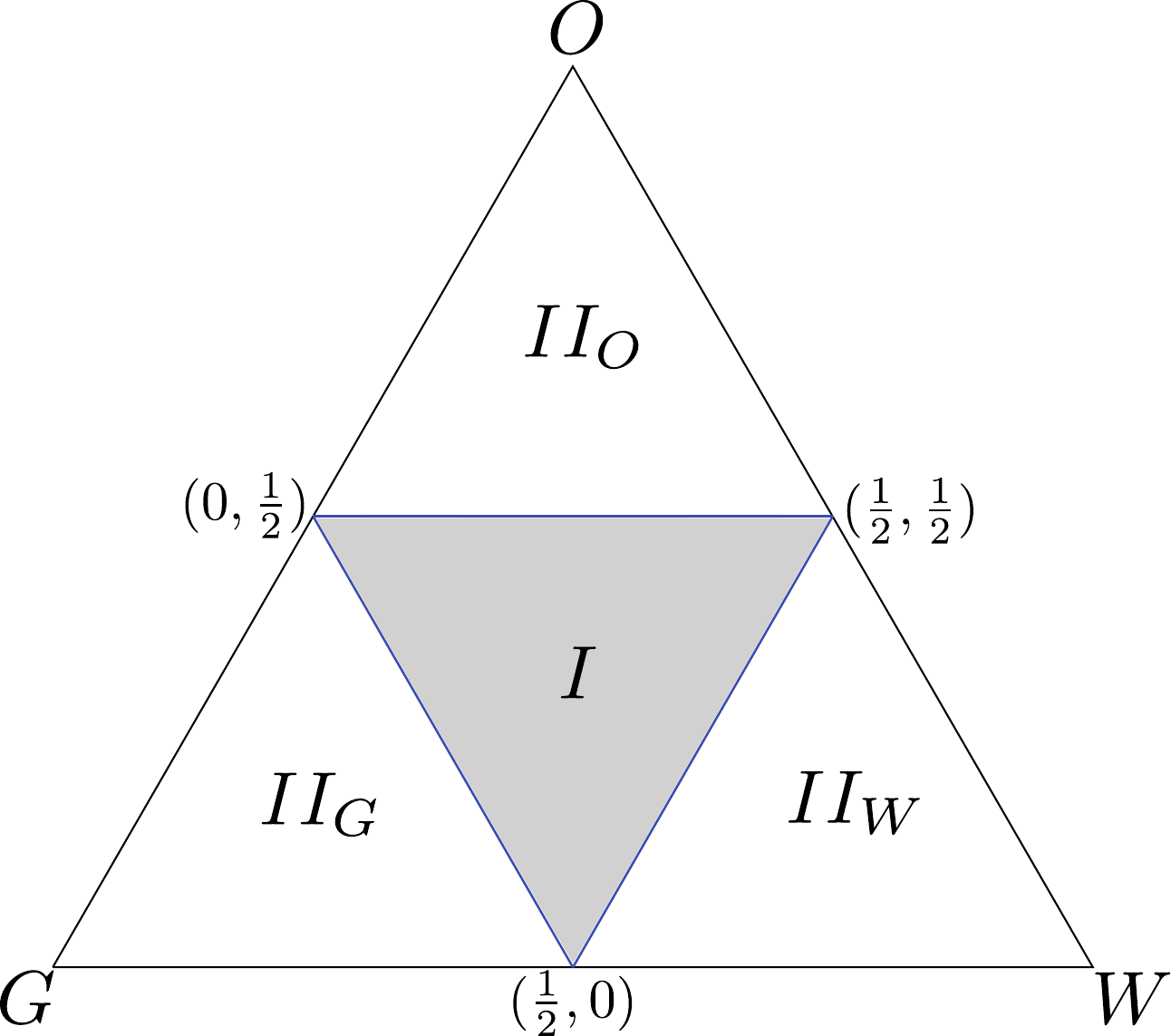}}  \hspace{1.2mm}
	\subfigure[Region for $\um$ makes $\text{Y}^{G}$ relevant for the invariant lines associated with vertex $G$, see Remark \ref{rem:UmY2}. ]{\includegraphics[width=0.3125\textwidth]{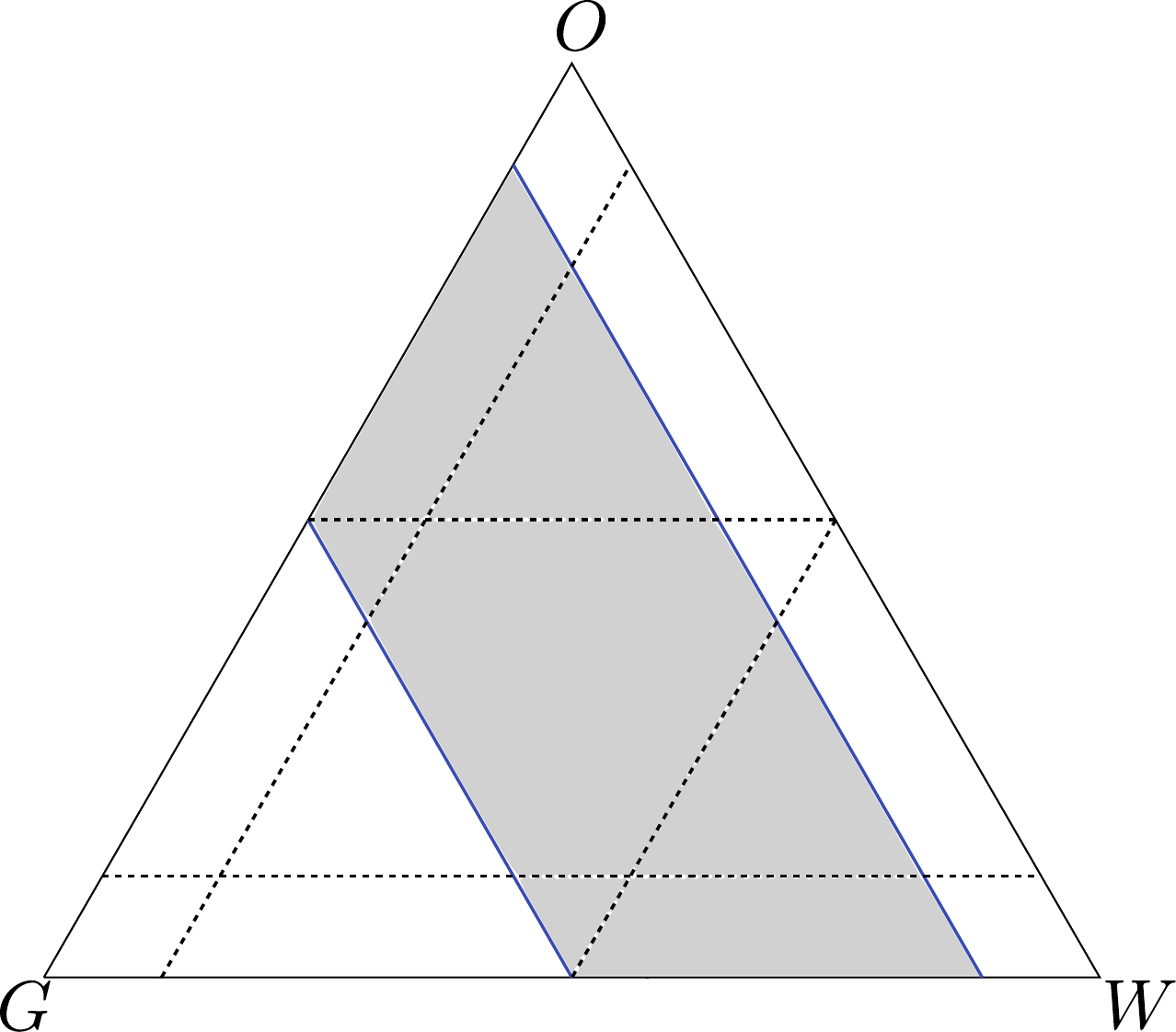}}  
 \hspace{1.2mm}
    \subfigure[All 31 regions for $\um$ determined by different conditions on $\nu_W$, $\nu_O$ and $\nu_G$.]{\includegraphics[width=0.3125\textwidth]{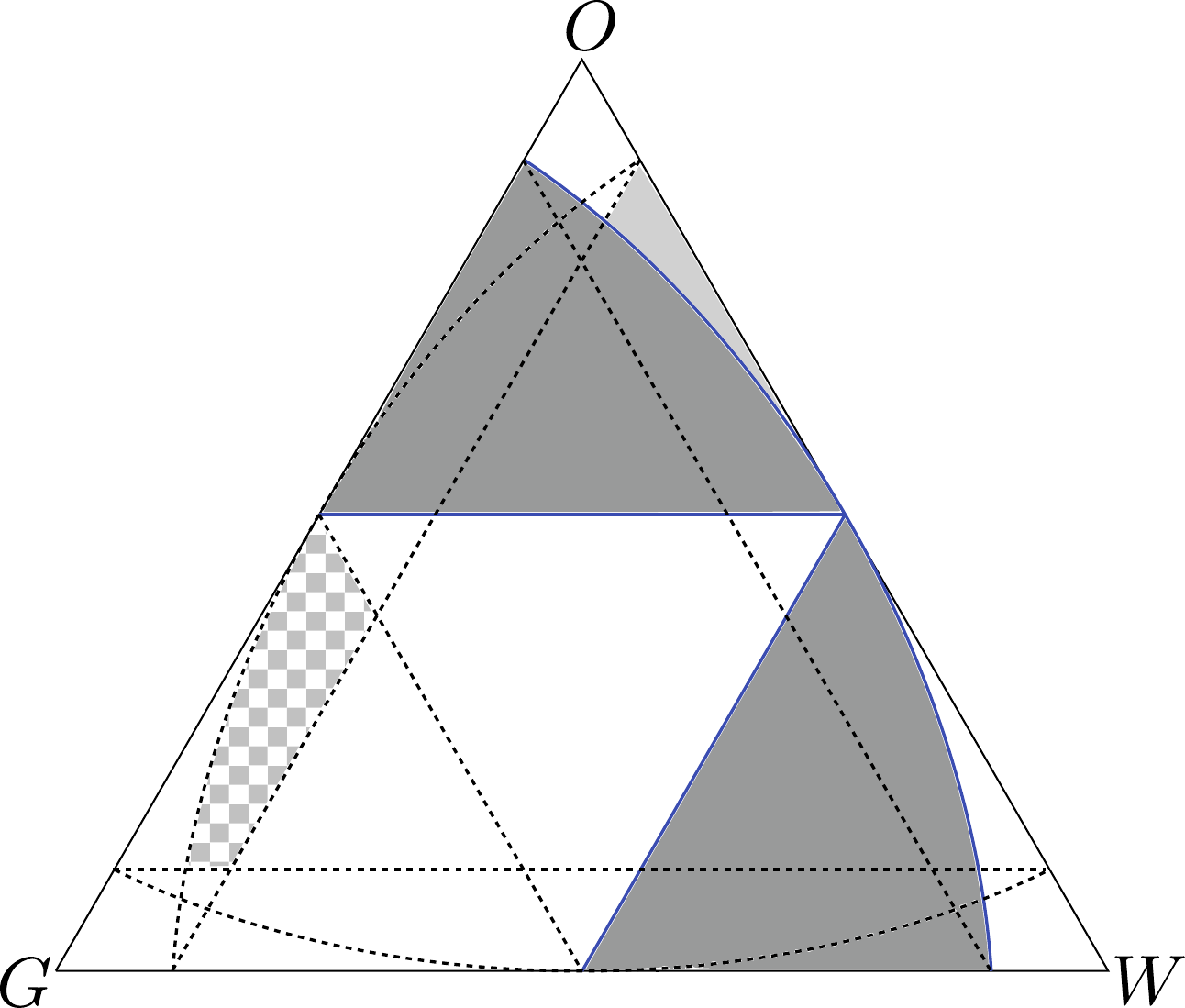}}
	\caption{Position of the umbilic point $\mathcal{U}$. (a) Classification of $\mathcal{U}$ as type $I$ or $II$ given in Corollary \ref{cor:NuUmbilic}. (b) Region associated with vertex $G$ where the location of  $~\mathcal{U}$ imply the existence of the reduced states $s_{\mathcal{Y}^{G}}$ and $s_{\text{Y}^{G}}$. (c) Region for $\um$ in which $\text{X}^{G}$ is relevant for the invariant line associated with vertex $G$ (solid dark gray).  }
	\label{fig:DivisionViscosidadesA2}
\end{figure}
The following result characterizes the viscosity ratios in terms of the location of the umbilic point, and it follows from Theorem \ref{Thm:MatostypeI/II}.
\begin{corollary}\label{cor:NuUmbilic} 
	For the Corey Quad model, if the umbilic point $\mathcal{U}$ is type II, then two of the ratios $\nu_{O}$, $\nu_{G}$ and $\nu_{W}$ are greater than one while the other is less than one. Conversely, if the umbilic point $\mathcal{U}$ is type I, all ratios $\nu_{O}$, $\nu_{G}$ and $\nu_{W}$ defined in \eqref{paramS}(b), are greater than one. To be more precise:
	\begin{enumerate}
		\item $\mathcal{U}\in II_O~~ \Leftrightarrow
		~~\nu_{O}<1$ , $\nu_{G}>1$ and $\nu_{W}>1;$
		\item $\mathcal{U}\in II_W~~ \Leftrightarrow ~~\nu_{W}<1$ , $\nu_{G}>1$ and $\nu_{O}>1;$
		\item $\mathcal{U}\in II_G~~ \Leftrightarrow ~~\nu_{G}<1$ , $\nu_{O}>1$ and $\nu_{W}>1;$
		\item $\mathcal{U}\in \Omega_\mathcal{U}~~ \Leftrightarrow~~\nu_{O}$ , $\nu_{G}$ and ~$\nu_{W}$ are greater than one.
	\end{enumerate}
\end{corollary}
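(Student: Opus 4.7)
The plan is to translate the classification of the umbilic point $\mathcal{U}$ as type $I$ or $II_\Gamma$, guaranteed by Theorem~\ref{Thm:MatostypeI/II}, into algebraic conditions on the viscosity ratios $\nu_W,\nu_O,\nu_G$. Since Theorem~\ref{Thm:MatostypeI/II} already reduces the classification to the question of which region of the saturation triangle $\mathcal{U}$ occupies, the whole argument comes down to rewriting each half--plane condition on the barycentric coordinates of $\mathcal{U}$ in terms of the $\nu_\Gamma$'s, using \eqref{eq:CoordUm} and \eqref{paramS}.

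First, I would describe $\Omega_\mathcal{U}$ explicitly. Its vertices $(0,1/2)$, $(1/2,0)$, $(1/2,1/2)$ are the midpoints of the three edges of the saturation triangle, so $\overline{\Omega_\mathcal{U}}$ is the medial triangle and coincides with the set cut out by the three inequalities $s_w\le 1/2$, $s_o\le 1/2$, $s_g\le 1/2$ (with $s_g=1-s_w-s_o$). The complement of $\overline{\Omega_\mathcal{U}}$ in the saturation triangle then splits into three open corner sub--triangles, labeled $II_W$, $II_O$, $II_G$ and characterized by the strict inequalities $s_w>1/2$, $s_o>1/2$, $s_g>1/2$, respectively; these conditions are pairwise incompatible because $s_w+s_o+s_g=1$, so $\mathcal{U}$ belongs to at most one corner.

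The main step is to insert the coordinates $s_\gamma(\mathcal{U})=\mu_\gamma/(\mu_w+\mu_o+\mu_g)$, read off from \eqref{eq:CoordUm}, into these half--plane conditions. A direct manipulation gives
\begin{equation*}
s_w(\mathcal{U})>\tfrac{1}{2}\;\Leftrightarrow\;\mu_w>\mu_o+\mu_g\;\Leftrightarrow\;\nu_W<1,
\end{equation*}
where the second equivalence uses $\nu_W=(\mu_o+\mu_g)/\mu_w$ from \eqref{paramS}; the analogous equivalences for $\nu_O$ and $\nu_G$ follow by cyclic symmetry, as do the reverse statements $s_\gamma(\mathcal{U})<1/2\Leftrightarrow \nu_\Gamma>1$.

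Assembling these translations yields the four items of the corollary. For example, $\mathcal{U}\in II_O$ is equivalent to $s_o(\mathcal{U})>1/2$ by the previous paragraph, which in turn is equivalent to $\nu_O<1$; mutual exclusivity then forces $s_w(\mathcal{U})<1/2$ and $s_g(\mathcal{U})<1/2$, i.e.\ $\nu_W>1$ and $\nu_G>1$, establishing item~(1). Items~(2) and~(3) are proved identically by permuting the three vertices, and item~(4) follows from $\mathcal{U}\in\Omega_\mathcal{U}\Leftrightarrow s_w,s_o,s_g<1/2$. I do not anticipate any substantive obstacle: the entire argument is a dictionary between two equivalent descriptions of the same three half--planes, and each equivalence reduces to a linear inequality among the fluid viscosities.
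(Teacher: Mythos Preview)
Your proposal is correct and is essentially the approach the paper has in mind: the paper gives no proof for this corollary beyond the remark that it ``follows from Theorem~\ref{Thm:MatostypeI/II},'' and what you have written is precisely the straightforward translation of the geometric criterion in that theorem into inequalities on the $\nu_\Gamma$ via \eqref{eq:CoordUm} and \eqref{paramS}. Your identification of $\Omega_\mathcal{U}$ as the medial triangle and of the three $II_\Gamma$ corners as the half-spaces $s_\gamma>1/2$ is exactly the intended reading, and the key computation $s_\gamma(\mathcal{U})>1/2\Leftrightarrow\nu_\Gamma<1$ is correct.
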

\begin{remark}\label{rem:inflection}
	Notice that the derivative of the effective flux function \eqref{surface7} with $\nu=\nu_G$ along $[G, D]$ is continuous and has a maximum at $s_\mathbb{I}$. This state represents the inflection point of the effective flux function. The meaning of $s_\mathbb{I}$ over any invariant line depends on its position concerning the umbilic point. If $s_\mathbb{I}$ is to the left of $s_{\mathcal{U}}$, it is a fast inflection point. Otherwise, it is a slow inflection point (see equations \eqref{eigenoverbif} and Fig.~\ref{fig:autovsateffec}). Let us study the inflection point $s_\mathbb{I}$. Differentiating the function $f(s)$ twice and setting it equal to zero, we obtain
	\begin{equation}\label{eq:InflectionS}
2{s_\mathbb{I}}^3-3{s_\mathbb{I}}^2+s_{\mathcal{U}}=0.
	\end{equation}
	The discriminant of the cubic equation \eqref{eq:InflectionS} is $ \lozenge=108 s_\um (1-s_\um)$. Then $\lozenge \geq 0 $ if and only if $0\leq s_\um\leq 1.$  Therefore,  when $s_\um = 0$ or $s_\um = 1$ there is only the trivial solution $s_\mathbb{I} = s_\um$ for $s_\mathbb{I}\in[0,1]$. Now, let $P_{s_\um}(s_\mathbb{I})= 2{s_\mathbb{I}}^3-3{s_\mathbb{I}}^2+s_{\mathcal{U}}$ be the function associated with \eqref{eq:InflectionS}. Notice that $P_{s_\um}(0)=s_\um\geq 0$ and $P_{s_\um}(1)=s_\um -1 \leq 0$  for all $s_\um\in[0,1]$. Additionally $P'_{s_\um}(s_\mathbb{I})=6s_\mathbb{I}(s_\mathbb{I}-1)\leq 0$ for all $s_\mathbb{I}\in[0,1] $ and $s_\um\in[0,1]$. Therefore, there is only one solution of \eqref{eq:InflectionS} in $[0,1]$ for all $s_\um\in[0,1]$. From the implicit relation between $s_\um$ and $s_\mathbb{I}$ given by \eqref{eq:InflectionS} we conclude that 
	\begin{eqnarray} \label{eq:InflectionS1}
	\mbox{if  }&\nu_{G}\leq 1 ~~ \Leftrightarrow ~~ s_\mathcal{U}=\frac{\nu_{G}}{\nu_{G} + 1}\leq \frac{1}{2} ~~ \Leftrightarrow ~~ s_\um\leq s_\mathbb{I} \leq \frac{1}{2}; \\  \label{eq:InflectionS2}
	\mbox{if  }& \nu_{G} > 1 ~~ \Leftrightarrow ~~ s_\mathcal{U}=\frac{\nu_{G}}{\nu_{G} + 1}> \frac{1}{2} ~~ \Leftrightarrow ~~\frac{1}{2} < s_\mathbb{I} < s_\um.
	\end{eqnarray}
\end{remark}

The following result, stated in \cite{Andrade2017}, shows the relationship between the viscosity ratio $\nu_G$ and the existence of states $\mathcal{Y}^{G}$ and $\text{Y}^{G}$ along $[G, D]$ of fast double contact.  
\begin{lemma}\label{lemm:DoubleContact}
	Consider the invariant line $[G, D]$. There are two states of fast double contact over $[G, D]$ if and only if $1 <  \nu_{G} \leq 8$.
\end{lemma}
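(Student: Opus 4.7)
The plan is to exploit the closed-form expressions \eqref{eq:DoubleContactExp} derived in Remark \ref{rem:DoubleContact} and translate the claim into two algebraic inequalities on $\nu_G$. By the ordering convention preceding \eqref{eq:DoubleContactExp}, a genuine fast double contact pair on $[G,D]$ consists of two distinct states with $s_{\mathcal{Y}^G} \in (0,s_\mathcal{U})$ and $s_{\text{Y}^G} \in (s_\mathcal{U},1]$; the lemma therefore reduces to determining the values of $\nu_G > 0$ for which the candidate values in \eqref{eq:DoubleContactExp} lie in these intervals.

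First I would handle the bound $s_{\mathcal{Y}^G} < s_\mathcal{U}$. Using $s_\mathcal{U}=\nu_G/(\nu_G+1)$ from \eqref{eq:umbilic} and $s_{\mathcal{Y}^G}=\tfrac{1}{2}\sqrt{2\nu_G/(\nu_G+1)}$, squaring (both sides are positive) reduces this strict inequality to $2\nu_G/(\nu_G+1)>1$, equivalently $\nu_G > 1$. At $\nu_G = 1$ the candidate equals $s_\mathcal{U}=1/2$, so the pair degenerates onto the umbilic point; for $\nu_G < 1$ the formula places $\mathcal{Y}^G$ on the wrong side of $\mathcal{U}$, so the ordering required by Remark \ref{rem:DoubleContact} fails.

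Second I would handle the bound $s_{\text{Y}^G}\leq 1$. Substituting \eqref{eq:DoubleContactExp}(b), clearing the positive denominator $2(\nu_G+2)$, isolating the square root, and squaring against the positive quantity $\nu_G+4$ transforms the inequality into the polynomial condition
\begin{equation*}
(\nu_G-8)(\nu_G+2)\leq 0,
\end{equation*}
which, given $\nu_G>0$, holds if and only if $\nu_G\leq 8$. At $\nu_G=8$ we obtain $s_{\text{Y}^G}=1$, placing $\text{Y}^G$ at the vertex $D$; for $\nu_G>8$ the candidate exits the saturation triangle.

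Finally I would verify that for $\nu_G>1$ the remaining requirement $s_{\text{Y}^G}>s_\mathcal{U}$ is automatic: after clearing denominators and squaring, the inequality factors as $(\nu_G-1)^2(\nu_G+2)>0$, which is strict whenever $\nu_G > 1$. Combined with $s_{\mathcal{Y}^G}>0$ (immediate from $\nu_G > 0$), the two distinct states $\mathcal{Y}^G,\text{Y}^G$ of fast double contact exist on $[G,D]$ exactly when $1<\nu_G\leq 8$. The only subtle point in the proof is keeping track of the sign of each side before squaring so that the polynomial inequalities are genuinely equivalent to the original ones; beyond this the argument is a routine sequence of polynomial manipulations with no hidden obstacle.
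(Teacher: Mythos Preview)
Your computations are correct and cleaner than the paper's: the factorizations $(\nu_G-8)(\nu_G+2)\leq 0$ and $(\nu_G-1)^2(\nu_G+2)>0$ do exactly what you say, and the ``if'' direction is complete. The gap is in the ``only if'' direction. You write that, by the ordering convention preceding \eqref{eq:DoubleContactExp}, a genuine fast double contact pair must straddle the umbilic point. But Remark~\ref{rem:DoubleContact} \emph{assumes} that ordering in order to derive the formulas; it does not establish it. So when $\nu_G\leq 1$ or $\nu_G>8$ and the candidate pair \eqref{eq:DoubleContactExp} fails to land in $(0,s_{\mathcal U})\times(s_{\mathcal U},1]$, you have only shown that no \emph{straddling} pair exists; you have not excluded a fast double contact pair with both states on the same side of $\mathcal U$, where $\lambda_f$ is given by the same expression ($\lambda_a$ or $\lambda_b$) at both endpoints and the derivation in Remark~\ref{rem:DoubleContact} does not apply.

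The paper closes precisely this hole: it sets up the condition $\lambda_a(s_M)=\sigma(s_M;s_N)=\lambda_a(s_N)$ for two hypothetical states $s_M,s_N\in(s_{\mathcal U},1]$, eliminates to a quadratic in $s_N$, and checks that its discriminant $-4\nu_G$ is negative, so no such pair exists (the case $(0,s_{\mathcal U})$ is symmetric). You should add this short argument, or at least note why same-side pairs are impossible, to make the ``only if'' direction airtight.
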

\begin{remark}\label{rem:UmY2}
	Notice that in terms of the position of the umbilic point in the saturation triangle, the two states of fast double contact over $[G,D]$ exist and are relevant if and only if the umbilic point, $\um=(\um_w,\um_o)$ is in the region bounded by the straight lines $\um_w+\um_o=1/2$ and $\um_w+\um_o=8/9$. (See shaded region in Fig.~\ref{fig:DivisionViscosidadesA2} (b)).
\end{remark}
\begin{proof}[Proof of Lemma \ref{lemm:DoubleContact}]
Let $r = \sqrt{2 \nu_{G}/(\nu_{G}  + 1)}$. We rewrite $s_{\text{Y}^{G}}$ given in \eqref{eq:DoubleContactExp} as $s_{\text{Y}^{G}}= -r/2(r -2).$ Otherwise, for $\nu_{G} >0$, we have $r>0$ and
	\begin{equation}\label{ineqDoubleC1}
	    	r\left(r-1\right)^2>0  \Leftrightarrow r >  r^2 \left(2 - r\right).
	\end{equation}
	As $2-r>0$, from \eqref{ineqDoubleC1}, we obtain $ s_{\text{Y}^{G}} > s_\mathcal{U}$. Notice that  when $\nu_{G}=1$ we have $s_{\mathcal{Y}^{G}}=s_{\text{Y}^{G}}=s_\mathcal{U}=1/2$ by \eqref{eq:DoubleContactExp}. Otherwise, $2\nu_{G}/(\nu_{G}+1)> 1 \Leftrightarrow \nu_{G}>1$. Thus 
	\begin{eqnarray}
	\nu_{G}>1
	\Leftrightarrow \nu_{G}/(\nu_{G}+1) ~>~\frac{1}{2}\sqrt{\frac{2\nu_{G}}{\nu_{G}+1}}.
	\end{eqnarray}
	Therefore, we have that $s_\mathcal{U}>s_{\mathcal{Y}^{G}}$ if $\nu_{G}>1$. On the other hand, consider the case in which $ \nu_{G}< 1$. It implies that $2\nu_{G}/(\nu_{G}+1)<1.$ Thus $\nu_{G}/(\nu_{G}+1) >\frac{1}{2}\sqrt{2\nu_{G}/(\nu_{G}+1)}.$ Therefore $s_\mathcal{U} < s_{\mathcal{Y}^{G}}$. This contradicts our assumption given in Remark \ref{rem:DoubleContact}  that the  umbilic point lies between $ s_{\mathcal{Y}^{G}}$ and $s_{\text{Y}^{G}}$. Thus, we conclude that when $\nu_{G} < 1$, the expressions given in \eqref{eq:extenD} for $s_{\mathcal{Y}^{G}}$ and $s_{\text{Y}^{G}}$ do not represent a pair of states of fast double contact along the invariant line $[G, D]$.
 
	Let us prove that over any invariant line, there cannot be states of fast double contact on the same side of the umbilic point. Assume that there are two states $s_M, s_N $ in $(s_\mathcal{U},1]$, $s_M\neq s_N$, such that $\sigma(s_M;s_N)=\lambda_f(s_M)=\lambda_f(s_N).$ 
	Substituting  the expressions \eqref{eqDoubleC} in \eqref{identity0}, we obtain 
	\begin{multline}\label{eqDoubleC5}
	\displaystyle \frac{2s_{M}({s_{N}}^2+\nu_{G}(1-s_{N})^2)}{\nu_{G}}=s_{M}+s_{N}-2s_{M} s_{N} \\=\displaystyle \frac{2s_{N}\left({s_{M}}^2+\nu_{G}(1-s_{M})^2\right)}{\nu_{G}}.
	\end{multline}
	Equating the left and right expressions of \eqref{eqDoubleC5} we have
	\begin{equation}\label{eqDoubleC6}
	(s_M-s_N)(s_M s_N (\nu_G+1)-\nu_G)=0  ~~ \Leftrightarrow ~~  s_{M}=\displaystyle \frac{\nu_{G}}{s_{N}(\nu_{G}+1)}.
	\end{equation}
	Substituting \eqref{eqDoubleC6}(b) in the equality between the left and center expressions of \eqref{eqDoubleC5} we obtain
	\begin{equation}\label{eqDoubleC7}
	s_N^2\nu_{G}+s_N^2-2s_N\nu_{G}+\nu_{G}=0.
	\end{equation}
	The discriminant of \eqref{eqDoubleC7} is $4\nu_{G}^2-4(1+\nu_{G})\nu_{G}=-4\nu_{G}< 0.$ Therefore we have a contradiction. When considering the case  $s_M , s_N $ in $(0,s_\mathcal{U})$ we obtain another contradiction. Thus we conclude that $\nu_{G}>0$ implies $s_{\mathcal{U}}<s_{\text{Y}^{G}}$. 
	
	Let us assume that $\text{Y}^{G}$ coincides with $D$ ($E \mbox{ or } B $ depending on which invariant line). Then $s_{\text{Y}^{G}} =s_D = 1 $. Substituting in expression \eqref{eq:DoubleContactExp} we obtain
	\begin{equation}
	-\frac{1}{2}\frac{\sqrt{2 \nu_{G}(\nu_{G}  + 1)}}{\sqrt{2 \nu_{G}(\nu_{G}  + 1)} -2(\nu_{G}+1)} = 1 ~~ \Leftrightarrow ~~ -7\nu_{G}^2+48 \nu_{G}+64=0.
	\end{equation}
	As $\nu_{G}\geq 0$, then  $\nu_{G}=8.$
	Now assume that the umbilic point lies inside the shaded region of Fig.~\ref{fig:DivisionViscosidadesA2}(b). This means that
	\begin{equation} \label{Eq:umbilicthm}
	1/2<~\mathcal{U}_w+\mathcal{U}_o\leq8/9 ~~ \Leftrightarrow 	1/2<\frac{\nu_{G}}{\nu_{G}+1}\leq8/9.
	\end{equation}
	Then, the last expression of \eqref{Eq:umbilicthm} implies $1< \nu_{G} \leq 8$. 
\end{proof}

\begin{remark}\label{rem:compsISy2}
	Assume that $\nu_{G}>1.$ Then from \eqref{eq:InflectionS2} we have $\frac{1}{2} < s_\mathbb{I} < s_\um$. Following  Remark \ref{rem:inflection}, notice that by \eqref{eq:InflectionS} and \eqref{eq:DoubleContactExp}(a) $P_{s_\um}(s_{\mathcal{Y}^{G}}) = 2 s_{\mathcal{Y}^{G}}^3-3 s_{\mathcal{Y}^{G}}^2 + s_\um = s_\um/2\left(\sqrt{2 ~s_\um} -1\right) >0$. Therefore we have
	\begin{equation}\label{eq:compsISy2}
	\nu_{G} > 1 ~~ \Leftrightarrow ~~ s_\mathcal{U}=\frac{\nu_{G}}{\nu_{G} + 1}>1/2~~ \Leftrightarrow ~~s_{\mathcal{Y}^{G}}< s_\mathbb{I} < s_\um.
	\end{equation}  
\end{remark}

Let us discuss the mixed double contact points below. We are interested in the state $\text{X}^{G}$, the intersection between the mixed double contact locus and the invariant line $[G,D]$. To see the behavior of the mixed contact locus when the $\muw,$ $\muo$, and $\mug$ vary, see Section 4.4 in \cite{Lozano2018}.

Unlike fast double contact states, $\mathcal{Y}^{G}$ and $\text{Y}^{G}$, the state $\mathcal{X}^{G}$ and its correspondent state $\text{X}^{G}$ are not in the same invariant line $[G,D]$ (see Fig.~\ref{fig:limitesBoundaries}). Moreover, $\mathcal{X}^{G}$ is not part of any invariant line. Because of this reason, we cannot characterize them using the reduced saturation parameter (additionally, the shock between $\mathcal{X}^{G}$ and $\text{X}^{G}$ is not admissible). However, the location of $\text{X}^{G}$, like that of $\text{Y}^{G}$, has an important role in our analysis as they define regions where certain wave curves bifurcate (see \cite{Lozano2018}).

Numerical solid evidence shows that the mixed double contact locus exists only when the umbilic point lies in one of the regions of type II. Even so, from the knowledge that $\text{X}^{G}$ is in $[G, D]$ (for example, when $\text{X}^{G}$ lies on the boundary of the saturation triangle), we can extract meaningful information about conditions for $\text{X}^{G}$ to lie inside of saturation triangle (See Fig.~\ref{fig:HuigoniotbyD}).
\begin{figure}
	\centering 
	\includegraphics[width=0.42\textwidth]{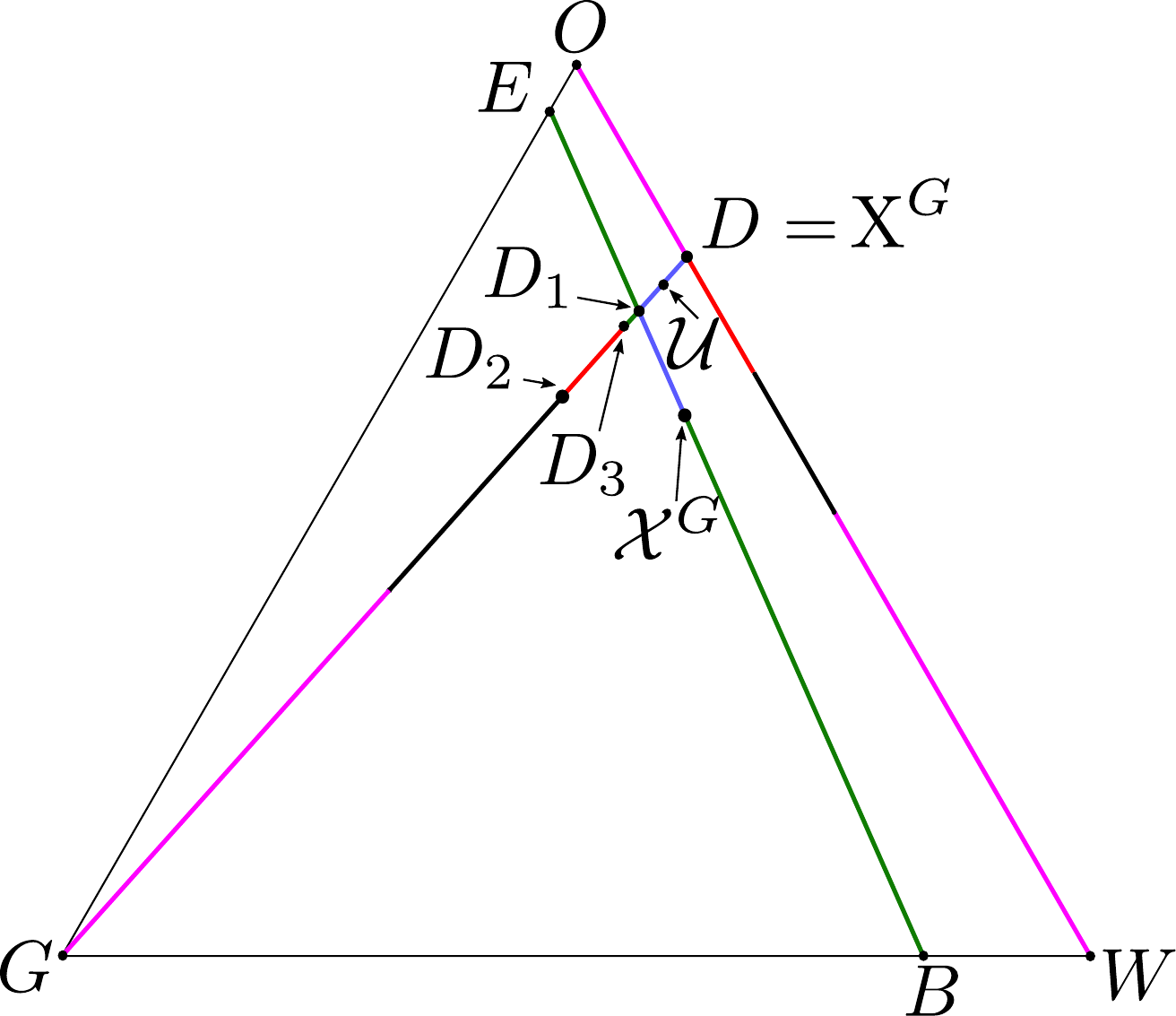}
	\caption{Typical example of a Hugoniot locus from $U^-=D$. In this case $\mu_w,\mu_o$ and $\mu_g$ satisfies $\frac{(\mu_w-\mu_o)^2}{\mu_g(\mu_w+\mu_o)}=8.$ The states $D_2$ and $D_1$ were defined in \eqref{eq:extenD}(a) and (b), respectively; the state $D_3$ satisfy $\sigma(\text{X}^{G};D_3)=\sigma(\text{X}^{G};\mathcal{X}^{G})$, where $\text{X}^{G}$  and $\mathcal{X}^{G}$ are states of mixed double contact.  }
	\label{fig:HuigoniotbyD}
\end{figure}

Now, we present a result that shows the relationship between the viscosity ratio $\nu_G$ and the existence of mixed double contact state $\text{X}^{G}$ along the invariant segment $[G,D]$. This result was first shown in \cite{Andrade2017}. 

\begin{definition}
    Following Remark \ref{re:paramS} we define 
	\begin{equation} 
	\nu^{-}_\Gamma = \mu_{\alpha\beta}^{-}/\mu_{\gamma},~~ \mbox { with }~~ \mu_{\alpha\beta}^{-}=\mu_\alpha - \mu_\beta.
	\end{equation}
\end{definition}
\begin{lemma}\label{lemm:MixedContact}
	Consider the invariant line $[G, D]$ and the umbilic point in  regions $II_O$ or $II_W$. Then the state $\text{X}^{G}$ of mixed double contact over $[G,D]$ lies inside the saturation triangle if and only if $({\nu^-_{G}})^2/\nu_{G} \leq 8$.
\end{lemma}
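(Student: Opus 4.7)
The plan is to exploit the fact that $\text{X}^{G}$ lies on the invariant line $[G,D]$ and thus admits the effective-saturation parametrization of Remark \ref{re:paramS}, while its mixed-contact partner $\mathcal{X}^{G}$ generically lies off the invariant line. Writing $\text{X}^{G}=(s\mu_w/\mu_{wo},\,s\mu_o/\mu_{wo})$ with $s\in(0,1]$ and $\mathcal{X}^{G}=(a,b)$, I would encode the mixed double contact conditions as three equations: the two Rankine–Hugoniot equations \eqref{Si9} connecting $\mathcal{X}^{G}$ to $\text{X}^{G}$ with a common speed $\sigma$, together with $\sigma=\lambda_f(\text{X}^{G})=\lambda_s(\mathcal{X}^{G})$. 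The first eigenvalue is explicit via \eqref{eigenoverbif} because $\text{X}^{G}$ sits on the invariant line, whereas $\lambda_s(\mathcal{X}^{G})$ requires the full Jacobian $DF$ off the line.

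Viewing $s$ as the driving parameter, I would eliminate $a,b$ from this system to obtain an implicit relation $\Phi(s;\nu_G,\nu^-_G)=0$ whose zero set describes $s_{\text{X}^{G}}$. The appearance of the antisymmetric ratio $\nu^-_G=(\mu_w-\mu_o)/\mu_g$ (rather than $\nu_G$ alone) is natural: the invariant line enjoys a $w\leftrightarrow o$ symmetry responsible for the reduction to the scalar equation \eqref{surface6}, and the deviation $\mathcal{X}^{G}-\text{X}^{G}$ off the line is governed precisely by this antisymmetric combination. In particular, at $\mu_w=\mu_o$ (so $\nu^-_G=0$), the Hugoniot locus of any state on $[G,D]$ is itself symmetric under $w\leftrightarrow o$, forcing $\mathcal{X}^{G}$ onto the invariant line and degenerating the mixed contact into a secondary bifurcation point strictly inside the triangle; this handles the base case $0<8\nu_G$.

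To pin down the threshold, I would specialize to the boundary case $s=1$, i.e.\ $\text{X}^{G}=D$. At this state the algebra simplifies dramatically: a direct computation gives $f_w(D)=\mu_w/\mu_{wo}$ and $f_o(D)=\mu_o/\mu_{wo}$ (the flux coincides with the state on the edge $[W,O]$), and \eqref{eigenoverbif} yields $\sigma=\lambda_f(D)=2/\mu_{wo}$. Substituting this $\sigma$ into the Rankine–Hugoniot conditions reduces them to a coupled quadratic system for $(a,b)$; imposing the eigenvalue constraint $\lambda_s(\mathcal{X}^{G})=2/\mu_{wo}$, clearing denominators, and discarding the trivial root $\mathcal{X}^{G}=D$, I expect the system to collapse to the scalar identity $(\mu_w-\mu_o)^2=8\mu_g(\mu_w+\mu_o)$, equivalently $(\nu^-_G)^2/\nu_G=8$, consistent with Fig.~\ref{fig:HuigoniotbyD}.

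The ``if and only if'' would then follow from a continuity/monotonicity argument: on the branch of $\Phi=0$ selected above, $s_{\text{X}^{G}}$ depends continuously on $(\nu^-_G)^2/\nu_G$ (with $\nu_G$ fixed), moving monotonically from an interior value at $\nu^-_G=0$ up to $1$ at the threshold and exiting the triangle beyond. The main obstacle I anticipate is the elimination step: because $\mathcal{X}^{G}$ is off the invariant line, $\lambda_s(\mathcal{X}^{G})$ must be extracted from the characteristic polynomial of the full $2\times 2$ Jacobian, so reducing the three coupled polynomial equations to a single relation is algebraically heavy. A practical workaround is to first solve the Rankine–Hugoniot equations for $(a,b)$ as explicit algebraic functions (substantially simpler at $s=1$ thanks to the flux identity at $D$) and only then impose the eigenvalue condition, turning the threshold computation into the verification of a single polynomial identity.
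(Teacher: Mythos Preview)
Your strategy---specialize to the boundary case $\text{X}^{G}=D$, compute $\lambda_f(D)$, substitute into Rankine--Hugoniot, and extract the threshold---matches the paper's. Two points of comparison: one a correction, one a genuine methodological difference.

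First, $\lambda_f(D)=2$, not $2/\mu_{wo}$: from \eqref{eigenoverbif} at $s=1$ one has $D(1)=1/\mu_{\alpha\beta}$, whence $\lambda_a(1)=2$ and $\lambda_b(1)=0$. This does not alter your plan.

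Second, and more significantly: you anticipate that eliminating the off-line coordinates $(a,b)$ and then imposing $\lambda_s(\mathcal{X}^G)=\sigma$ via the full Jacobian will be algebraically heavy, and that is precisely where the paper's proof diverges from yours. The paper bypasses the elimination entirely by invoking a structural fact (Lemma~4.8 of \cite{Azevedo2014}): the Hugoniot locus of $D$ consists of three straight line segments, $[O,W]$, $[G,D]$, and $[E,B]$. After ruling out the first two branches, the nonlocal branch $[E,B]$ admits the linear parametrization
\[
\mathcal{X}_w^G = M\,\mu_w/(\mu_w+\mu_g),\qquad \mathcal{X}_o^G = (1-M)\,\mu_o/(\mu_o+\mu_g),\qquad M\in[0,1].
\]
Substituting this into (the ratio of) the RH equations with $\sigma=2$ yields a cubic in $M$ that factors as a linear times a quadratic; the linear root is discarded by a shock-speed contradiction, and the quadratic has discriminant $(\mu_w-\mu_o)^2-8\mu_g(\mu_w+\mu_o)$. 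The paper then argues for uniqueness of the root to force the discriminant to vanish, \emph{never} explicitly computing $\lambda_s(\mathcal{X}^G)$ from the full Jacobian. Your direct imposition of $\lambda_s(\mathcal{X}^G)=\sigma$ would in principle give the same threshold but at greater algebraic cost; the line structure of $\mathcal{H}(D)$ is the shortcut you are missing. Both arguments leave the passage from the equality $(\nu_G^-)^2/\nu_G=8$ to the inequality $\leq 8$ to continuity/monotonicity, as you correctly note.
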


\begin{remark}
	Notice that in terms of the position of the umbilic point in the saturation triangle, the state $\text{X}^{\Gamma}$ is inside the saturation triangle if and only if the  umbilic point $\um=(\um_w,\um_o)$ is in the region bounded by the straight lines $\um_w+\um_o=1/2$, $\um_o=1/2$ and the ellipse $9\, \um_w^2+14\, \um_w\,\um_o+9\, \um_o^2-8 \,\um_w-8\,\um_o=0$ (see the solid dark gray region in Fig.~\ref{fig:DivisionViscosidadesA2} (c)).
\end{remark}	

\begin{proof}[Proof of Lemma \ref{lemm:MixedContact}] Assume that $\mathcal{U}$ is in region $II_O$ or $II_W$ and  $\text{X}^{G}$ coincides with $D$. Then we have that	
	\begin{equation}\label{eq:mixedcontact0}
	\text{X}^{G} = D=(D_w,D_o)^T = \left(\frac{\mu_w}{\mu_w+\mu_o},\frac{\mu_o}{\mu_w+\mu_o}\right)^T  \mbox{ and } ~~~s_D=1 = s_{\text{X}^{G}}.
	\end{equation}  
	Using \eqref{eigenoverbif}, we have that $2=\lambda_f(s_D)= \lambda_f(s_{\text{X}^{G}})$. Consider the Hugoniot curve for $D$. From Lemma 4.8 of \cite{Azevedo2014}, the $\mathcal{H}(D)$ consisting the three straight lines $[O,W]$, $[G,D]$ and $[E,B]$, see Fig.~\ref{fig:HuigoniotbyD}. Since for all states $M$ over the branch $[O,W]$, the slow characteristic speed is zero, the state $\mathcal{X}^{G}$ corresponding to $\text{X}^{G}$ is not in this branch.
    Let $\mathcal{X}^{G}=\left(\mathcal{X}_{w}^{G}, \mathcal{X}_{o}^{G}\right)^T$ be the corresponding state to $\text{X}^{G}$; from the definition of the mixed double, contact we have
	\begin{equation}\label{eq:mixedcontact1}
	\sigma(\mathcal{X}^{G};\text{X}^{G})=\lambda_s(\mathcal{X}^{G})=\lambda_f(\text{X}^{G}) = 2.
	\end{equation}
	From the Rankine-Hugoniot condition \eqref{Si9}, we have
	\begin{eqnarray}\label{eq:mixedcontact2}
	f_w(\mathcal{X}_{w}^{G}, \mathcal{X}_{o}^{G})-f_w(D_w,D_o)&=2\left( \mathcal{X}_{w}^{G} - D_w\right),\\ \label{eq:mixedcontact3}
	f_o(\mathcal{X}_{w}^{G}, \mathcal{X}_{o}^{G})-f_o(D_w,D_o)& =2\left( \mathcal{X}_{o}^{G} - D_o\right) ~.
	\end{eqnarray}
	From \eqref{eq:flowfunct}, we have
	\begin{equation}\label{eq:mixedcontact4}
	f_w(D_w,D_o) =\mu_w/(\mu_w+\mu_o) ~~\mbox{ and } ~~ f_o(D_w,D_o) =\mu_o/(\mu_w+\mu_o).
	\end{equation}
	Therefore, by substituting \eqref{eq:mixedcontact4} into \eqref{eq:mixedcontact2} and \eqref{eq:mixedcontact3} and dividing the resulting expressions, we obtain
	\begin{equation}\label{eq:mixedcontact7}
	\frac{\left(\mathcal{X}_{w}^{G}\right)^2}{\mu_w}\left(2\mathcal{X}_{o}^{G}-\frac{\mu_o}{\mu_w+\mu_o}\right)- \frac{\left(\mathcal{X}_{o}^{G}\right)^2}{\mu_o}\left(2\mathcal{X}_{w}^{G}-\frac{\mu_w}{\mu_w+\mu_o}\right) = 0.
	\end{equation}
	On the other hand, taking into account Lemma 4.8 of \cite{Azevedo2014}, where the analytic expression of the Rankine-Hugoniot locus is displayed, the nonlocal branch of $\mathcal{H}(D)$ that joins the states $E$ and $B$ can be parametrized as
	\begin{equation}\label{eq:mixedcontact9}
	\mathcal{X}_{w}^{G}= M \mu_w/(\mu_w+\mu_g),\quad \mathcal{X}_{o}^{G}= (1 - M) \mu_o/(\mu_o+\mu_g), ~~~~~~ M\in[0,1].
	\end{equation}
	Therefore, substituting \eqref{eq:mixedcontact9} into \eqref{eq:mixedcontact7} we obtain
	\begin{multline}\label{eq:mixedcontact11}
	    	\mu_w\mu_o\left[(2\mu_g+\mu_o+\mu_w) M-\mu_g-\mu_w\right]\\ \left[2(\mu_w +\mu_o) M^2-(3\mu_w+\mu_o) M+\mu_w+\mu_g\right]=0. 
	\end{multline}

	Then 
	\begin{eqnarray}\label{eq:mixedcontact12}
	(2\mu_g+\mu_o+\mu_w) M-\mu_g-\mu_w=0 ~~~~\mbox{or}\\\label{eq:mixedcontact13}
	2(\mu_w +\mu_o) M^2-(3\mu_w+\mu_o) M+\mu_w+\mu_g=0.
	\end{eqnarray}
	Notice that if \eqref{eq:mixedcontact12} is true, then $\mathcal{X}_{w}^{G}=\mu_w/(2\,\mu_g+\mu_o+\mu_w)$ and $\mathcal{X}_{o}^{G}=\mu_o/(2\,\mu_g+\mu_o+\mu_w)$. Notice that $\mathcal{X}_{w}^{G}/\mu_w = \mathcal{X}_{o}^{G}/\mu_o$, thus the state $\mathcal{X}^{G}$ would be on the invariant line $[G,D]$. Then $s_{\mathcal{X}^{G}} = (\mu_w+\mu_o) /(2\,\mu_g+\mu_o+\mu_w) = \nu_{G}/(2+\nu_{G})$. But $\sigma(s_{\mathcal{X}^{G}};s_{\text{X}^{G}}) = 2(2+\nu_{G})/(\nu_{G}+4) \neq 2 , ~~ \forall ~~\nu_{G}\in (0,1).$ This contradicts \eqref{eq:mixedcontact1}. Therefore \eqref{eq:mixedcontact13} holds. 
 
	Let us analyze the discriminant of \eqref{eq:mixedcontact13}. Notice that
	\begin{equation}
	\triangle = (\mu_w-\mu_o)^2 -8\mu_g(\mu_w+\mu_o).
	\end{equation}
	Given that we seek a unique solution of \eqref{eq:mixedcontact13}, we need that $\triangle = 0$. Therefore  $\text{X}^{G} = D$ if and only if 
	\begin{equation}\label{eq:mixedcontact15}
	(\mu_w-\mu_o)^2 /\mu_g(\mu_w+\mu_o)=({\nu_{G}^-})^2/\nu_{G}=8. 
	\end{equation}
\end{proof}
Figure \ref{fig:DivisionViscosidadesA2}(c) shows two regions for $\um$ where choosing the distinguish invariant line implies that it satisfies different properties. For example, in the plaid light gray region, we have that $~\um$ is type $II_G$, $\nu_W>8,~\nu_G<1,$  $1<\nu_O<8$, ${(\nu_W^-)}^2/\nu_W<8$ and ${(\nu_O^-)}^2/\nu_O<8$. However, in the solid light gray region, $~\um$ is type  $II_O$, $\nu_G>8,~\nu_O<1,$  $1<\nu_W<8$, ${(\nu_G^-)}^2/\nu_G>8$ and ${(\nu_W^-)}^2/\nu_W<8$.

\begin{remark}
	Lemmas \ref{lemm:DoubleContact} and \ref{lemm:MixedContact} still hold if we replace the invariant line $[G, D]$ by $[W, E]$ or $[O, B]$ using \eqref{paramS}. Overall there are 31 different regions where the umbilic point could be; they are shown in Fig.~\ref{fig:DivisionViscosidadesA2}(c). 
\end{remark}
\section{Non classical waves in the reduced two-phase flow model}\label{section:TransMap}
{\it Undercompressive shock} and {\it transitional rarefaction waves} were introduced in \cite{L.1990,L.1992a} to solve Riemann problems for non-strictly hyperbolic systems of conservation laws. Undercompressive shock waves for a generic viscosity matrix $\mathcal{B}$ are characterized by the undercompressive map $T:\mathcal{D}_{\mathcal{T}}\xrightarrow{}\mathcal{D}'_{\mathcal{T}}$  given in \eqref{eq:tranMap}, such that an admissible crossing discontinuity connects $U$ and $U'$ with shock speed $\sigma=\sigma(U;U')$. However, when $\mathcal{B} = I $, the maps $\pi^-$ and $\pi^+$, restricted to $\mathcal{T}$, are not invertible, so the undercompressive shock waves are not characterized by a map.  Instead, ${\mathcal{T}}$ projects into state space as line segments on two sides of $~\um$. Regarding $\left(U^-,\sigma\right)$- and $\left(U^+,\sigma\right)$-coordinates, $\mathcal{T}$ is a planar wedge-shape figure; see Fig.~\ref{fig:SuperficeTraninicio0} and Section \ref{section:TransSurface}.

For $\mathcal{B} = I $, we take advantage of the formulation of the reduced model (Section \ref{section:reduced}) and study, in an explicit way, the part of the undercompressive surface of shocks for which the left and right states lie on that particular invariant line. That is, for each state $ M $ on the segment $ [\um, \mathbb{B}],$ $\mathbb{B}\in\{E,D,B\}$, there are states $ \mathfrak{S}, \mathcal{F} $  along the segment $[\Gamma,\um]$ with $\Gamma\in\{G,W,O\}$ such that any state $ U $ between $ [\mathcal{F}, \mathfrak{S}] $ has an admissible undercompressive shock to $M$. 
This procedure is analogous to choosing a state $ U $ along $[\Gamma, \um] $ and calculating the interval defined by $\mathfrak{S}$ and $\mathcal{F}$ along $[\um,\mathbb{B}]$; see Fig.~\ref{fig:SuperficeTraninicio0} and Section \ref{section:TransSurface}.

The states $\mathfrak{S}$ and $\mathcal{F}$ are characteristic shocks of the suitable families. For this reason, we use expressions \eqref{identity0} and \eqref{eigenoverbif}(a)-(b) for reduced states to find explicitly the end states of the undercompressive interval associated with $ M$.

Notice that when $ M = \mathbb{B} $ we have straightforward end points $\mathfrak{S} =\mathbb{B}_1$ and $\mathcal{F} = \mathbb{B}_2$ for the undercompressive interval. They corresponding effective saturations $s_{\mathbb{B}_2}$ and $s_{\mathbb{B}_1}$ are given in  \eqref{eq:extenD}.

\begin{theorem}\label{thm:x1x2}
	Let $M$ be a state along the secondary bifurcation line associated with the vertex $\Gamma \in \{G,W,O\}$ on the invariant segment $(\um,\mathbb{B}]$, $\mathbb{B}\in\{D,E,B\}$  with $1<\nu_\Gamma \leq 8$. Then, there are two states $\mathfrak{S}$, $\mathcal{F}$  along the segment $[\Gamma,\um)$ such that for any state $U$, with $s_{U}\in(s_{\mathcal{F}},s_{\mathfrak{S}})$, we have an undercompressive shock between $U$ and $M$ with undercompressive speed $\sigma(U;M)$. The state $\mathfrak{S}$ satisfies
	\begin{equation}\label{eq:x1x2}
	\sigma(\mathfrak{S};M)=\lambda_s(\mathfrak{S}).
	\end{equation}
	For the state $\mathcal{F}$ we have two cases depending on $\text{Y}^{\Gamma}$ (Remark \ref{rem:DoubleContact}): 
	\begin{enumerate}
		\item[a)] If $s_{M}\in(s_\um,s_{\text{Y}^{\Gamma}}]$
		\begin{equation}\label{eq:x1x2A}
		\sigma(\mathcal{F};M)=\lambda_f(M).
		\end{equation}
		\item[b)] If $s_{M}\in(s_{\text{Y}^{\Gamma}},1]$
		\begin{equation}\label{eq:x1x2B}
		\sigma(\mathcal{F};M)=\lambda_f(\mathcal{F}).
		\end{equation}
	\end{enumerate}
\end{theorem}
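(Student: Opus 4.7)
My plan is to work entirely inside the scalar Buckley-Leverett reduction of Section \ref{section:reduced}. On the invariant line, every shock speed is governed by the chord identity \eqref{identity0}, and the planar traveling wave ODE \eqref{V9} with $\mathcal{B}=I$ leaves the invariant line pointwise fixed, so any connecting orbit decouples into a one-dimensional flow along the line, $ds/d\xi = f(s)-f(s_M)-\sigma(s-s_M)$, plus an independent transverse eigenvalue. Because $\nu_\Gamma > 1$, Remark \ref{rem:charspeed} gives $\lambda_s=\lambda_a$ and $\lambda_f=\lambda_b=f'$ on $[\Gamma,\um)$, with the reverse ordering on $(\um,\mathbb{B}]$. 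Definition \ref{def:Laxshocks} then translates the crossing property into the four inequalities
\begin{equation*}
\lambda_a(s_U) \;<\; \sigma \;<\; f'(s_U), \qquad f'(s_M) \;<\; \sigma \;<\; \lambda_a(s_M),
\end{equation*}
and the endpoints $\mathfrak{S}$ and $\mathcal{F}$ of the admissible set for $s_U$ must coincide with states where one of these becomes an equality.

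To locate $\mathfrak{S}$, I would use the fact that the left transverse inequality $\lambda_a(s_U)<\sigma$ is the boundary closest to $s_\um$: indeed $\lambda_a(s_U)\to 2$ as $s_U\to s_\um$, while the chord slope $\sigma(s_U;s_M)$ stays strictly below $2$ throughout the undercompressive regime, so failure of this inequality is driven by $\lambda_a$ rising to meet $\sigma$. Setting $\sigma(s_\mathfrak{S};s_M)=\lambda_a(s_\mathfrak{S})$ and substituting \eqref{eigenoverbif}(a) into \eqref{identity0} yields a polynomial in $s_\mathfrak{S}$ whose monotonicity supplies a unique root in $[\Gamma,\um)$; this root realizes a left-characteristic $s$-shock in the sense of Definition \ref{def:charequilibium}, proving \eqref{eq:x1x2}.

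The lower endpoint $\mathcal{F}$ requires a case split because two different inequalities compete to break first. If $s_M\in(s_\um,s_{\text{Y}^{\Gamma}}]$, then the binding constraint is the right transverse inequality $\sigma<\lambda_a(s_M)$; imposing $\sigma(s_\mathcal{F};s_M)=\lambda_a(s_M)$ via \eqref{identity0} produces \eqref{eq:x1x2A} and a right-characteristic $f$-shock at $M$. If $s_M\in(s_{\text{Y}^{\Gamma}},1]$, the upper Buckley-Leverett bound $\sigma<f'(s_U)$ fails first, and imposing $\sigma(s_\mathcal{F};s_M)=f'(s_\mathcal{F})$ produces \eqref{eq:x1x2B} and a left-characteristic $u$-shock at $\mathcal{F}$. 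The two prescriptions agree at $s_M=s_{\text{Y}^{\Gamma}}$, because the fast double-contact identity \eqref{eqDoubleC} gives $\lambda_a(\text{Y}^{\Gamma})=\sigma(\mathcal{Y}^{\Gamma};\text{Y}^{\Gamma})=f'(\mathcal{Y}^{\Gamma})$, forcing $\mathcal{F}=\mathcal{Y}^{\Gamma}$ from both sides; this continuity glues the two regimes into a single wedge of undercompressive shocks.

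Finally, for any interior $s_U\in(s_\mathcal{F},s_\mathfrak{S})$ I would verify that all four crossing inequalities hold strictly, so that both $U$ and $M$ are saddle equilibria of the planar field $F(\cdot)-F(U)-\sigma(\cdot-U)$: at $M$ the line-tangent eigenvalue $f'(s_M)-\sigma$ is negative and the transverse eigenvalue $\lambda_a(s_M)-\sigma$ positive, with the signs reversed at $U$. The viscous profile then exists because on the invariant line the reduced field $v(s)=f(s)-f(s_M)-\sigma(s-s_M)$ satisfies $v(s_U)=v(s_M)=0$ and, by the single-inflection S-shape of $f$ in \eqref{surface7} (Remark \ref{rem:inflection}) together with the Lax signs $v'(s_U)>0>v'(s_M)$, cannot vanish in between, producing a heteroclinic orbit from $U$ to $M$. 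The main obstacle I anticipate is the case-split bookkeeping in the third paragraph: verifying rigorously that the inequality claimed to become active in each regime is in fact the first to fail as $s_U$ moves across $[\Gamma,\um)$, so that the partition at $s_{\text{Y}^{\Gamma}}$ is both exhaustive and consistent.
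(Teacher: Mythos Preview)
Your plan follows the same strategy as the paper's proof: reduce to the scalar Buckley--Leverett flux on the invariant line, recast the crossing condition via \eqref{eigenoverbif}--\eqref{eq:charadef} as the four inequalities you list, and determine the endpoints $\mathfrak{S}$, $\mathcal{F}$ as the states where one of these becomes an equality. The paper, however, executes this computationally rather than qualitatively: for $\mathfrak{S}$ it substitutes \eqref{eq:x1x2} into \eqref{identity0} to obtain the explicit rational formula \eqref{eq:x1x20} and checks algebraically that it lies in $[s_{\mathbb{B}_1},s_\um)$; for case~a) it derives a quadratic \eqref{eq:x1x22A} in $s_\mathcal{F}$ and shows its discriminant \eqref{eq:x1x23} is nonnegative precisely when $s_M\le s_{\text{Y}^\Gamma}$ (this is how the paper \emph{discovers} the threshold, rather than by comparing constraints); for case~b) it solves the quadratic \eqref{eq:x1x25A} and selects the root by matching $s_M=1\mapsto s_{\mathbb{B}_2}$ and $s_M=s_{\text{Y}^\Gamma}\mapsto s_{\mathcal{Y}^\Gamma}$. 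Your ``which inequality fails first'' viewpoint is a legitimate alternative and arguably more transparent, but the monotonicity claims you would need (e.g.\ that $\lambda_a$ is increasing on $[\Gamma,\um)$ and that $\sigma(s_U;s_M)$ is decreasing in $s_U$ there) are exactly the bookkeeping you flag as the main obstacle; the paper sidesteps this by computing the roots directly and checking ranges. Your final paragraph, verifying that interior $s_U$ give genuine saddle--saddle profiles along the line with no intermediate equilibrium, is a point the paper's own proof leaves implicit, so it is a welcome addition.
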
 

\begin{proof} We first compute $\mathfrak{S}$. Substituting \eqref{eq:x1x2} in \eqref{eq:charadef}(a) and using \eqref{identity0} we obtain 
	\begin{equation}\label{eq:x1x20}
	s_{\mathfrak{S}} = \frac{\nu_\Gamma s_{M}}{2(1+\nu_\Gamma)s_{M}^2+ \nu_\Gamma(1-2s_{M})}. 
	\end{equation}
	Notice that the denominator of \eqref{eq:x1x20} is nonzero and for $M$ in the edge  opposite to the vertex $\Gamma$ ($s_{M}=1$ ), $s_{\mathfrak{S}} =\nu_\Gamma / (2+\nu_\Gamma)=s_{\mathbb{B}_1}$. Given that $\nu_\Gamma > 1$ then $s_{M}>s_\um >1/2$ then $1-2s_{M}<0$. Thus
	\begin{eqnarray}\label{eq:x1x21}
	s_{M}(1-2s_{M})<s_\um (1-2s_{M})& \Leftrightarrow s_{M}<2s_{M}^2+s_\um (1-2s_{M}),\\ \label{eq:x1x21A}
	& \displaystyle\Leftrightarrow \frac{s_{M}s_\um}{2s_{M}^2+s_\um (1-2s_{M})}<s_\um.
	\end{eqnarray}
	Hence, substituting \eqref{eq:umbilic} in \eqref{eq:x1x21A} we obtain
	\begin{equation}
	    s_{\mathfrak{S}}=\displaystyle \frac{s_{M}\nu_\Gamma}{2(1+\nu_\Gamma)s_{M}^2+\nu_\Gamma (1-2s_{M})}<s_\um.
	\end{equation}
	Therefore, we conclude that for $s_{M}\in(s_\um,1]$,  $s_{\mathbb{B}_1}\leq s_{\mathfrak{S}} <s_{\um}$. Now, from Lemma \ref{lemm:DoubleContact} there are $\mathcal{Y}^{\Gamma}$ and $\text{Y}^{\Gamma}$ along the invariant line defined by vertex $\Gamma$, with $s_{\mathcal{Y}^{\Gamma}}\in(0,s_\um]$ and $s_{\text{Y}^{\Gamma}}\in[s_\um,1]$. Notice that these points naturally define a bifurcation over their corresponding segments because they are the only points along the invariant line that satisfy \eqref{Doubledef}. Next, we study the cases for $\mathcal{F}$: 
	\begin{enumerate}
		\item[a)] Case $s_{M}\in(s_\mathcal{U},s_{\text{Y}^{\Gamma}}]$. We seek a state $\mathcal{F}$ that satisfies \eqref{eq:x1x2A}. Then from \eqref{identity0} and \eqref{eq:charadef}(b) we obtain the quadratic equation
		\begin{equation}\label{eq:x1x22A} 
		A s^2_{\mathcal{F}}+Bs_{\mathcal{F}}+C=0,
		\end{equation}
		where
		\begin{equation}\label{eq:x1x22} 
		A = 2 s_{M} (1+\nu_\Gamma),~~~~~B =-(2s_{M}+1)\nu_\Gamma,~~~~~\mbox{ and }~~~~C= \nu_\Gamma s_{M}.
		\end{equation}
		Notice that the discriminant of quadratic equation \eqref{eq:x1x22A} is 
		\begin{equation}\label{eq:x1x23}
		B^2-4AC  = -\nu_\Gamma\left[(4\nu_\Gamma+8)s_{M}^2-4\nu_\Gamma s_{M}-\nu_\Gamma\right].
		\end{equation}
		Then,
		\begin{multline}\label{eq:x1x24}
		B^2-4AC\geq 0 \displaystyle\Leftrightarrow \frac{\nu_\Gamma -\sqrt{2\nu_\Gamma(\nu_\Gamma+1)}}{2(\nu_\Gamma+2)} < \frac{\nu_\Gamma}{\nu_\Gamma + 1}\\<s_{M}\leq \frac{\nu_\Gamma +\sqrt{2\nu_\Gamma(\nu_\Gamma+1)}}{2(\nu_\Gamma+2)}=s_{\text{Y}^{\Gamma}}. 
		\end{multline} 
		Moreover, when $M = \text{Y}^{\Gamma}$, we have $B^2-4AC = 0$. Then,
		\begin{equation}
		\displaystyle s_{\mathcal{F}} = \frac{(2 s_{\text{Y}^{\Gamma}} +1)\nu_\Gamma}{4 s_{\text{Y}^{\Gamma}}(1+\nu_\Gamma)}=\frac{1}{2}\sqrt{\frac{2\nu_\Gamma }{\nu_\Gamma+1}}=s_{\mathcal{Y}^{\Gamma}}. 
		\end{equation}

        Therefore,  $s_{\mathcal{F}} \in [s_{\mathcal{Y}^{\Gamma}},s_\um)$. Otherwise, since $s_{M}<s_{\text{Y}^{\Gamma}}$, $s_{\mathcal{F}}$ is chosen such that it satisfies the largest root of the quadratic equation  \eqref{eq:x1x22A}, as the other root implies $s_{\mathcal{F}}<s_{\mathcal{Y}^{\Gamma}}$, which contradicts the given assumptions.
        
		\item[b)] Case $s_{M}\in(s_{\text{Y}^{\Gamma}},1].$ We seek a state $\mathcal{F}$ that satisfies \eqref{eq:x1x2B}. From \eqref{identity0} and \eqref{eq:charadef}(b), we obtain the quadratic equation 
		\begin{equation}\label{eq:x1x25A}
		A s^2_{\mathcal{F}}+Bs_{\mathcal{F}}+C=0,
		\end{equation}
		where 
		\begin{equation}\label{eq:x1x25}
		A = \left(2 s_{M}-1\right) \left(\nu_\Gamma +1\right),~~~B = -2s_{M}(\nu_\Gamma+1)~~\mbox{ and }~~
		C= \nu_\Gamma.
		\end{equation}
		Notice that the discriminant of equation \eqref{eq:x1x25A} is
		\begin{equation}\label{eq:discX2}
		B^2-4AC = (\nG+1)\left[(\nG+1)s_{M}^2-2\nu_\Gamma s_{M}+\nu_\Gamma\right],
		\end{equation}
		which is positive for all $s_{M}\in(s_{\text{Y}^{\Gamma}},1]$. Additionally, if we consider $s_{M} = 1,$ we obtain $s_{\mathcal{F}} = (\sqrt{\nu_\Gamma+1}\pm 1)(\sqrt{\nG+1})$. Notice that the largest of these two values for $s_{\mathcal{F}}$ is greater than one, which is invalid. Then for $s_{M} = 1 $, we obtain $s_{\mathcal{F}} = (\sqrt{\nu_\Gamma+1}- 1)/(\sqrt{\nG+1})$ which coincides with $s_{D_2}$, see equation \eqref{eq:extenD}(a). On the other hand, as the pair of states $s_{\text{Y}^{\Gamma}}$ and $s_{\mathcal{Y}^{\Gamma}}$ also satisfies \eqref{eq:x1x2B}, the solution of quadratic equation \eqref{eq:x1x25A} for $s_{M}=s_{\text{Y}^{\Gamma}}$ is $s_{\mathcal{F}}=s_{\mathcal{Y}^{\Gamma}}$, hence $s_{\mathcal{F}} \in [s_{\mathbb{B}_2},s_{\mathcal{Y}^{\Gamma}})$. We conclude that $s_{\mathcal{F}}$ is the lowest solution of the quadratic equation defined by the coefficients \eqref{eq:x1x25}, because if we choose the other one $s_{\mathcal{F}}>1$ which contradicts the given assumptions. 
	\end{enumerate}
\end{proof}

\begin{corollary}\label{cor:x1x2r8}
	For $\nu_\Gamma >8 ~~(\mbox{or}~~ {(\nG^-)}^2/\nG>8)$ in the context of Theorem \ref{thm:x1x2}, the state $\mathfrak{S}$ satisfies \eqref{eq:x1x2}. The case a) holds for $\mathcal{F}$ with $s_{M}\in(s_\um,1]$ and $s_{\mathcal{F}}\in[s_{\mathbb{B}_2^*},s_\um)$, where $\mathbb{B}_2^*$ satisfies $\sigma(\mathbb{B}_2^*;\mathbb{B})=\lambda_f(\mathbb{B})$, for $\mathbb{B}\in\{D,E,B\}$. 
\end{corollary}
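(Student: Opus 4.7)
The plan is to retrace the proof of Theorem \ref{thm:x1x2} in the extended range $\nu_\Gamma>8$ and locate the step where the dichotomy between cases a) and b) collapses. From \eqref{eq:DoubleContactExp}(b) one checks directly that $s_{\text{Y}^{\Gamma}}=1$ exactly when $\nu_\Gamma=8$, and $s_{\text{Y}^{\Gamma}}>1$ whenever $\nu_\Gamma>8$: the fast double contact state has exited the invariant segment, consistent with Lemma \ref{lemm:DoubleContact}. Consequently, for every $s_M\in(s_\um,1]$ one has $s_M<s_{\text{Y}^{\Gamma}}$, so only case a) of Theorem \ref{thm:x1x2} remains in force. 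The parallel reduction under the alternative hypothesis $(\nu_\Gamma^-)^2/\nu_\Gamma>8$ uses Lemma \ref{lemm:MixedContact}, which pushes the mixed contact point $\text{X}^{\Gamma}$ outside $\Omega$ and similarly blocks any competing branch.

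Next, the derivation of $\mathfrak{S}$ in Theorem \ref{thm:x1x2} (equations \eqref{eq:x1x20}--\eqref{eq:x1x21A}) invokes only the hypotheses $\nu_\Gamma>1$ and $s_M\in(s_\um,1]$, both of which remain in force, so the same computation yields $\sigma(\mathfrak{S};M)=\lambda_s(\mathfrak{S})$ with $s_\mathfrak{S}\in[s_{\mathbb{B}_1},s_\um)$. For $\mathcal{F}$, the derivation of the quadratic \eqref{eq:x1x22A} from $\sigma(\mathcal{F};M)=\lambda_f(M)$ together with \eqref{identity0} is unchanged; only the behaviour of the discriminant \eqref{eq:x1x23} is affected. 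Since the discriminant is nonnegative exactly when $s_M\le s_{\text{Y}^{\Gamma}}$ and $s_{\text{Y}^{\Gamma}}>1$ in the present regime, it is strictly positive throughout $(s_\um,1]$, and the larger root of the quadratic delivers a unique admissible $s_\mathcal{F}<s_\um$. Evaluating the defining identity at $s_M=1$ specializes $\sigma(\mathcal{F};M)=\lambda_f(M)$ to $\sigma(\mathcal{F};\mathbb{B})=\lambda_f(\mathbb{B})$, which is exactly the condition characterizing $\mathbb{B}_2^*$; a short substitution shows that the larger root at $s_M=s_\um$ equals $s_\um$ itself, so continuity forces $s_\mathcal{F}\to s_\um^-$ as $s_M\to s_\um^+$.

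The step I expect to require the most care is the claim that $s_\mathcal{F}$ takes values throughout $[s_{\mathbb{B}_2^*},s_\um)$, i.e., that the larger root of \eqref{eq:x1x22A}, viewed as a function of $s_M$, is continuous and monotone on $(s_\um,1]$. Continuity is immediate from the quadratic formula and the strict positivity of the discriminant. Monotonicity can be established either by implicit differentiation of \eqref{eq:x1x22A} in $s_M$ and a sign analysis of the resulting derivative, or more geometrically by invoking the scalar reduction of Section \ref{section:reduced}, where the map sending $M$ to its fast right-extension $\mathcal{F}$ is the classical tangent-line construction of Ole\u{\i}nik type and is well known to be monotone. Selection of the larger (rather than smaller) root is justified by continuity with the degenerate limit $(\mathcal{F},M)\to(\um,\um)$, which is attained only by the larger branch.
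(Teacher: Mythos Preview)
Your proposal is correct and follows essentially the same route as the paper: observe via \eqref{eq:DoubleContactExp}(b) (or Lemma~\ref{lemm:DoubleContact}) that $s_{\text{Y}^{\Gamma}}>1$ when $\nu_\Gamma>8$, so only case a) survives; carry over the computation of $\mathfrak{S}$ unchanged; and solve the quadratic \eqref{eq:x1x22A} for $\mathcal{F}$, evaluating at $s_M=1$ to identify $\mathbb{B}_2^*$. The paper makes this explicit by writing out $s_{\mathbb{B}_2^*}=\bigl(3\nu_\Gamma+\sqrt{\nu_\Gamma(\nu_\Gamma-8)}\bigr)/\bigl(4(\nu_\Gamma+1)\bigr)$ and the general root formula, but does not argue monotonicity or continuity of the larger root in $s_M$; it simply asserts the range $s_{\mathbb{B}_2^*}<s_{\mathcal{F}}<s_\um$. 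Your care about that step is therefore more than the paper itself supplies.
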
 

\begin{proof}
	As seen in Lemma \ref{lemm:DoubleContact} (or Lemma \ref{lemm:MixedContact}), when $\nu_\Gamma >8$ (or ${(\nG^-)}^2/\nG>8$) 
	\begin{equation}
	s_{\text{Y}^{\Gamma}}=\frac{\nG+\sqrt{2 \nG(\nG+1)}}{2(\nG+2)}>1.	
	\end{equation}
	Then, state $\text{Y}^{\Gamma}$ is not inside of the saturation triangle and $M$ must be chosen in $(\um,\mathbb{B}]$ ({\it i.e.,} $s_{M}\in(s_{\um},1])$. The state $\mathfrak{S}$, which is the lower extremum of undercompressive interval, is calculated as in \eqref{eq:x1x20} and satisfies $s_{\mathfrak{S}}\in [s_{\mathbb{B}_1},s_\um)$. The upper extremum $\mathcal{F}$ is found as in the proof of Theorem \ref{thm:x1x2} with $s_{M}$ satisfying quadratic equation \eqref{eq:x1x22A} with coefficients \eqref{eq:x1x22}. Notice that for $s_{M}=1$, we obtain from the equation \eqref{eq:x1x22A} 
	\begin{equation}
	s_{\mathcal{F}}=\frac{3\nG\pm\sqrt{\nG(\nG-8)}}{4(\nG+1)}<\frac{\nG}{\nG+1}=s_\um.	
	\end{equation}
	We name $\mathbb{B}_2^*\in[\Gamma,\um]$ the state associated with $\mathbb{B}$, $\mathbb{B}\in\{D,E,B\}$ such that $\sigma(\mathbb{B};\mathbb{B}_2^*)=\lambda_f(\mathbb{B})$ and $s_{\mathbb{B}_2^*}=\frac{3\nG+\sqrt{\nG(\nG-8)}}{4(\nG+1)}$. Now, consider $s_{M}\in(s_\um,1)$. Dividing the coefficients \eqref{eq:x1x22} by $\nG+1$, the solutions of \eqref{eq:x1x22A}  are   
	\begin{equation}
	s_{\mathcal{F}}=\frac{s_\um(2s_{M}+1)\pm\sqrt{s_\um^2 (2 s_{M}+1)^2-8 s_{M}^2 s_\um}}{4 s_{M}},
	\end{equation}
	among which we consider the  largest value of $s_{\mathcal{F}}$ that satisfy $s_{\mathbb{B}_2^*}<s_{\mathcal{F}}<s_\um$. 
\end{proof}

\begin{theorem}\label{thm:x1x2x32}
	For $\nu_\Gamma \leq 1$ in the setting of Theorem \ref{thm:x1x2}, but $M \in [\mathbb{B}_0,1]$, with $\mathbb{B}_0$ defined in \eqref{eq:D0}. Then $\mathfrak{S}$ satisfies \eqref{eq:x1x2} and case b) holds for $\mathcal{F}$.
\end{theorem}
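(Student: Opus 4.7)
The plan is to mimic the proof of Theorem~\ref{thm:x1x2}, exploiting the fact that the algebraic identities derived there hold regardless of the specific size of $\nu_\Gamma$. First, since $\nu_\Gamma\leq 1$, \eqref{eq:umbilic} gives $s_\um=\nu_\Gamma/(1+\nu_\Gamma)\leq 1/2=s_{\mathbb{B}_0}$, so the additional hypothesis $M\in[\mathbb{B}_0,1]$, i.e.\ $s_M\in[1/2,1]$, still places $M$ strictly on the side of $\um$ opposite to the vertex $\Gamma$, matching the geometric configuration required by Theorem~\ref{thm:x1x2}. Moreover, by Lemma~\ref{lemm:DoubleContact} no genuine pair of fast double contact states $\mathcal{Y}^\Gamma,\text{Y}^\Gamma$ lies in the interior of $[\Gamma,\mathbb{B}]$ in this regime (they either coincide with $\um$ when $\nu_\Gamma=1$ or violate the ordering $s_{\mathcal{Y}^\Gamma}<s_\um<s_{\text{Y}^\Gamma}$ when $\nu_\Gamma<1$), so the dichotomy of Theorem~\ref{thm:x1x2} collapses to its second branch and only case~b) can occur.

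For $\mathfrak{S}$, the derivation by substituting \eqref{eq:x1x2} into \eqref{identity0} via \eqref{eq:charadef}(a) is literally identical to that in Theorem~\ref{thm:x1x2}, yielding again the closed form \eqref{eq:x1x20}. I then verify $s_\mathfrak{S}\in[s_{\mathbb{B}_1},s_\um)$ by checking the endpoints: at $s_M=1$ the formula recovers $s_{\mathbb{B}_1}$, and at $s_M=1/2$ the definition of $\mathbb{B}_0$ in \eqref{eq:D0} (together with $\lambda_\um=2$) gives $s_\mathfrak{S}=s_\um$. For intermediate values, the inequality $s_M>s_\um$ (still valid because $s_\um\leq 1/2<s_M$) multiplied by the nonpositive factor $1-2s_M$ reverses exactly as in the original proof and produces $s_\mathfrak{S}<s_\um$. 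Hence the lower endpoint of the undercompressive interval is well defined by \eqref{eq:x1x2}, which is the first assertion of the theorem.

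For $\mathcal{F}$, I substitute \eqref{eq:x1x2B} into \eqref{identity0} and use \eqref{eq:charadef}(b) to obtain the same quadratic \eqref{eq:x1x25A} with coefficients \eqref{eq:x1x25}. Under $s_M>1/2$ the leading coefficient $A=(2s_M-1)(\nu_\Gamma+1)$ is strictly positive, and the discriminant \eqref{eq:discX2} factors as $(\nu_\Gamma+1)$ times a quadratic in $s_M$ whose own discriminant is $-4\nu_\Gamma<0$; hence $B^2-4AC>0$ throughout $s_M\in(1/2,1]$, and two real roots exist. The smaller root, chosen as $s_\mathcal{F}$, reduces at $s_M=1$ to $s_{\mathbb{B}_2}$ by \eqref{eq:extenD}(a), and as $s_M\to 1/2^{+}$ the degeneration $A\to 0$ leaves only the bounded root $-C/B=\nu_\Gamma/(\nu_\Gamma+1)=s_\um$ while the other escapes to $+\infty$, confirming that the smaller root is the admissible one and lies in $[s_{\mathbb{B}_2},s_\um)$.

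The main obstacle will be to exclude the larger root uniformly for all $s_M\in(1/2,1]$ and to ensure that $s_\mathcal{F}$ stays in the prescribed range without crossing $s_\um$ or dropping below $s_{\mathbb{B}_2}$. I expect Vieta's relations to settle this: the product of roots $C/A=\nu_\Gamma/[(2s_M-1)(\nu_\Gamma+1)]$ diverges as $s_M\to 1/2^{+}$ and equals $\nu_\Gamma/(\nu_\Gamma+1)<1$ at $s_M=1$ (where the larger root is $1+1/\sqrt{\nu_\Gamma+1}>1$), so the larger root leaves the unit interval throughout; continuity of the roots in $s_M$ then pins the smaller branch to the interval bounded by the two verified endpoint values $s_{\mathbb{B}_2}$ and $s_\um$. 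This identifies $\mathcal{F}$ as described by case~b) and completes the proof.
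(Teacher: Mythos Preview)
Your proof is correct and follows essentially the same approach as the paper: both reuse the closed form \eqref{eq:x1x20} for $s_{\mathfrak{S}}$, check the endpoint values $s_M=1/2$ and $s_M=1$, and appeal to the quadratic \eqref{eq:x1x25A}--\eqref{eq:x1x25} with discriminant \eqref{eq:discX2} for $s_{\mathcal{F}}$. Two minor differences are worth noting: you add the explicit invocation of Lemma~\ref{lemm:DoubleContact} and a Vieta argument to discard the larger root uniformly (the paper is content with endpoint checks), whereas the paper adds a short paragraph showing that for $s_M\in(s_\um,1/2)$ the formula \eqref{eq:x1x20} yields $s_{\mathfrak{S}}\in[s_\um,1/2]$, thereby motivating why the hypothesis must restrict $M$ to $[\mathbb{B}_0,\mathbb{B}]$ rather than to all of $(\um,\mathbb{B}]$.
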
 

\begin{proof}
	Notice that the state $\mathbb{B}_0$ and  $\um$ satisfies both equations \eqref{eq:x1x2} and \eqref{eq:x1x2B}. Furthermore $s_\um \leq 1/2 = s_{\mathbb{B}_0}$. We have two options for the choice of $M$: (i) $M\in[\mathbb{B}_0,\mathbb{B}]$   {\it i.e.,} $1/2\leq s_{M}\leq 1$; or (ii) $M\in(\um,\mathbb{B}_0)$, {\it i.e.,} $s_{\um}<s_{M}<1/2$. 
	
	In the first possibility, the equation \eqref{eq:x1x20} for $s_{\mathfrak{S}}$ is computed as in proof of Theorem \ref{thm:x1x2} and satisfies $s_{\mathfrak{S}}\in[s_{\mathbb{B}_1},s_{\um}]$. For the upper extreme $\mathcal{F}$, we have the part  b) of Theorem \ref{thm:x1x2} with $s_{\mathcal{F}}$ satisfying the quadratic equation \eqref{eq:x1x25A} with coefficients \eqref{eq:x1x25} and $s_{M}\in[1/2,1]$. Notice that when $s_{M}=1/2$ we obtain $s_{\mathcal{F}}=s_\um$ while when $s_{M}=1$, $s_{\mathcal{F}}=s_{\mathbb{B}_2}$. Then, we have that for $s_{M}\in(s_{\mathbb{B}_0},1]$ the upper extreme $\mathcal{F}$ satisfy $s_{\mathbb{B}_2}<s_{\mathcal{F}}\leq s_\um$. 
	
	In the second possibility, the equation \eqref{eq:x1x20} for $s_{\mathfrak{S}}$ is also computed as in the proof of Theorem \ref{thm:x1x2} and satisfies $s_{\mathfrak{S}}\in[s_{\um},1/2]$, {\it i.e.,} in this case both states $M$ and $\mathfrak{S}$ lie in the same side of $\um$, which contradicts our assumption of the undercompressive interval must be lie in $[\Gamma,\um]$. Therefore, there are undercompressive shocks joining states $M\in[\mathbb{B}_0,\mathbb{B}]$ and $U\in[\mathcal{F},\mathfrak{S}]\subset [\Gamma,\um]$.     
\end{proof}


\subsection{Transitional Rarefaction waves}\label{section:TransiRarefaction}
Transitional rarefactions were described in \cite{L.1990} for models of two conservation laws. Such waves arise when an integral curve of the fast family is followed by an integral curve of the slow family (in the direction of increasing characteristic speed). The two characteristic speeds must coincide at the point where these curves join. In our model, these waves arise when the umbilic point is type $II_\Gamma$ with $\Gamma \in \{G,W,O\}$, and when considering solutions of Riemann problems that involve states along the invariant line with viscosity ratio $\nG < 1$. As seen in Remark \ref{rem:inflection}, when $\nG<1$, the parameter $s_\mathbb{I}$ represents the intersection between the slow inflection locus and the invariant line defined by vertex $\Gamma$ and satisfies $s_\um<s_\mathbb{I}<s_{\mathbb{B}_0}=1/2$. We follow the same procedure presented for undercompressive shocks in the previous section. We consider a state $M$ along $[s_\um,s_{\mathbb{B}_0}]$ and show how to build transitional rarefaction waves. 
\begin{theorem}\label{thm:transRaref}
	Consider $\nu_\Gamma < 1$ in the setting of Theorem \ref{thm:x1x2}, but for $s_{M} \in [s_\um,s_{\mathbb{B}_0}]$ with $\mathbb{B}_0$ defined in \eqref{eq:D0}. Let $U$ be a state along $[0,s_\um]$, then there is a transitional rarefaction wave between $U$ and $M$ with a transition state between the fast and slow rarefaction at the umbilic point. We have two cases for this transitional rarefaction :
	\begin{enumerate}
		\item[a.] If $s_{M} \in (s_\um ,s_\mathbb{I}]$, then the transitional rarefaction consists of the following sequence of waves
		\begin{equation}\label{sec:transRarefaction}
		U \xrightarrow{R_f} \um  \xrightarrow{R_s} M.
		\end{equation}
		\item[b.] If $s_{M} \in (s_\mathbb{I},s_{\mathbb{B}_0}]$, then there is a state $M_3$, $s_{M_3}\in[s_\um,s_\mathbb{I})$ with $\sigma(M_3;M)=\lambda_s(M_3)$, such that the transitional rarefaction consists of the follows sequence of waves
		\begin{equation}\label{sec:transRarefactionM3}
		U \xrightarrow{R_f} \um  \xrightarrow{R_s} M_3 \xrightarrow{'S_s} M .
		\end{equation} 
	\end{enumerate}
\end{theorem}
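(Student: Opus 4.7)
The plan is to exploit the reduction of the system along the invariant line to the scalar Buckley--Leverett equation (Section~\ref{section:reduced}), which makes every characteristic speed and shock speed computable directly from the effective flux $f(s)$ of \eqref{surface7}. The standing assumption $\nu_\Gamma<1$ fixes the ordering $s_\um<s_\mathbb{I}<s_{\mathbb{B}_0}=1/2$ by \eqref{eq:InflectionS1} and \eqref{eq:D01}. By Remark~\ref{rem:charspeed}, on the vertex side $[0,s_\um]$ the fast family is $\lambda_f=\lambda_b=f'$, while on the opposite side $[s_\um,1]$ the slow family is $\lambda_s=\lambda_b=f'$. Since $f'$ has its unique maximum at $s_\mathbb{I}$, it is strictly increasing on $[0,s_\mathbb{I}]$ and strictly decreasing on $[s_\mathbb{I},1]$.

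The first segment $U\xrightarrow{R_f}\um$ is immediate: along the invariant line $f'$ is strictly increasing on $[s_U,s_\um]\subset[0,s_\mathbb{I}]$, so the integral curve gives a genuine fast rarefaction terminating at $\um$ with speed $\lambda_f(\um)=\lambda_\um=2$. Continuity of the two characteristic speeds at the umbilic point, $\lambda_s(\um)=\lambda_f(\um)=2$, gives the compatibility $v_f^{R_f}=v_i^{R_s}$ as an equality and permits the switch of families. Case (a) is then immediate: for $s_M\in(s_\um,s_\mathbb{I}]$ the entire slow integral curve from $\um$ to $M$ lies on the increasing branch of $\lambda_s=f'$, so $\um\xrightarrow{R_s}M$ is a genuine slow rarefaction and the sequence \eqref{sec:transRarefaction} is admissible.

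For case (b) a pure slow rarefaction cannot reach $M$ because $\lambda_s=f'$ starts to decrease at $s_\mathbb{I}<s_M$. I would construct $M_3$ via the classical Oleinik tangent argument specialized to the invariant line. Set
\begin{equation*}
\varphi(t):=f'(t)(s_M-t)-\bigl(f(s_M)-f(t)\bigr),\qquad t\in[s_\um,s_\mathbb{I}].
\end{equation*}
Direct computation gives $\varphi'(t)=f''(t)(s_M-t)$, which is strictly positive on $(s_\um,s_\mathbb{I})$ because $f$ is strictly convex there and $s_M>s_\mathbb{I}>t$. The boundary values are $\varphi(s_\um)\le 0$ (with equality only at $s_M=s_{\mathbb{B}_0}$, because $f'(s_\um)=2$ by \eqref{eq:D0} is exactly the secant slope joining $s_\um$ to $s_{\mathbb{B}_0}$ and $f$ is concave on $[s_\mathbb{I},1]$) and $\varphi(s_\mathbb{I})>0$ (since the tangent at $s_\mathbb{I}$ lies strictly above $f$ on the concave branch at $s_M$). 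The intermediate value theorem then produces a unique $s_{M_3}\in[s_\um,s_\mathbb{I})$ satisfying $\varphi(s_{M_3})=0$, equivalent by \eqref{RankHugParameter} and \eqref{eigenoverbif} to
\begin{equation*}
\sigma(M_3;M)=\frac{f(s_M)-f(s_{M_3})}{s_M-s_{M_3}}=f'(s_{M_3})=\lambda_s(M_3).
\end{equation*}
The rarefaction $\um\xrightarrow{R_s}M_3$ is valid by the case-(a) argument applied with $M_3$ in place of $M$, and compatibility at $M_3$ holds as equality.

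Finally, to certify $M_3\xrightarrow{\CS_s}M$ as a left-characteristic slow shock in the sense of Definition~\ref{def:charequilibium}, two Lax-type inequalities must be checked. The inequality $\lambda_s(M)<\sigma(M_3;M)$ is immediate from unimodality of $f'$: $s_M>s_\mathbb{I}>s_{M_3}$ gives $f'(s_M)<f'(s_{M_3})$. The remaining inequality $\sigma(M_3;M)<\lambda_f(M)=\lambda_a(s_M)$ is the main obstacle. I would handle it by studying $\lambda_a(s)=2s/[s^2+\nu_\Gamma(1-s)^2]$ from \eqref{eigenoverbif}(a): a direct differentiation shows $\lambda_a$ has its unique critical point on $(s_\um,1)$ at $s^{*}=\sqrt{\nu_\Gamma/(1+\nu_\Gamma)}$, an interior maximum with value $1+\sqrt{(1+\nu_\Gamma)/\nu_\Gamma}$, while $\lambda_a(s_\um)=\lambda_a(1)=2$, so $\lambda_a>2$ strictly on $(s_\um,1)$. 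Combining this lower bound on $\lambda_a(s_M)$ with the upper bound $\sigma(M_3;M)\le f'(s_\mathbb{I})$ and the cubic \eqref{eq:InflectionS} characterizing $s_\mathbb{I}$ reduces the matter to a scalar inequality in $\nu_\Gamma$ that holds throughout $(0,1)$; this last algebraic verification is where I expect to spend most of the effort.
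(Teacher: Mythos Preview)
Your argument is correct but follows a genuinely different route from the paper.  The paper works algebraically: it substitutes $\sigma(M_3;M)=\lambda_s(M_3)$ into identity~\eqref{identity0} to obtain the explicit quadratic
\[
(2s_M-1)(\nu_\Gamma+1)\,s_{M_3}^2-2s_M(\nu_\Gamma+1)\,s_{M_3}+\nu_\Gamma=0,
\]
analyzes its discriminant and roots, and then proves $s_\um\le s_{M_3}<s_\mathbb{I}$ by two separate algebraic computations (a chain of inequalities for the lower bound, and evaluation of the cubic $P_{s_\um}$ from Remark~\ref{rem:inflection} for the upper bound).  Your Ole\u{\i}nik-tangent construction via $\varphi$ and the intermediate value theorem is cleaner: it delivers existence, uniqueness, and the localization $s_{M_3}\in[s_\um,s_\mathbb{I})$ in one stroke, at the cost of checking the two boundary signs.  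Both approaches are valid; yours is more geometric and reusable, the paper's is more explicit about the actual root.

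Two small points.  First, your justification ``unimodality of $f'$ gives $f'(s_M)<f'(s_{M_3})$'' is not quite right as stated: unimodality only places both values below $f'(s_\mathbb{I})$.  The inequality \emph{does} hold, but it follows from your own tangent construction---the chord from $s_{M_3}$ to $s_M$ is tangent at the left endpoint and $f$ lies strictly above it on $(s_{M_3},s_M)$, forcing $f'(s_M)<\sigma$ at the right endpoint.  Second, the paper's proof simply stops after locating $M_3$; it never verifies $\sigma<\lambda_f(M)=\lambda_a(s_M)$ explicitly.  So your ``main obstacle'' is in fact extra work beyond what the paper does, and you should not expect to find guidance for it there.
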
 
\begin{proof}
	As we see in Remark \ref{rem:charspeed} $\lambda_b(s) = f'(s)$ and satisfy $\lambda_b(s) = \lambda_f(s)$ for $s\in[0,s_\um]$ and $\lambda_b(s) = \lambda_s(s)$ for  $s\in[s_\um,1]$. Moreover 
 	\begin{equation}\label{eq:derivatelambdab}
	\lambda_b'(s)=\dfrac{2s_\um(2 s^3-3s^2+s_\um)}{(\nu_G+1)(s^2-2s_\um s+s_\um)^3}.
	\end{equation}
	From Remark \ref{rem:inflection} $\lambda_b'(s_\mathbb{I}) = 0$ and $~s_\um<s_\mathbb{I}<1/2$. By \eqref{eq:derivatelambdab} we see that $\lambda_b(s)$ is an increasing function in $(0,s_\mathbb{I})$. Then, we can put in $~s_\um$ a slow rarefaction preceded by a fast rarefaction satisfying the speed compatibility criteria \eqref{eq:Cond_Compat}. Therefore, if $s_{M} \in (s_\um,s_\mathbb{I}]$ we have the sequence given in \eqref{sec:transRarefaction} for any state $s_{U} \in [0,s_\um)$. Now, consider $s_{M} \in (s_\mathbb{I},1/2]$ and we seek a state $M_3$, $s_{M_3}\in[s_\um,s_\mathbb{I})$ such that satisfy $\sigma(M_3;M)=\lambda_s(M_3)$ . Therefore from \eqref{identity0} and \eqref{eq:charadef}(a) we obtain the quadratic equation
	\begin{equation}\label{eq:TransRareM3A}
	A s^2_{M_3}+Bs_{M_3}+C=0, 
	\end{equation}
	where 
	\begin{equation}\label{eq:TransRareM3}
	A = (2s_{M}-1)(\nG+1), ~~~B =-2s_{M} (\nG+1)~~\mbox{ and }~~	C= \nG.
	\end{equation}
	Notice that the discriminant of \eqref{eq:TransRareM3A} is positive for all $s_{M}$ with $s_{M}<1/2$. Thus, the solutions of \eqref{eq:TransRareM3A} have opposite signs. If $ M$ is equal to $\mathbb{B}_0 $, $A$ is null in \eqref{eq:TransRareM3} then we have a linear equation $-(\nG +1)s_{M_3} + \nG = 0.$ Hence $s_{M_3} = s_\um$. Moreover, notice that
	\begin{eqnarray*}
	s_\um (s_\um-1)(2s_{M}-1)^2\leq 0 \!\!\!\!\!\!& \Leftrightarrow (-2s_{M}s_\um+s_{M}+s_\um)^2 \leq s_{M}^2-2s_{M}s_\um+s_\um\\
	& \Leftrightarrow -2s_{M}s_\um+s_{M}+s_\um \leq \sqrt{s_{M}^2-2s_{M}s_\um+s_\um} \\
	& ~~\Leftrightarrow s_\um \leq \displaystyle\frac{s_{M} - \sqrt{s_{M}^2-2s_{M}s_\um+s_\um}}{2s_{M}-1} = s_{M_3}.~~~~~~~~~
	\end{eqnarray*}   
 
	We next prove that $s_{M_3}<s_\mathbb{I}$. Let $P_{s_\um}(s)=2s^3 - 3s^2 +s_\um$ be the function associated with \eqref{eq:InflectionS}. From Remark \ref{rem:inflection} $P_{s_\um}(s_\mathbb{I})=0$, $P_{s_\um}(s)>0$ for $s\in[0,s_\mathbb{I})$ and  $P_{s_\um}(s)<0$ for $s\in(s_\mathbb{I},1]$ for all $0\leq  s_\um \leq1$. On the other hand, dividing the coefficients \eqref{eq:TransRareM3} by $(\nG+1)$, we obtain 
	\begin{equation}\label{eq:TransRarefaUltimo}
	(2s_{M}-1){s_{M_3}}^2-2 s_{M} s_{M_3} +s_{\um}=0.
	\end{equation}
	From \eqref{eq:TransRarefaUltimo} we have $s_\um = 2s_{M} s_{M_3} -(2s_{M}-1)s^2_{M_3}$. Notice that
	\begin{eqnarray}
	P_{s_\um}(s_{M_3})=2s^3_{M_3} - 3s^2_{M_3} +s_\um = &2s_{M_3}(1-s_{M_3})(s_{M}-s_{M_3})> 0. 
	\end{eqnarray}
	Hence, $s_\um \leq s_{M_3} < s_\mathbb{I}$.  Therefore, if $s_{M} \in (s_\mathbb{I},s_{\mathbb{B}_0}]$ there is a state $M_3$, with $s_{M_3} \in [s_\um,s_\mathbb{I})$, such that the transitional rarefaction is given by the sequence  \eqref{sec:transRarefactionM3} for any state $s_{U} \in [0,s_\um)$. 
	
\end{proof}
\section{The surface of undercompressive shocks, \texorpdfstring{$\bmm(U)=I$}{B U =  I}  } \label{section:TransSurface}
In Section \ref{section:TransMap}, the undercompressive shocks appear along invariant lines (secondary bifurcation loci, see Definition \ref{def:seconBif}) into the saturation triangle. This representation is simply a projection of a more complex structure called the surface of undercompressive shocks. Therefore, presenting undercompressive shocks constructed in a 3-dimensional state space is convenient. Besides two coordinates in the saturation triangle, we add the shock speed (or characteristic speed, depending on the wave group) as a third coordinate. In this setting, we define the surface of undercompressive shocks $\mathcal{T}$ when $\bmm(U)$ is the identity matrix and study its construction.
\subsection{Undercompressive boundaries}
In this section, we use $(U^-,\sigma)-$ or $(U^+,\sigma)-$ coordinates for the visualization of the undercompressive surface in 3-dimensional space,  $\Omega\times\mathbb{R}^+$. We write a generic state as $P=(s_w,s_o,v)$, where $(s_w,s_o)$ is a state in the saturation triangle $\Omega$ and $v\in\mathbb{R}^+$ is a speed; see Fig.~\ref{fig:SuperficeTraninicio0}(a). In this context, a state  $M\in\Omega$ can be written as $M=(M_w,M_o,0)$.
 \begin{figure}[ht]
 	\centering
 	\subfigure[3-dimensional state space $\Omega\times\mathbb{R}^+$ and orthogonal plane associated with invariant line ${[G,D]}$.]{\includegraphics[scale=0.35]{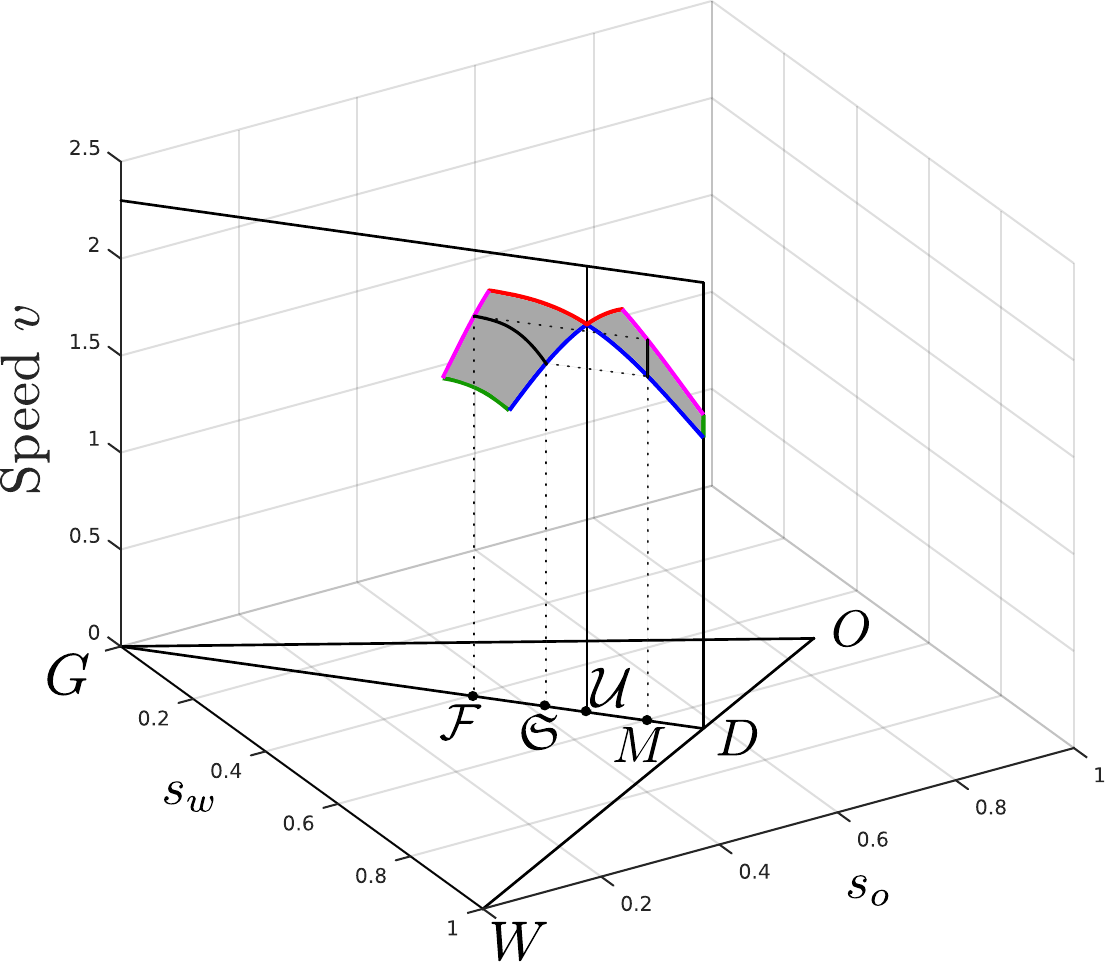}}\hspace{1.5mm}
 	 	\subfigure[Boundaries of the surface of undercompressive shocks for the case $1<\nu_G\leq8$.]{\includegraphics[scale=0.3]{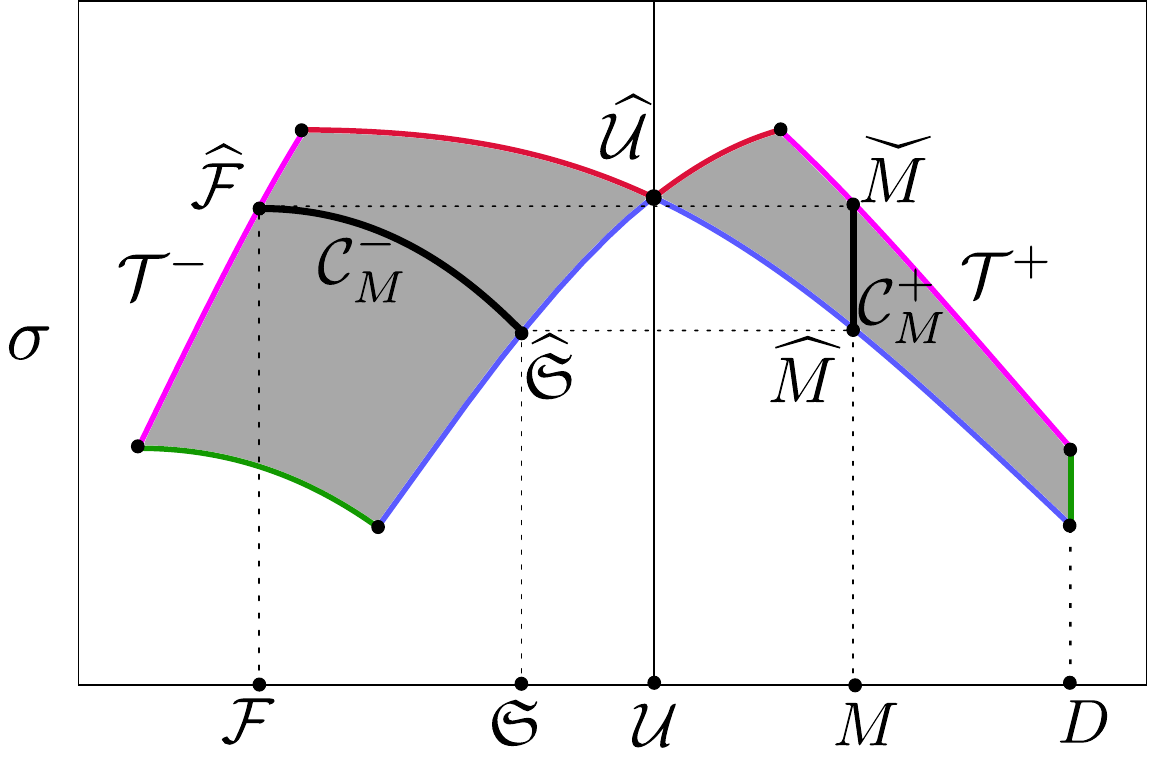}}
 	\caption{The surface of undercompressive shocks for the case $\bmm(U)=I$. (a) Representation of the invariant plane ${[G,D]}\times\mathbb{R}^+ $. (b) Procedure to construct the undercompressive interval for $M\in{(\um,D]}$, and ${[\mathcal{F}(M),\mathfrak{S}(M)]}\subset  {[G,\um)}$. The horizontal axis corresponds to ${[G,D]}$, and the vertical axis is the shock speed.}
 	\label{fig:SuperficeTraninicio0}
 \end{figure}
In Section \ref{section:TransMap}, we presented the construction of the undercompressive interval corresponding to the effective saturation on each invariant line. For a given state $ M $ along the invariant segment $ (\um, D] $, we compute the undercompressive segment $ [\mathcal{F}(M), \mathfrak{S}(M)] $, which includes all states $ U $ that form an undercompressive shock with $ M $, satisfying $ \sigma_{\mathfrak{S}} \leq \sigma_U \leq \sigma_{\mathcal{F}} $, where $ \sigma_{\mathfrak{S}} = \sigma(\mathfrak{S}(M); M) $, $ \sigma_U = \sigma(U; M) $, and $ \sigma_{\mathcal{F}} = \sigma(\mathcal{F}(M); M) $ ($ \mathfrak{S} $ and $ \mathcal{F} $ as defined in Theorem \ref{thm:x1x2}, Theorem \ref{thm:x1x2x32}, or Corollary \ref{cor:x1x2r8}). 

The procedure outlined constructs a pair of curves in $ \Omega \times \mathbb{R}^+ $:
\begin{eqnarray}
         \mathcal{C}_{M}^-=&\{(U_w,U_o,\sigma_U): (U_w,U_o)\in [\mathcal{F}(M),\mathfrak{S}(M)], \, \sigma_{\mathfrak{S}}\leq\sigma_U\leq\sigma_{\mathcal{F}} \},\\
       \mathcal{C}_{M}^+=&\{(U_w,U_o,\sigma_U): U_w=M_w,\, U_o= M_o, \, \sigma_{\mathfrak{S}}\leq\sigma_U\leq\sigma_{\mathcal{F}} \},
\end{eqnarray}
with $ M \in (\um, D]$. Additionally, note that the curve $ \mathcal{C}_{M}^- $ connects the points $ \widehat{\mathfrak{S}} = (\mathfrak{S}_w, \mathfrak{S}_o, \sigma_{\mathfrak{S}}) $ and $ \widehat{\mathcal{F}} = (\mathcal{F}_w, \mathcal{F}_o, \sigma_{\mathcal{F}}) $. Similarly, the curve $ \mathcal{C}_{M}^+ $ connects the points $ \widehat{M} = (M_w, M_o, \sigma_{\mathfrak{S}}) $ and $ \widecheck{M} = (M_w, M_o, \sigma_{\mathcal{F}}) $; see Fig.~\ref{fig:SuperficeTraninicio0}(b). 

We define the surface of undercompressive shocks $\mathcal{T}$, from its representations $\mathcal{T}^-$ in $(U^-,\sigma)$-coordinates and $\mathcal{T}^+$ in $(U^+,\sigma)$-coordinates. Thus, $\mathcal{T}^-$ can be define as the union of curves $\mathcal{C}_{M}^-$, for all $M \in(\um,D]$. Likewise, we can define $\mathcal{T}^+$ from the union of curves $\mathcal{C}_{M}^+$, for all $M \in(\um,D]$; see Fig.~\ref{fig:SuperficeTraninicio0}(b). We proceed analogously for the other invariant segments $[W, E]$ and $[O, B]$.  By varying $M$ and using Theorems \ref{thm:x1x2}, and \ref{thm:x1x2x32}, and Corollary \ref{cor:x1x2r8}, we construct the {\it surface of undercompressive shocks} $\mathcal{T}$.  

Again, the procedure outlined above constructs the boundaries of $\mathcal{T}$, which are associated with the endpoints $ \widehat{\mathfrak{S}},$ $ \widehat{\mathcal{F}},$ $ \widehat{M},$ and $ \widecheck{M} $ of the curves $ \mathcal{C}_{M}^- $ and $ \mathcal{C}_{M}^+$, defined for a state $M$ in an invariant segment $[\mathcal{U}, D]$; see Section 5.
Similar to $\mathcal{T}^-$ and $\mathcal{T}^+$, each boundary can be defined from its representation in $(U^-,\sigma)$-coordinates and $(U^+,\sigma)$-coordinates as follows:  
\begin{definition} \label{Def:TranBoun} 
	Consider the surface of undercompressive shocks associated with the invariant line $[G, D]$. We define
	\begin{enumerate} 
		\item the \textit{slow characteristic boundary} $ (SCB) $ as the set of pairs of states $\widehat{\mathfrak{S}}$ and $\widehat{M}$ in  $\Omega\times\mathbb{R}^+$ such that there is a left-char.$\,s$-shock between $\mathfrak{S}$ and $M$, with $\sigma_{\widehat{\mathfrak{S}}}=\sigma_{\widehat{M}}=\sigma(\mathfrak{S};M)=\lambda_s(\mathfrak{S})$ where $\mathfrak{S}\in[G,~\um)$ and  $M\in(\um,D]$. The states $\widehat{\mathfrak{S}}\in {SCB}^-\subset \mathcal{T}^-$ and $\widehat{M}\in {SCB}^+\subset \mathcal{T}^+$; see blue curves in Figs.~\ref{fig:Transiplane1e8} and \ref{fig:WCNonLocalLA}.
		\item the \textit{fast characteristic boundary} $ (FCB) $ as the set of pairs of states $\widehat{\mathcal{F}}$ and $\widecheck{M}$ in $\Omega\times\mathbb{R}^+$ such that there is a right-char.$\,f$-shock between $\mathcal{F}$ and $M$, with  $\sigma_{\widehat{\mathcal{F}}}=\sigma_{\widecheck{M}}=\sigma(\mathcal{F};M)=\lambda_f(M)$  where $\mathcal{F}\in[G
        ,~\um)$ and $M\in(\um,D]$. The states $\widehat{\mathcal{F}}\in {FCB}^-\subset \mathcal{T}^-$ and $\widecheck{M}\in {FCB}^+\subset \mathcal{T}^+$; see red curves in Figs.~\ref{fig:Transiplane1e8} and \ref{fig:WCNonLocalLA}(b).
		\item the \textit{undercompressive characteristic boundary} $ (UCB) $ as the set of pairs of states $\widehat{\mathcal{F}}$ and $\widecheck{M}$ in $\Omega\times\mathbb{R}^+$ such that there is a left-char.$\,u$-shock between $\mathcal{F}$ and $M$, with $\sigma_{\widehat{\mathcal{F}}}=\sigma_{\widecheck{M}}=\sigma(\mathcal{F};M)=\lambda_f(\mathcal{F})$ where $\mathcal{F}\in[G,~\um)$ and $M\in(\um,D]$. The states $\widehat{\mathcal{F}}\in {UCB}^-\subset \mathcal{T}^-$ and $\widecheck{M}\in {UCB}^+\subset \mathcal{T}^+$; see magenta curves in Figs.~\ref{fig:Transiplane1e8}(a) and \ref{fig:WCNonLocalLA}(a).
		\item the \textit{genuine undercompressive boundary} $ (GUB) $ as the set of pairs of states $\widehat{U}$ and $\widebar{D}$ in $\Omega\times\mathbb{R}^+$ such that there is an $u$-shock between $U$ and $D$, where $\sigma_{\widehat{U}}=\sigma_{\widebar{D}}=\sigma(U;D)$ and $U\in(\mathcal{F}(D),\mathfrak{S}(D))$. The states $\widehat{U}\in {GUB}^-\subset \mathcal{T}^-$ and $\widebar{D}\in {GUB}^+\subset \mathcal{T}^+$; see green curves in Figs.~\ref{fig:Transiplane1e8} and \ref{fig:WCNonLocalLA}.
 	\end{enumerate}
\end{definition}
\begin{remark}
    Definition \ref{Def:TranBoun} was presented considering the invariant line $[G, D]$. However, it can be generalized using the invariant line $[\Gamma, \mathbb{B}]$, with $\Gamma \in \{G, W, O\}$ and $\mathbb{B} \in \{D, E, B\}$.  
\end{remark}

In the following lemmas, we show the different types of undercompressive boundaries, which depend on the values of the viscosity ratio $\nu_{\Gamma}$ defined in \eqref{paramS}. We describe the procedure for the invariant line $[G, D]$ and the viscosity ratio $\nu_{G}$ since the process for the other invariant lines is analogous.
\begin{lemma} \label{lem:transnu4bondaries}
	Consider the invariant line $[G,D]$ with a viscosity ratio satisfying $1 < \nu_{G} \leq 8$. The surface of undercompressive shocks $\mathcal{T}$, associated with the invariant plane corresponding to $[G,D]$, is bounded by the following curves ({\it see Fig.~\ref{fig:Transiplane1e8}(a)}):
	\begin{enumerate}
		\item The curves $SCB^- \subset \mathcal{T}^-$ and $SCB^+ \subset \mathcal{T}^+$, connecting the points $\widehat{D}_1$ and $\widehat{\um}$, and $\widehat{\um}$ and $\widehat{D}$, respectively. 
		\item The curves $FCB^- \subset \mathcal{T}^-$ and $FCB^+ \subset \mathcal{T}^+$, connecting the points $\widehat{\mathcal{Y}}^G$ and $\widehat{\um}$, and $\widehat{\um}$ and $\widehat{Y}^G$, respectively. 
		\item The curves $UCB^- \subset \mathcal{T}^-$ and $UCB^+ \subset \mathcal{T}^+$, connecting the points $\widehat{D}_2$ and $\widehat{\mathcal{Y}}^G$, and $\widecheck{D}$ and $\widehat{Y}^G$, respectively. 
		\item The curves $GUB^- \subset \mathcal{T}^-$ and $GUB^+ \subset \mathcal{T}^+$, connecting the points $\widehat{D}_2$ and $\widehat{D}_1$, and $\widecheck{D}$ and $\widehat{D}$, respectively.
	\end{enumerate}
\end{lemma}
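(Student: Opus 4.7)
The plan is to view $\mathcal{T}$ as the disjoint union, over $M \in (\um, D]$, of the curve pairs $\mathcal{C}_M^- \subset \mathcal{T}^-$ and $\mathcal{C}_M^+ \subset \mathcal{T}^+$ produced by Theorem \ref{thm:x1x2}, and then to identify the four topological boundary arcs of this foliation: the two endpoints of each $\mathcal{C}_M$ swept out by the parameter $M$, together with the single pair of curves arising at the extreme value $M = D$. Continuity of $\mathfrak{S}(M)$ and $\mathcal{F}(M)$ in $M$ follows from the explicit quadratics in the proof of Theorem \ref{thm:x1x2}, whose discriminants \eqref{eq:x1x23} and \eqref{eq:discX2} remain strictly positive on the relevant open intervals. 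At the opposite extreme $M \to \um$, both endpoints $\mathfrak{S}(M)$ and $\mathcal{F}(M)$ collapse to $\um$ with speed $\lambda_\um = 2$, so three visible arcs survive from the $M$-swept family, which together with the $M = D$ arc give the four curves claimed.

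For item (1), the defining relation $\sigma(\mathfrak{S}; M) = \lambda_s(\mathfrak{S})$ imposed by Theorem \ref{thm:x1x2} is exactly the SCB condition of Definition \ref{Def:TranBoun}(1). Substituting $s_M = 1$ into \eqref{eq:x1x20} gives $s_{\mathfrak{S}} = \nu_G/(\nu_G + 2) = s_{D_1}$ by \eqref{eq:extenD}(b), whereas the limit $s_M \to s_\um$ produces $s_{\mathfrak{S}} \to s_\um$. Therefore $\{\widehat{\mathfrak{S}}(M) : M \in (\um, D]\}$ is SCB$^-$ running from $\widehat{\um}$ to $\widehat{D}_1$, and the companion curve $\{\widehat{M}\}$ is SCB$^+$ running from $\widehat{\um}$ to $\widehat{D}$.

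For items (2) and (3), I would split the $\mathcal{F}$-branch at $M = Y^G$ according to the two cases of Theorem \ref{thm:x1x2}. Case (a), $s_M \in (s_\um, s_{Y^G}]$, gives $\sigma(\mathcal{F}; M) = \lambda_f(M)$, which is exactly the FCB condition of Definition \ref{Def:TranBoun}(2); here $\mathcal{F}(M)$ interpolates between $\um$ (as $M \to \um$) and $\mathcal{Y}^G$ (at $M = Y^G$, by the defining relation \eqref{eqDoubleC} of the fast double contact). Case (b), $s_M \in (s_{Y^G}, 1]$, gives $\sigma(\mathcal{F}; M) = \lambda_f(\mathcal{F})$, which matches the UCB condition of Definition \ref{Def:TranBoun}(3); here $\mathcal{F}(M)$ interpolates between $\mathcal{Y}^G$ (at $M = Y^G$) and $D_2$ (at $M = D$, via \eqref{eq:extenD}(a)). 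These two arcs constitute items (2) and (3), in their $\mathcal{T}^-$ and $\mathcal{T}^+$ versions.

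Finally, for item (4), fix $M = D$: Theorem \ref{thm:x1x2} identifies the undercompressive interval as $[D_2, D_1]$, and as the left state $U$ moves from $D_2$ to $D_1$ with speed $\sigma(U; D)$, the curve $\mathcal{C}_D^-$ traces GUB$^-$ from $\widehat{D}_2$ to $\widehat{D}_1$, while $\mathcal{C}_D^+$ is the vertical segment at $(D_w, D_o)$ from $\widecheck{D}$ to $\widehat{D}$. The main obstacle is ensuring that the FCB and UCB arcs meet consistently at $\widehat{\mathcal{Y}}^G \in \mathcal{T}^-$ and $\widehat{Y}^G \in \mathcal{T}^+$; this reduces to verifying that the two cases of Theorem \ref{thm:x1x2} yield the same $\mathcal{F}$ at $M = Y^G$, which follows because the pair $(\mathcal{Y}^G, Y^G)$ satisfies both \eqref{eq:x1x2A} and \eqref{eq:x1x2B} by \eqref{Doubledef}, so the two quadratics \eqref{eq:x1x22A} and \eqref{eq:x1x25A} share the common root $s_\mathcal{F} = s_{\mathcal{Y}^G}$ there.
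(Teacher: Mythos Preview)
Your proposal is correct and follows exactly the approach taken in the paper, which states only that the characterization ``follows directly from Definition~\ref{Def:TranBoun} and Theorem~\ref{thm:x1x2}''. You have simply unpacked this in detail---identifying which case of Theorem~\ref{thm:x1x2} feeds each boundary type in Definition~\ref{Def:TranBoun}, checking the endpoint values via \eqref{eq:x1x20}, \eqref{eq:extenD}, and \eqref{eq:DoubleContactExp}, and verifying the gluing at $M=\text{Y}^G$---all of which the paper leaves implicit.
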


\begin{figure}[ht]
	\centering
	\subfigure[Surface of undercompressive shocks $\mathcal{T}$ for ${[G,D]}$, when $1<\nu_G<8$.]{\includegraphics[scale=0.315]{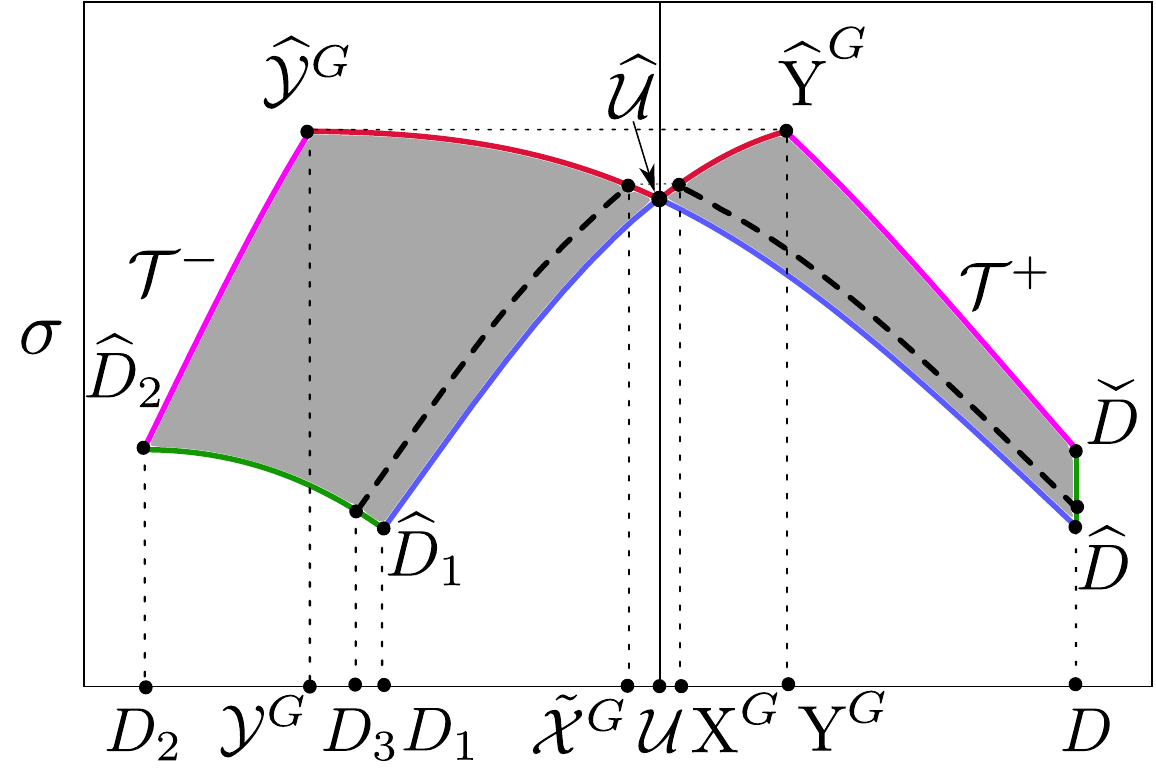}}\hspace{1.65mm}
	\subfigure[Surface of undercompressive shocks $\mathcal{T}$ for ${[G,D]}$, when $\nu_G>8$ and ${(\nu_G^-)}^2/\nu_G\leq8$.]{\includegraphics[scale=0.315]{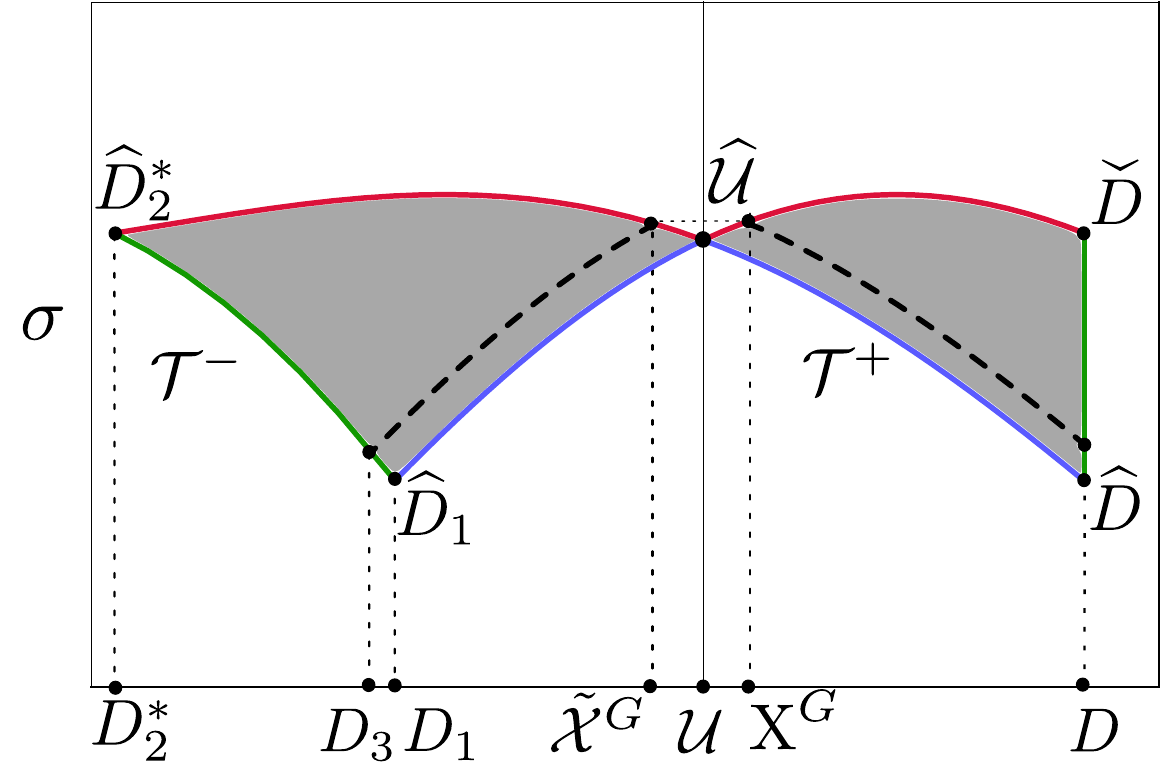}}
	\caption{Undercompressive boundaries in Definition \ref{Def:TranBoun}: The blue curves are $SCB$, red curves are $FCB$, magenta curves are $UCB$, and green curves are $GUB$. The horizontal axis corresponds to $[G, D]$, and the vertical axis is the shock speed; see Remark \ref{rem:twowings}. The dashed curve is associated with the boundary for loss of compatibility of characteristic shocks; see Remark \ref{rem:transitionalBY1} \protect\cite{Lozano2018}.
	}
	\label{fig:Transiplane1e8}
\end{figure}
\begin{proof}
This characterization of the undercompressive boundaries follows directly from Definition \ref{Def:TranBoun} and Theorem \ref{thm:x1x2}.
\end{proof}

\begin{lemma}\label{lem:transnu3bondariesmayor8}
		For $\nu_G>8$, let $D_2^*$ be the right $f$-extension of $D$ on $[G,D]$ such that $\sigma(D_2^*;D)=\lambda_f(D)$, see Corollary \ref{cor:x1x2r8}. Then, the surface of undercompressive shocks $\mathcal{T}$, associated with the invariant plane corresponding to $[G,D]$, comprises the region bounded by the following curves ({\it refers to Fig.~\ref{fig:Transiplane1e8}(b)}):
\begin{enumerate}
			\item The curves $SCB^- \subset \mathcal{T}^-$ and $SCB^+ \subset \mathcal{T}^+$, as in Lemma \ref{lem:transnu4bondaries} (1).
			\item The curves $FCB^- \subset \mathcal{T}^-$ and $FCB^+ \subset \mathcal{T}^+$, connecting the points $\widehat{D}_2^*$ $\widehat{\um}$, and $\widecheck{D}$  and $\widehat{\um}$ respectively.
			\item The curves $GUB^- \subset \mathcal{T}^-$ and $GUB^+ \subset \mathcal{T}^+$ connecting the points $\widehat{D}_2^*$ and $\widehat{D}1$, and $\widehat{D} $ and $\widehat{\um}$ respectively. 
\end{enumerate}
\end{lemma}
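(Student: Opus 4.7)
The plan is to adapt the argument of Lemma \ref{lem:transnu4bondaries} to the regime $\nu_G > 8$, using Corollary \ref{cor:x1x2r8} in place of Theorem \ref{thm:x1x2}. The key structural simplification is that, since $\nu_G > 8$, the fast double-contact pair $(\mathcal{Y}^G, \text{Y}^G)$ falls outside the saturation triangle (by Lemma \ref{lemm:DoubleContact}, where in particular $s_{\text{Y}^G} > 1$), so case (b) of Theorem \ref{thm:x1x2} is vacuous on $(\um, D]$. Consequently the $UCB$ component of the boundary disappears and only three boundary arcs remain, precisely the three claimed by the lemma.

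I would parametrize $\mathcal{T}$ by the right state $M \in (\um, D]$ and appeal to Corollary \ref{cor:x1x2r8} to obtain, for each such $M$, an undercompressive interval $[\mathcal{F}(M), \mathfrak{S}(M)] \subset [G, \um)$: here $\mathfrak{S}(M)$ is determined by $\sigma(\mathfrak{S}; M) = \lambda_s(\mathfrak{S})$ with $s_{\mathfrak{S}} \in [s_{D_1}, s_\um)$, and $\mathcal{F}(M)$ by $\sigma(\mathcal{F}; M) = \lambda_f(M)$ with $s_{\mathcal{F}} \in [s_{D_2^*}, s_\um)$. By Definition \ref{Def:TranBoun}, the trace of $(\mathfrak{S}(M), M)$ equipped with the common speed is exactly $SCB^- \cup SCB^+$, and likewise the trace of $(\mathcal{F}(M), M)$ is $FCB^- \cup FCB^+$. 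To pin down the endpoints I would evaluate the explicit quadratic expressions for $\mathfrak{S}$ and $\mathcal{F}$ at the two limiting right states: as $M \to \um^+$, both $s_{\mathfrak{S}}$ and $s_{\mathcal{F}}$ converge to $s_\um$ and the common speed converges to $\lambda_\um = 2$, so all four curves meet at $\widehat{\um}$; while at $M = D$, Corollary \ref{cor:x1x2r8} yields $\mathfrak{S}(D) = D_1$ with speed $\lambda_s(D_1)$ and $\mathcal{F}(D) = D_2^*$ with speed $\lambda_f(D)$, giving the endpoints $\widehat{D}_1, \widehat{D}$ of $SCB$ and $\widehat{D}_2^*, \widecheck{D}$ of $FCB$. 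The $GUB$ piece is then the fiber over $M = D$: fixing $M = D$ and letting $U$ sweep through $[D_2^*, D_1]$ traces $GUB^-$ from $\widehat{D}_2^*$ to $\widehat{D}_1$, while the shock speed decreases monotonically from $\lambda_f(D)$ to $\lambda_s(D_1)$, tracing the fiber $GUB^+$ over $D$.

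The main point to verify is completeness of the boundary, i.e., that no $UCB$-type arc survives. This is where the hypothesis $\nu_G > 8$ enters essentially: by \eqref{eq:DoubleContactExp}(b), one has $s_{\text{Y}^G} > 1$, so the case-(b) hypothesis $s_M \in (s_{\text{Y}^G}, 1]$ is vacuous and case (a) provides the only admissible characterization of $\mathcal{F}(M)$. The most delicate point will be verifying continuity and monotonicity in $s_M$ of the maps $M \mapsto \mathfrak{S}(M)$ and $M \mapsto \mathcal{F}(M)$ on $(s_\um, 1]$, which is needed to conclude that $SCB^\pm$ and $FCB^\pm$ are genuine embedded arcs joining the endpoints above. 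This should follow from the explicit rational and quadratic expressions derived in the proofs of Theorem \ref{thm:x1x2} and Corollary \ref{cor:x1x2r8}, together with the discriminant positivity established there.
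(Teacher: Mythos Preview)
Your proposal is correct and follows essentially the same approach as the paper's own proof, which consists merely of the two sentences: ``The justification follows directly from Definition \ref{Def:TranBoun} and from Corollary \ref{cor:x1x2r8}. Notice that there is no boundary of type $UCB$.'' You supply considerably more detail---in particular, the explicit endpoint analysis at $M=D$ and $M\to\um^+$, and the observation that the vacuity of case (b) in Theorem \ref{thm:x1x2} (since $s_{\text{Y}^G}>1$ when $\nu_G>8$) is precisely what eliminates the $UCB$ arc---but the logical skeleton is identical.
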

\begin{proof}
The justification follows directly from Definition \ref{Def:TranBoun} and from Corollary \ref{cor:x1x2r8}. Notice that there is no boundary of type $UCB$. 
\end{proof}
\begin{lemma} \label{lem:transnu3bondariesmenor1}
	For $0<\nu_G\leq1$, let $D_0$ be the left $f$-extension of $\,\um$ as in the Theorem \ref{thm:x1x2x32}. Then, the surface of undercompressive shocks $\mathcal{T}$, associated with the invariant plane corresponding to $[G,D]$, comprises the region bounded by the following curves ({\it refers to Fig.~\ref{fig:WCNonLocalLA}(a)}):
\begin{enumerate}	
			\item The curves $SCB^- \subset \mathcal{T}^-$ and $SCB^+ \subset \mathcal{T}^+$, connecting the points $\widehat{D}_1$ and $\widehat{\um}$, and $widehat{D}$ and $\widehat{D}_0$, respectively. 
            \item The curves $UCB^- \subset \mathcal{T}^-$ and $UCB^+ \subset \mathcal{T}^+$, connecting the points $\widehat{D}_2$ and $\widehat{\um}$, and $\widecheck{D}$ and $ \widehat{D}_0$, respectively.
			\item The curves $GUB^- \subset \mathcal{T}^-$ and $GUB^+ \subset \mathcal{T}^+$ as in Lemma \ref{lem:transnu4bondaries} (4). 	
	\end{enumerate}
\end{lemma}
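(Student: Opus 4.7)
The plan is to derive this lemma as a direct specialization of Theorem \ref{thm:x1x2x32}, which is stated precisely for the regime $0<\nu_G\le 1$, and then to identify the four corners of the resulting boundary arcs from the explicit formulas in its proof. The pattern is parallel to Lemma \ref{lem:transnu4bondaries}, but the characteristic arcs now terminate at $\widehat{\um}$ (rather than at a fast double-contact pair $(\mathcal{Y}^G,\text{Y}^G)$), because such a pair does not exist when $\nu_G\le 1$ by Lemma \ref{lemm:DoubleContact}.

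The first step is to invoke Theorem \ref{thm:x1x2x32}: for every $M\in[D_0,D]$ there is a well-defined undercompressive segment $[\mathcal{F}(M),\mathfrak{S}(M)]\subset[G,\um)$, with $\mathfrak{S}(M)$ a slow characteristic shock satisfying $\sigma(\mathfrak{S};M)=\lambda_s(\mathfrak{S})$ and $\mathcal{F}(M)$ an undercompressive characteristic shock satisfying $\sigma(\mathcal{F};M)=\lambda_f(\mathcal{F})$. By parts (1) and (3) of Definition \ref{Def:TranBoun}, varying $M\in[D_0,D]$ therefore sweeps out the boundaries $SCB^\pm$ and $UCB^\pm$. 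The case $M\in(\um,D_0)$ was ruled out in the second possibility of the proof of Theorem \ref{thm:x1x2x32} (it would place $\mathfrak{S}$ on the same side of $\um$ as $M$), so no further arcs contribute; in particular there is no $FCB$ in this regime.

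The four endpoint computations are immediate from the formulas already in the text. At $M=D$ ($s_M=1$), formula \eqref{eq:x1x20} gives $s_{\mathfrak{S}(D)}=\nu_G/(\nu_G+2)=s_{D_1}$, and the admissible root of the quadratic \eqref{eq:x1x25A}--\eqref{eq:x1x25} gives $s_{\mathcal{F}(D)}=(\sqrt{\nu_G+1}-1)/\sqrt{\nu_G+1}=s_{D_2}$, so the $\pi^-$ endpoints are $\widehat{D}_1$ and $\widehat{D}_2$ while the $\pi^+$ endpoints are $\widehat{D}$ and $\widecheck{D}$. At $M=D_0$ ($s_M=1/2$), formula \eqref{eq:x1x20} simplifies to $s_{\mathfrak{S}(D_0)}=\nu_G/(\nu_G+1)=s_\um$; meanwhile the leading coefficient $A=(2s_M-1)(\nu_G+1)$ of \eqref{eq:x1x25} vanishes, collapsing \eqref{eq:x1x25A} to the linear equation $-(\nu_G+1)s_{\mathcal{F}}+\nu_G=0$, whose unique root is $s_{\mathcal{F}(D_0)}=s_\um$. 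At the common limit $\um$ both characteristic speeds equal $\lambda_\um$, so in the third coordinate the $SCB$ and $UCB$ agree, and both $SCB^-$ and $UCB^-$ close up at $\widehat{\um}$ while $SCB^+$ and $UCB^+$ close up at $\widehat{D}_0$, as asserted. The $GUB$ statement is independent of $\nu_G$, since Definition \ref{Def:TranBoun}(4) depends only on the single state $D$ and on the already-computed extrema $\mathcal{F}(D)=D_2$ and $\mathfrak{S}(D)=D_1$; hence it coincides with Lemma \ref{lem:transnu4bondaries}(4).

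The only substantive point I expect to require care is continuity of the map $M\mapsto \mathcal{F}(M)$ at the corner $M=D_0$, where the quadratic \eqref{eq:x1x25A} degenerates. This will be handled by observing that, as $s_M\to 1/2^+$, one root of the quadratic escapes to $+\infty$ while the physically relevant root tends to $-C/B=\nu_G/(\nu_G+1)=s_\um$; the sign analysis in the proof of Theorem \ref{thm:x1x2x32} already ensures that this is precisely the branch selected for $s_M\in(1/2,1]$, so the arc $\{\mathcal{F}(M)\colon M\in[D_0,D]\}$ is indeed continuous, closing up at $\um$. Everything else is bookkeeping on Definition \ref{Def:TranBoun} applied to the parametrization given by Theorem \ref{thm:x1x2x32}.
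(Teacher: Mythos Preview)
Your proposal is correct and follows essentially the same approach as the paper, which simply states that the result follows directly from Definition~\ref{Def:TranBoun} and Theorem~\ref{thm:x1x2x32} and notes the absence of an $FCB$. You provide considerably more detail---the explicit endpoint computations at $M=D$ and $M=D_0$ and the continuity argument for $\mathcal{F}(M)$ as the quadratic degenerates---but this is elaboration of the same route rather than a different one.
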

\begin{figure}[ht]
	\centering
	\subfigure[Surface of undercompressive shocks $\mathcal{T}$ for ${[G,D]}$, when $0<\nu_G\leq1$.]{\includegraphics[scale=0.3]{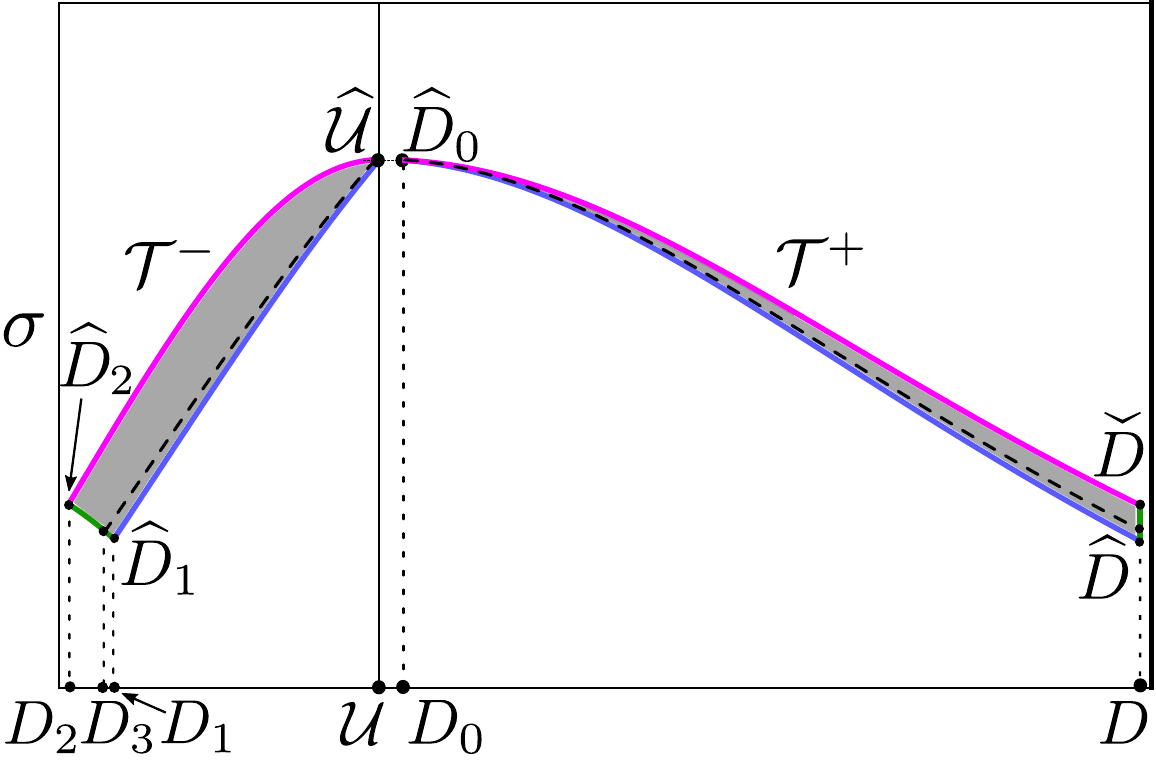}}\hspace{1.65mm}
	\subfigure[Surface of undercompressive shocks $\mathcal{T}$ for ${[G,D]}$, when ${(\nG^-)}^2/\nG>8$.]{\includegraphics[scale=0.3]{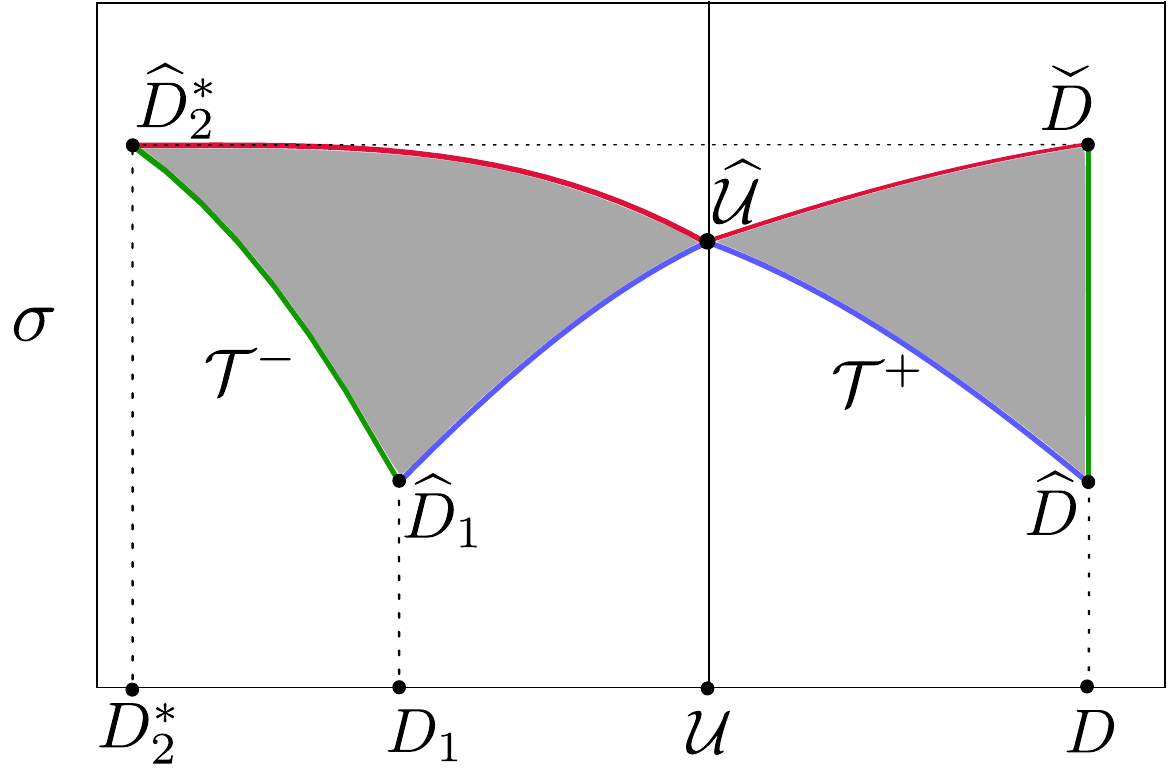}} 
	\caption{Undercompressive boundaries in Definition \ref{Def:TranBoun}: The blue curves are $SCB$, red curves are $FCB$, magenta curves are $UCB$, and green curves are $GUB$. The horizontal axis corresponds to ${[G,D]}$, and the vertical axis is the shock speed, see Remark \ref{rem:twowings}. The dashed curve is associated with the boundary for loss of compatibility of characteristic shocks; see Remark \ref{rem:transitionalBY1} and \protect\cite{Lozano2018}.}
	\label{fig:WCNonLocalLA}
\end{figure}
\begin{proof}
The justification follows directly from Definition \ref{Def:TranBoun} and from Theorem \ref{thm:x1x2x32}. Notice that there is no boundary of type $FCB$.
\end{proof}

\begin{remark}
	Notice in Lemma \ref{lem:transnu3bondariesmenor1} that there is a gap between states $\mathcal{U}$ and $D_0$, as shown in Fig.~\ref{fig:WCNonLocalLA}(a). This means that, in this scenario, the amplitude of undercompressive shocks is bounded away from zero. This fact has been noted before; see \cite{Dan2006}. 
    The solutions encompassing the segment between states $\um$ and $D_0$ use transitional rarefactions, as discussed in Section \ref{section:TransiRarefaction}.
\end{remark}

\begin{claim}
For $0<\nu_G\leq1$, let $D_0$ be the left $f$-extension of $\um$ as in Theorem \ref{thm:x1x2x32}. Then, for any right state $R$ in the segment $(\um,D_0]$, there is no left state $L$ along the invariant line $[G,D)$ such that an undercompressive shock can join $L$ and $R$ (see Fig.~\ref{fig:ComparateSpeedD0}).
\end{claim}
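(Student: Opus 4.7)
The plan is to argue by contradiction: suppose there exist $L\in[G,D)$ and $R\in(\um,D_0]$ on the invariant line joined by an undercompressive shock of speed $\sigma$. Using the reduction of Section~\ref{section:reduced}, the Rankine--Hugoniot relation becomes $\sigma=(f(s_R)-f(s_L))/(s_R-s_L)$, and along the line $\lambda_b(s)=f'(s)$ coincides with $\lambda_s$ for $s>s_\um$ and with $\lambda_f$ for $s<s_\um$.

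First I would rule out $s_L\in(s_\um,1]$, i.e.\ the case in which $L$ and $R$ sit on the same side of the umbilic point. In that configuration the crossing inequalities at both states read $\lambda_b<\sigma$, so the along-line eigenvalue $f'-\sigma$ of the traveling-wave ODE~\eqref{V9} (which reduces to the Buckley--Leverett ODE on the invariant line because $\bmm=I$) is strictly negative at both endpoints. Both equilibria are therefore stable along the line, and since $[G,D]$ is invariant under the full two-dimensional ODE any viscous-profile orbit between $L$ and $R$ is trapped on the line; but a scalar flow cannot connect two stable equilibria. Hence necessarily $s_L\in[0,s_\um)$.

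For the main case $s_L\in[0,s_\um)$, I would show that the crossing condition at $L$, namely $\sigma<\lambda_f(L)=f'(s_L)$, must fail. Let $T(s)=f(s_L)+f'(s_L)(s-s_L)$ denote the tangent to $f$ at $s_L$; then $\sigma>f'(s_L)$ is equivalent to $f(s_R)>T(s_R)$. Writing $g(s_L):=T(s_R)$ for fixed $s_R$, a direct differentiation gives $g'(s_L)=f''(s_L)(s_R-s_L)$. Since $\nu_G\leq 1$ forces $s_\um\leq s_\mathbb{I}$ by Remark~\ref{rem:inflection}, $f$ is convex on $[0,s_\um]$, so $g$ is strictly increasing there; together with $f(s_\um)=s_\um$ and $f'(s_\um)=\lambda_\um=2$ this yields $g(s_L)<g(s_\um)=2s_R-s_\um$ for every $s_L<s_\um$.

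It remains to show $f(s_R)\geq 2s_R-s_\um$ on $(s_\um,s_{D_0}]$. Setting $h(s)=f(s)-2s+s_\um$, one checks that $h(s_\um)=0$ and $h(s_{D_0})=f(1/2)-1+s_\um=0$ (using $f(1/2)=1/(1+\nu_G)=1-s_\um$). Its derivative $h'=f'-2$ vanishes at $s_\um$ and at a second point $s^*\in(s_\mathbb{I},s_{D_0})$, being positive on $(s_\um,s^*)$ and negative on $(s^*,s_{D_0})$ because $f'$ first rises to its maximum at $s_\mathbb{I}$ and then descends back through $2$; hence $h>0$ strictly in the interior and $h=0$ at $s_{D_0}$. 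Combined with the previous step, this gives $g(s_L)<f(s_R)$ and therefore $\sigma>f'(s_L)$, contradicting crossing at $L$. The main obstacle is this last boundary identity: the simultaneous vanishing of $h$ at $s_\um$ and at $s_{D_0}$ is the precise algebraic reason the interval $(\um,D_0]$ is the correct obstruction; geometrically it says that the tangent to $f$ at the umbilic point passes through $(s_{D_0},f(s_{D_0}))$, which is the same fact that caused $\mathfrak{S}$ and $\mathcal{F}$ in Theorem~\ref{thm:x1x2x32} to collapse to $\um$ as $R\to D_0$.
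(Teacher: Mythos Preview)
Your argument is correct, and it takes a genuinely different route from the paper's. The paper works directly from the speed diagram for $\mathcal{H}(R)$: it identifies (via Fig.~\ref{fig:ComparateSpeedD0}) the crossing segment $[\mathcal{F},\mathfrak{S}]$ on the primary branch, observes that for each candidate $M$ in that segment there is a third equilibrium $U$ of the traveling-wave system lying strictly between $M$ and $R$ on $[G,D]$ with the same shock speed, and concludes that the required orbit along the invariant line is blocked. The paper does not separately prove that crossing fails for $s_L<s_\um$; this is read off the diagram.

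You instead split cases and give an analytic proof in each. For $s_L>s_\um$ your observation that both saddles have their stable eigendirection along $[G,D]$ leads, via invariance of the line and uniqueness of orbits, to the same obstruction the paper uses (indeed, two along-line attractors force a repelling equilibrium in between, which is precisely the paper's blocking state $U$). For $s_L<s_\um$ you go further and show that the crossing inequality $\sigma<\lambda_f(L)=f'(s_L)$ cannot hold at all, using convexity of $f$ on $[0,s_\um]$ (valid since $s_\um\le s_{\mathbb{I}}$ when $\nu_G\le 1$) and the identity $h(s_\um)=h(s_{D_0})=0$ for $h(s)=f(s)-2s+s_\um$. This convexity argument is not in the paper and makes the proof independent of the speed diagram. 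One small point: your phrase ``any viscous-profile orbit between $L$ and $R$ is trapped on the line'' deserves the one-line justification you implicitly use---the stable manifold of $R$ lies entirely on $[G,D]$ because its stable eigendirection is along the invariant line---but the conclusion is sound.
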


\begin{figure}[ht]
	\centering
	\subfigure[Speed diagram: Primary branch {$[G,D]$}, for $\mathcal{H}(R)$ with $R\in{(\um,D_0]}$, when $0<\nu_G<1$.]{\includegraphics[scale=0.425]{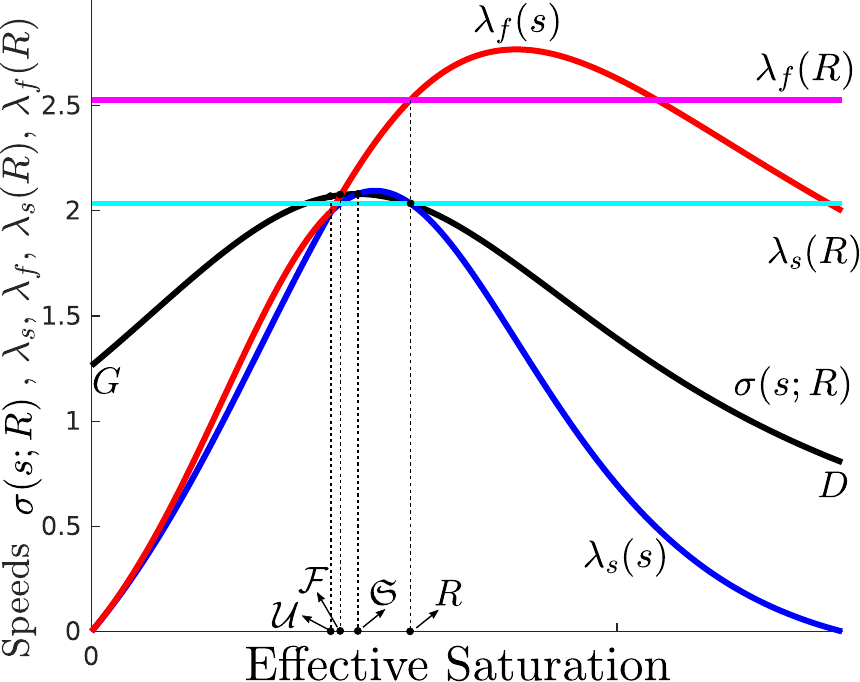}}\hspace{1.8mm}
		\subfigure[Speed diagram: zoomed segment {$[\mathcal{F},\mathfrak{S}]$} for $\mathcal{H}(R)$ with $R\in{(\um,D_0]}$.]{\includegraphics[scale=0.425]{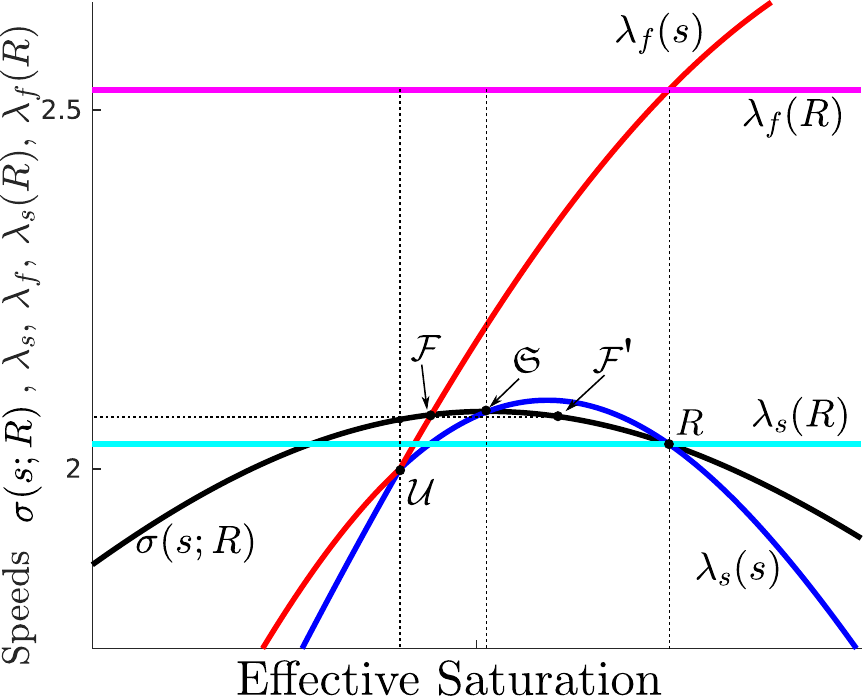}}
	\caption{Speed diagram for $\Hm(R)$, $R\in(\um,D_0]$. The horizontal axis corresponds to a parametrization of effective saturation, and the vertical axis shows speeds. The blue (resp. red) line is the characteristic speed $\lambda_s$ (resp. $\lambda_f$), while the black line is the shock speed $\sigma$. Horizontal cyan and magenta lines correspond to the constant values $\lambda_s(R)$, and $\lambda_f(R)$. States in the same dashed black line satisfy the triple shock rule; see \protect\cite{Azevedo2014}. Notice that $\mathfrak{S}$ is a Bethe-Wendroff state (\protect\cite{V.2015,Wendroff1972}) and for each state $M\in[\mathcal{F},\mathfrak{S}]$, there is a state in $U\in[\mathfrak{S},\mathcal{F}']$ that satisfies $\sigma(M;R)=\sigma(U;R)$. }
	\label{fig:ComparateSpeedD0}
\end{figure}

\begin{proof}
	Consider the Hugoniot curve $\Hm(R)$ for a right state $R$ in the segment $(\um,D_0)\subset[G,D]$. The Hugoniot curve for states along the invariant line is computed explicitly in \cite{L.2016}. Since the only possibility for the existence of an undercompressive shock between states $L$ and $R$ is that both lie in $[G,D]$, we only need to consider the primary branch $[G,D]$ of $\Hm(R)$. In Fig.~\ref{fig:ComparateSpeedD0}, we compare the characteristic speeds $\lambda_s(R)$,$\lambda_f(R)$, $\lambda_s(s)$ and $\lambda_f(s)$ with the shock speed $\sigma(R;s)$, as $s$ varies along the Hugoniot branch $[G,D]$, and identify the Bethe-Wendroff point (\protect\cite{V.2015,Wendroff1972}) $\mathfrak{S}$ and the undercompressive segment $[\mathcal{F},\mathfrak{S}]$. According to Fig.~\ref{fig:ComparateSpeedD0} there is a state $\mathcal{F}'\in[\mathfrak{S},R]$ such that $\sigma(\mathcal{F};R)=\sigma(\mathcal{F}';R)$ and $\mathcal{F}'$ is a repeller. Notice that for any state $M\in[\mathcal{F},\mathfrak{S}]$, there is a state $U\in[\mathfrak{S},\mathcal{F}']$ such that $\sigma(M;R)=\sigma(U;R)$. Since $U$ is between $M$ and $R$, no orbit is connecting $M$ to $R$ along the invariant line $[G,D]$. Therefore, there is no admissible undercompressive shock from $M$ to $R$ when $R\in(\um,D_0]$. 
\end{proof}

\begin{remark}\label{rem:transitionalBY1}
The undercompressive boundary associated with the boundary for loss of compatibility of characteristic shocks (see \cite{Lozano2018}) (shown in Figures \ref{fig:Transiplane1e8}-\ref{fig:WCNonLocalLA} as black dashed lines) is computed by varying $M\in[\text{X}^{G},D]$ and finding the states $U\in[\mathcal{F}(M),\mathfrak{S}(M)]$ with $\sigma(\mathfrak{S}(M);M)\leq\sigma(U;M)\leq \sigma(\mathcal{F}(M);M)$. In Fig.~\ref{fig:WCNonLocalLA}(b), we show the surface of undercompressive shocks for the case $(\nu_G^{-})^2/\nu_G >8$, in which this boundary does not appear because the state $\text{X}^{G}$ ceases to exist within the saturation triangle, see Lemma \ref{lemm:MixedContact}.   
\end{remark}

\section{Capillarity induced undercompressive shock surface}\label{section:GeneralMatrix}
In this section, we discuss how to construct numerically the surface of undercompressive shocks for the case $\bmm(U)\neq I$. We consider the general case where $\bmm(U)$ is defined as in \eqref{system6}-\eqref{PUprima}, which is associated with proper modeling of the physical diffusive effects caused by capillary pressures \cite{Abreu2014, Abreu2006,V.2002}. We verify that the surface of undercompressive shocks for this case has the same topological structure found in the case of $\bmm(U)= I.$ Finally, we perform numerical simulations comparing the solutions for identity and non-identity matrices.
  
\subsection {Saddle-saddle connections}\label{section:saddleconn}
When the viscosity matrix is not the identity, the secondary bifurcation locus is no longer made of invariant lines of the parabolic system \eqref{eq:sistem2}, and the admissible undercompressive shocks occur in regions away from these lines.

Schaeffer and Shearer in \cite{Schaeffer1987} studied the approximation of Corey flux function by a homogeneous quadratic system in a neighborhood of the umbilic point. For this quadratic approximation, \cite{L.1990,azevedo1995multiple} exhibits undercompressive shocks away from the bifurcation lines with two peculiarities: the left and right states lie within cones with vertices at the umbilic point, and the orbits joining these states occur along straight lines. In \cite{Dan2006}, Marchesin and Mailybaev showed that the undercompressive shocks are structurally stable under perturbations of the viscosity matrix.  

Recall that this work adopts the viscous profile criterion for shock admissibility. As a result, the admissibility of Riemann solutions that involve shocks is affected by perturbations of the viscosity matrix. 
Since these shocks are formed by saddles of \eqref{V9} that have a connecting orbit, we utilized the results in \cite{L.1990,azevedo1995multiple} for quadratic systems and look for connections near the umbilic point. 

Given a fixed state $U^-\in\Omega$, it will belong to the surface of undercompressive shocks $\mathcal{T}$ if we are able to find another state $U^+\in\Omega$  such that the conditions (1) and (2) are satisfied 
\begin{enumerate}
	\item[(1)] both $U^-$ and $U^+$ are saddle points for the associated traveling wave solution of the ODE system \eqref{V9}; 
\item[(2)] there is an orbit connecting $U^-$ to $U^+$. 
\end{enumerate}  
The Hugoniot locus for $U^-$ includes a curve arc composed of $U^+$ states, forming a crossing discontinuity with $U^-$. The point $U^-$ and points $U^+$ in the arc satisfy condition (1) (see Definition \ref{def:typeEqui}). This arc can be parameterized locally by the shock speed $\sigma$ so that we can refer to these points as $U^+(\sigma)$. To find $\sigma _0$ such that $U^+(\sigma _0)$ also satisfies (2) above, we vary $\sigma$ and then study the behavior of two orbits such that $U^-$ is the $\alpha$-limit and $U^+$ is the $\omega-$limit of the corresponding dynamical system \eqref{V9}; see \cite{Guckenheimer1986,Volpert1994}. If such $\sigma _0$ exists, these two orbits get closer as $\sigma$ approaches $\sigma_0$, and they coincide when $\sigma = \sigma _0$. 

Figure \ref{fig:sattrian2} shows how these orbits behave in practice.
Notice how the unstable manifold, colored violet, is located at the left side of the yellow stable manifold in Fig.~\ref{fig:sattrian2}(a), where $\sigma<\sigma_0$. As we increase $\sigma$, the distance between the two orbits decreases so that in Fig.~\ref{fig:sattrian2}(b), we see that the violet and yellow orbits coincide. If we keep increasing $\sigma$, the two orbits diverge, as in Fig.~\ref{fig:sattrian2}(c). In this case, the unstable manifold is located at the right of the yellow stable manifold. As a result, we can apply a binary search/bisection algorithm to approximate $\sigma _0$ by using the states in Figs.\ref{fig:sattrian2}(a) and Fig.~\ref{fig:sattrian2}(c). The distance between the violet and yellow orbits is measured by looking at their intersections with a conveniently chosen transversal line segment. When the procedure is successful, {\it i.e.,} when we find $(U^-,U^+,\sigma_0)\in \mathcal{T}$ such that there exists a connection between the saddle points $U^-$ and $U^+$, we take a perturbation state $U^{-}_*$ and repeat the process. We do this until we find the undercompressive regions in the state space; see \ref{Alg:1} for details. 
\begin{figure}[ht]
	\centering
	\subfigure[$\sigma<\sigma_0$.]{\includegraphics[scale=0.15]{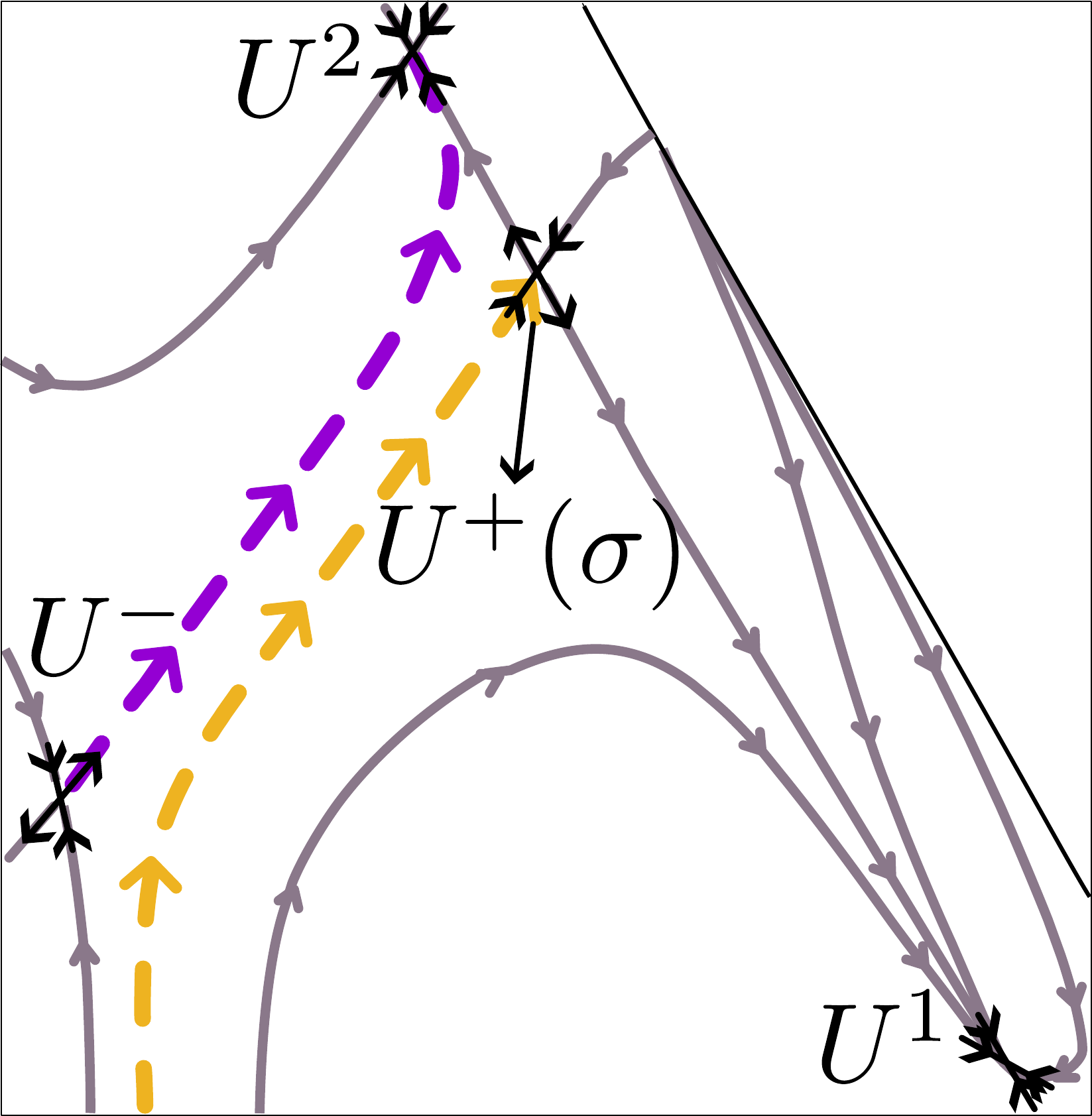}}\hspace{1.2mm}
	\subfigure[ $\sigma=\sigma_0$.]{\includegraphics[scale=0.15]{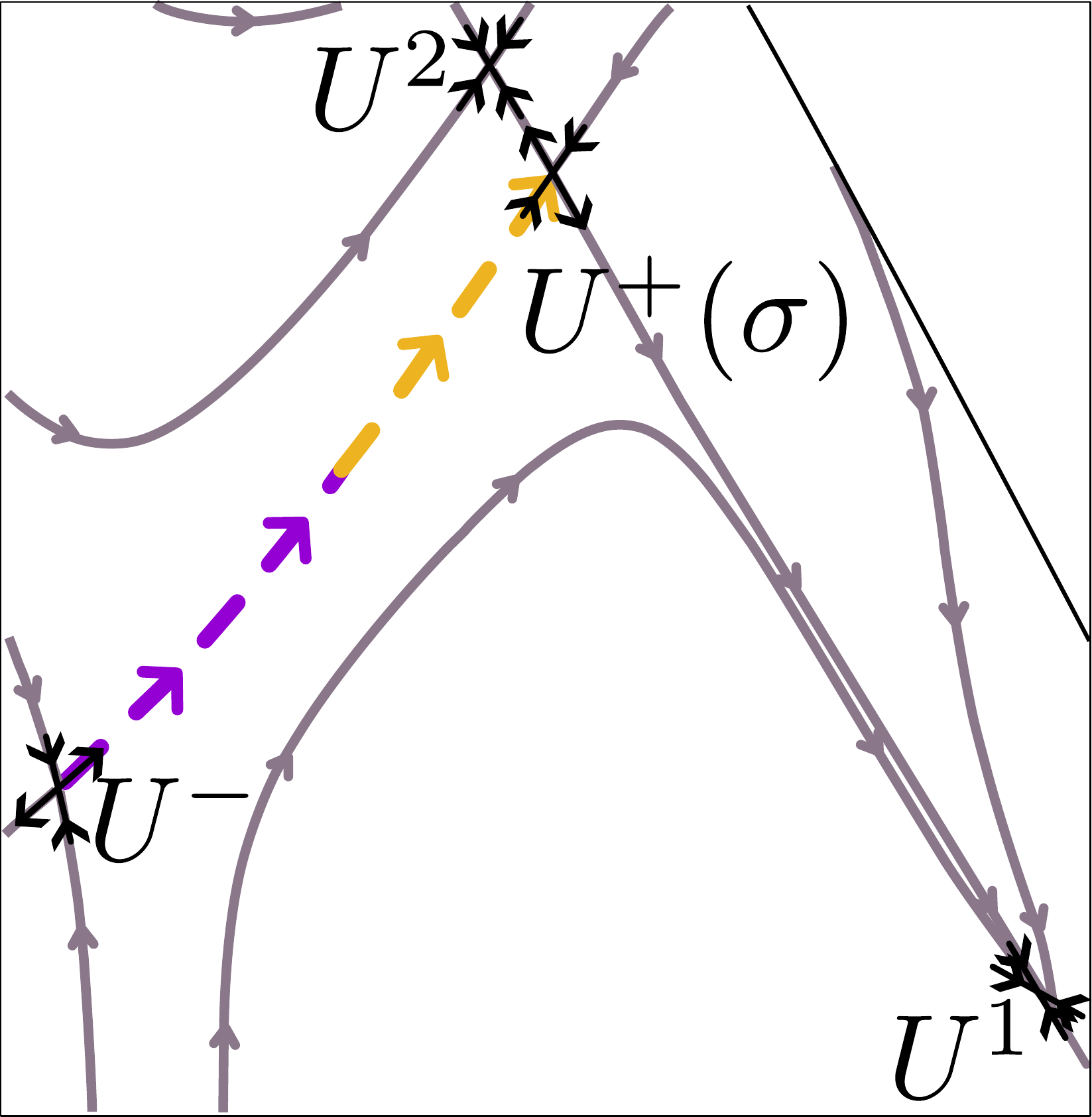}} \hspace{1.2mm}
	\subfigure[ $\sigma>\sigma_0$.]{\includegraphics[scale=0.15]{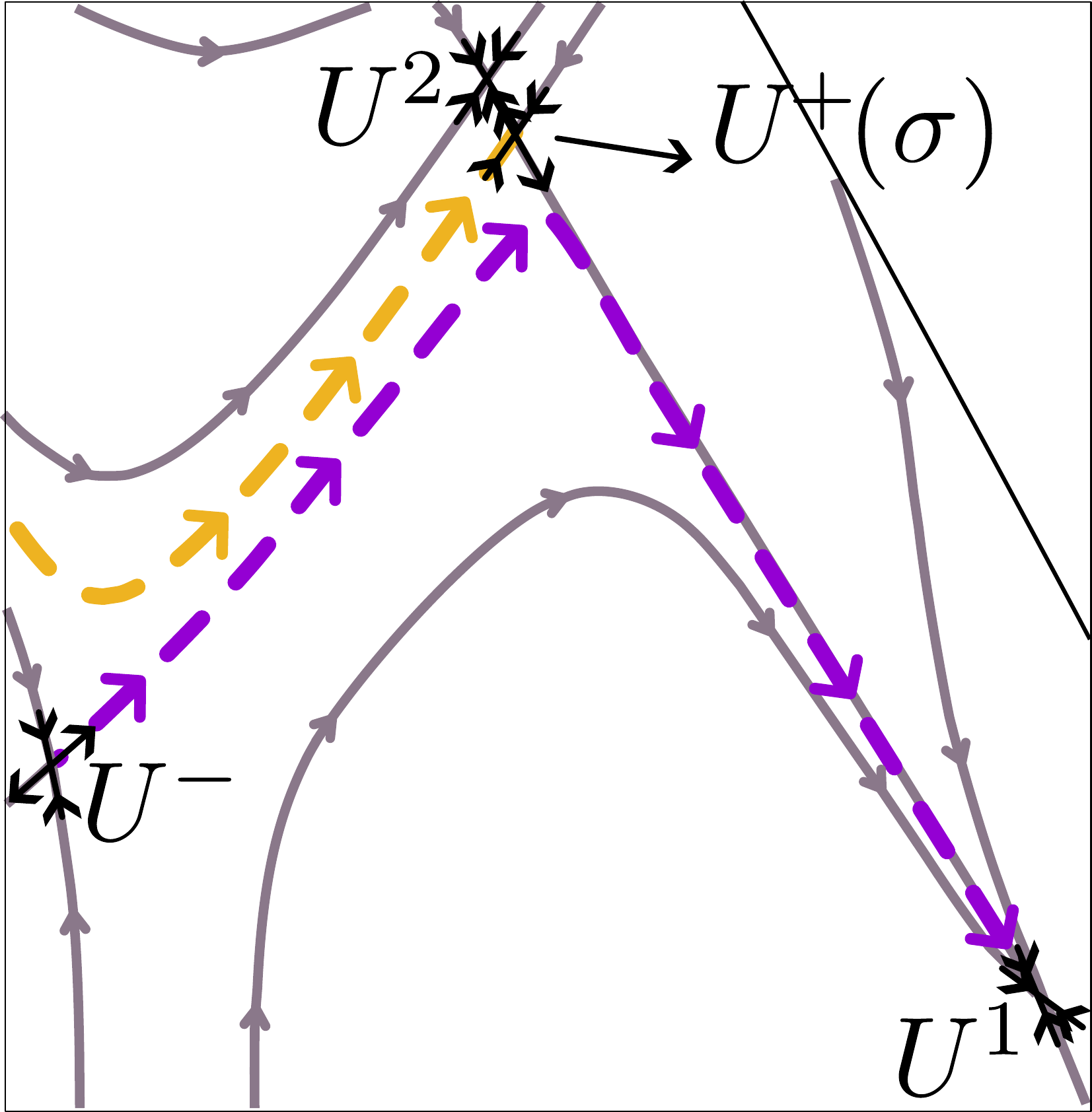}}
	\caption{Explanation to find connections between saddles. $U^-$ is a fixed state close to the umbilic point, and $U^+(\sigma) \in \mathcal{H}(U^-).$ The states $U^1$ and $U^2$ are other equilibria of the dynamical system \eqref{V9}, which also belong to $\mathcal{H}(U^-)$. The violet orbit is the unstable manifold for $U^-$, and the yellow orbit is the stable manifold for $U^+(\sigma)$ for the dynamical system associated with $U^-$ and $\sigma$. The gray orbits are other stable and unstable manifolds of the dynamical system. Here, we consider $\um\in II_O$.}
	\label{fig:sattrian2}
\end{figure}

\subsection{Saddle saddle-node connection.}\label{section:SnodeSconection}

As we described in the previous section, the surface of undercompressive shocks is constructed by finding an initial connection $(U^-,U^+,\sigma_0)$ and then by attempting to find connections for perturbed states $U^{-}_*$ of $U^{-}$. As long as this procedure is successful, we can repeat it until we cover the whole surface. When this procedure fails, there may exist intermediate states $U_b^-\in\Omega$ and $U_b^+\in\Omega$ matching one of the following cases: 
\begin{itemize}
    \item slow characteristic boundary: $U^-_b$ is a saddle-repeller and $U^+_b$ is a saddle (left-char.$\,s$-shock);
    \item fast characteristic boundary: $U^-_b$ is a saddle, and $U^+_b$ is a saddle-attractor (right-char.$\,f$-shock);
    \item undercompressive characteristics boundary: $U^-_b$ is a saddle-attractor and $U^+_b$ is a saddle (left-char.$\,u$-shock);
    \item genuine undercompressive boundary  : $U^-_b$ and $U^+_b$ are saddles, and $U^+_b\in\partial\Omega$;
    \item Other types of boundaries where the ones above meet, such as the umbilic point $\um$; the points $\mathscr{A}_1\in\{\mathscr{D}_1,\mathscr{E}_1,\mathscr{B}_1\}$ and $\mathfrak{A}_1\in\{\mathfrak{D}_1,\mathfrak{E}_1,\mathfrak{B}_1\}$, such that $\mathscr{A}_1\in\partial\Omega$ and $\mathfrak{A}_1$ lies in the $s$-right-extension of $\partial\Omega$; the points $\mathscr{A}_2\in\{\mathscr{D}_2,\mathscr{E}_2,\mathscr{B}_2\}$ and $\mathfrak{A}_2\in\{\mathfrak{D}_2,\mathfrak{E}_2,\mathfrak{B}_2\}$ such that $\mathscr{A}_2\in\partial\Omega$ and $\mathfrak{A}_2$ lies in the $f$-right-extension of $\partial\Omega$; and the points $\mathfrak{Y}^{\Gamma}$ and ${\mathscr{Y}}^{\Gamma}$, ${\Gamma}\in\{G,W,O\}$ that belong to the double fast contact locus. 
\end{itemize}
In what follows, we discuss only the procedure used to calculate saddle-node-to-saddle connections for states that live in slow characteristic boundaries. Algorithms for calculating the other types of boundaries are analogous.

We begin with the triple $(U^-, U^+, \sigma_{0})$ and a state $V^-\in \Omega$, so that $U^-$ and $V^-$ are close. A saddle-to-saddle connection exists between $U^-\in\mathcal{D}_{\mathcal{T}}$ and    $U^+\in\mathcal{D}'_{\mathcal{T}}$ ({\it i.e.}, \textit{inside} the undercompressive region). The state $V^-$ lacks a saddle-to-saddle connection ({\it i.e.}, \textit{outside} the undercompressive region); see Fig.~\ref{fig:sattrian3}(a). For the state $U^-$, we choose $\widetilde{\sigma}=\widetilde{\sigma}(U^-;\widetilde{U^+})$ so that $\widetilde{\sigma} = \lambda_s(U^-)$, $\widetilde{U^+}\in \Hm(U^-)$, $\widetilde{U^+}$ is a saddle, and $U^-$ becomes a saddle-repeller; see Fig.~\ref{fig:sattrian3}(b). Notice that $U^+$ and $\widetilde{U^+}$ belong to $\mathcal{H}(U^-)$.  We do the same for $V^-$ and select $\sigma_V=\sigma_V(V^-;V^+)$ to make $\sigma_V = \lambda_s(V^-)$, $V^+\in \Hm(V^-)$, $V^+$ a saddle, and $V^-$ becomes a saddle-repeller; see Fig.~\ref{fig:sattrian3}(c).  Notice that there may be more than one $\widetilde{U^+}$ and $V^+$ states satisfying these restrictions. However, since we assume $U^-$ and $V^-$ are located near the slow characteristic boundary, we select the ones closest to $U^+$. The described procedure defines $\widetilde{U^+}$ and $V^+$ and can be applied to determine opposite states to every intermediate state in the segment (red line) connecting $U^-$ to $V^-$; see Fig~\ref{fig:sattrian3}(c)-(d). One of these intermediate states is $B^-$, which yields a saddle-repeller-to-saddle connection with the state $B^+$ that satisfy $\lambda_s(B^-)=\sigma_B$, with $\sigma_B=\sigma_B(B^-;B^+)$. For this specific state, the unstable manifold of $B^-$ should coincide with the stable manifold of $B+$. This means that the triple $(B^-,B^+,\sigma_B)$ is in the slow characteristic boundary of $\mathcal{T}$.
Figure \ref{fig:sattrian3} illustrates the schematic form of the procedure to find the slow characteristic boundary. 

\begin{figure}[ht]
	\centering
	\subfigure[$U^-$, $U^+$ inside and $V^-$ outside of the undercompressive region. ]{\includegraphics[width=0.45\textwidth]{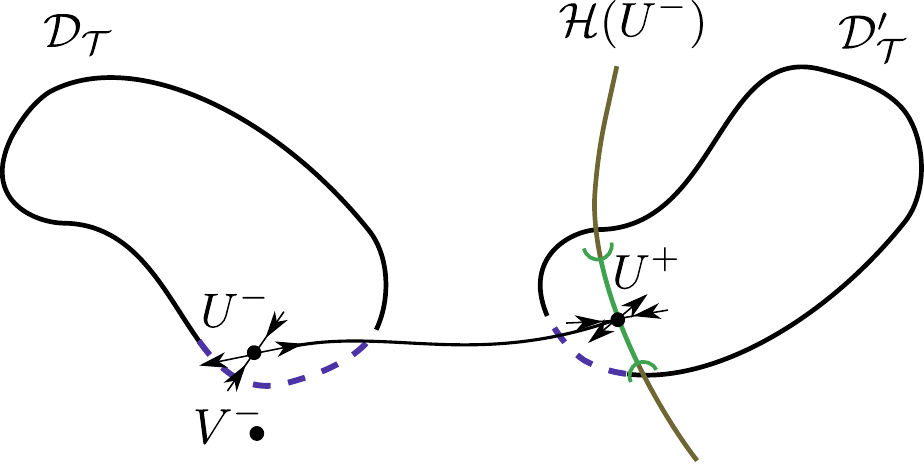}}\hspace{3.5mm}
	\subfigure[$U^-$ is a saddle-node and $\widetilde{U}^+\in\mathcal{H}(U^-)$ is a saddle. There is not an orbit connecting  them.]{\includegraphics[width=0.45\textwidth]{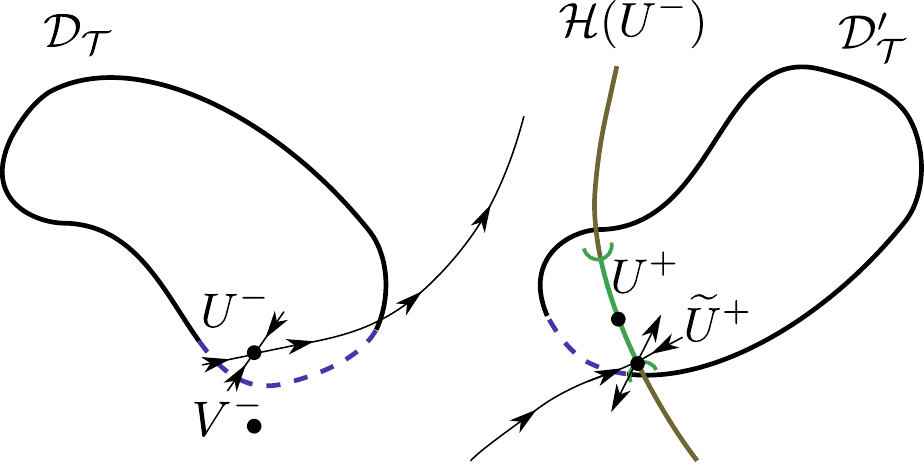}}
	\subfigure[$V^-$ is a saddle-node and $V^+\in\mathcal{H}(V^-)$ is a saddle. There is not an orbit connecting them.]{\includegraphics[width=0.45\textwidth]{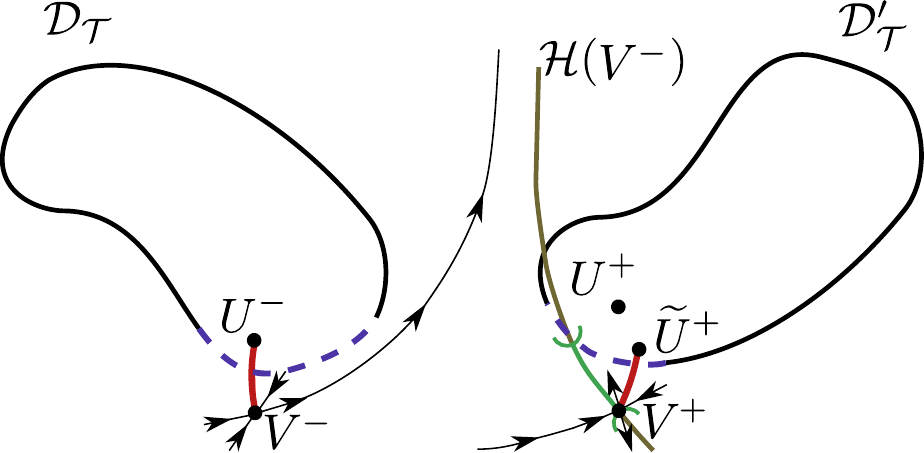}}\hspace{3.5mm}
 \subfigure[$(B^-,B^+,\sigma_{B})$ belongs to  the slow characteristic boundary of $\mathcal{T}$. ]{\includegraphics[width=0.45\textwidth]{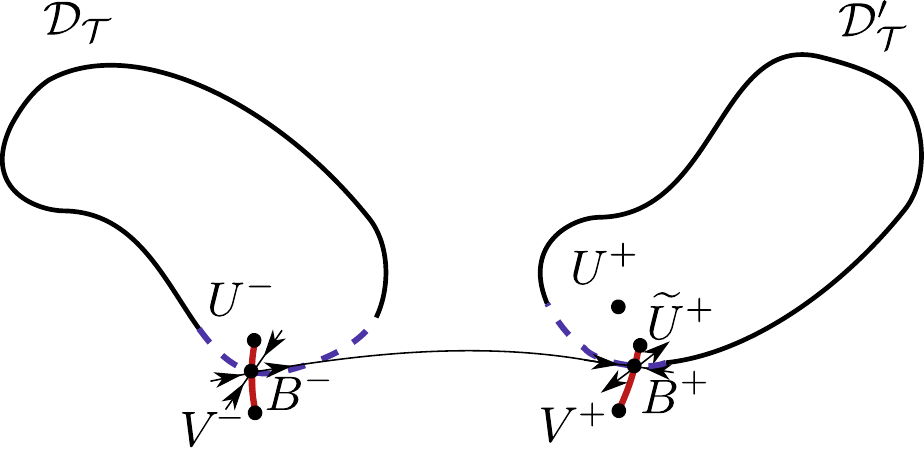}}
	\caption{Schematic procedure to find connections between saddle-node and saddle.
 }
	\label{fig:sattrian3}
\end{figure}

To approximate $B^-$, we proceed as in Section \ref{section:saddleconn} and get the intersection points of the stable and unstable manifolds with a conveniently chosen transversal line segment, deriving a vector that tells us the distance between the manifolds and their position relative to each other. We can use this information to start a binary search/bisection algorithm that approximates the desired points with a given accuracy target. We refer to \ref{Alg:2} for details.

When the above procedure fails, we can apply an alternative binary search algorithm that provides a tolerance-controlled approximation point to the undercompressive boundary point. The idea behind this alternative procedure is to make the successful saddle-to-saddle connection approach the unsuccessful one, and to do so, we consecutively test the middle state $\frac{1}{2}\left(U^-+U^{-}_*\right)$ for saddle-to-saddle connections, updating $U^-$ and $U^{-}_*$ in the process. We refer to \ref{Alg:3} for details.

\subsection{Undercompressive boundaries}

In the previous two sections, we have described how to calculate individual points of the interior and boundary of the undercompressive surface. The algorithms discussed in these two sections allow us to calculate saddle-to-saddle connections and saddle-to-saddle-node connections individually. Therefore, a natural way to calculate the boundary of the undercompressive surface is first to cover a portion of the state space with grid points, second to test them for saddle-to-saddle connections, and third to handle cases where two consecutive points in the grid satisfy the property that one has a saddle-to-saddle connection while the other one does not. In this final step, we apply one of the two algorithms described in Section \ref{section:SnodeSconection} to approximate the boundary point between these two consecutive states. This contour methodology requires using an $n$-dimensional grid to compute an $(n-1)$-dimensional space, which becomes computationally expensive as the number of grid points increases. Fortunately, the information obtained to construct one boundary point can be successively reused to calculate additional boundary points at a predefined target distance. This approach enables us to refine the undercompressive boundaries by controlling the target distance, effectively covering the $(n-1)$-dimensional space with an $(n-1)$-dimensional grid. Further details on this procedure can be found in \ref{ApendiceA}.

When calculating the undercompressive boundaries with $\bmm(U) \neq I$, we obtain the same boundaries as those discussed for $\bmm(U) = I$ (see Section \ref{section:TransSurface}). The key difference is that, in the first case, the undercompressive shocks form a two-dimensional projection region within the saturation triangle, whereas in the latter case, they are restricted to a one-dimensional space. Figure \ref{fig:windmill} illustrates the surface of undercompressive shocks $\mathcal{T}$ for the umbilic point of type $II$.

\begin{figure}[ht!]
	\centering
	\subfigure[Projection of the surface of undercompressive shocks in the saturation triangle.]{\includegraphics[width=0.415\textwidth]{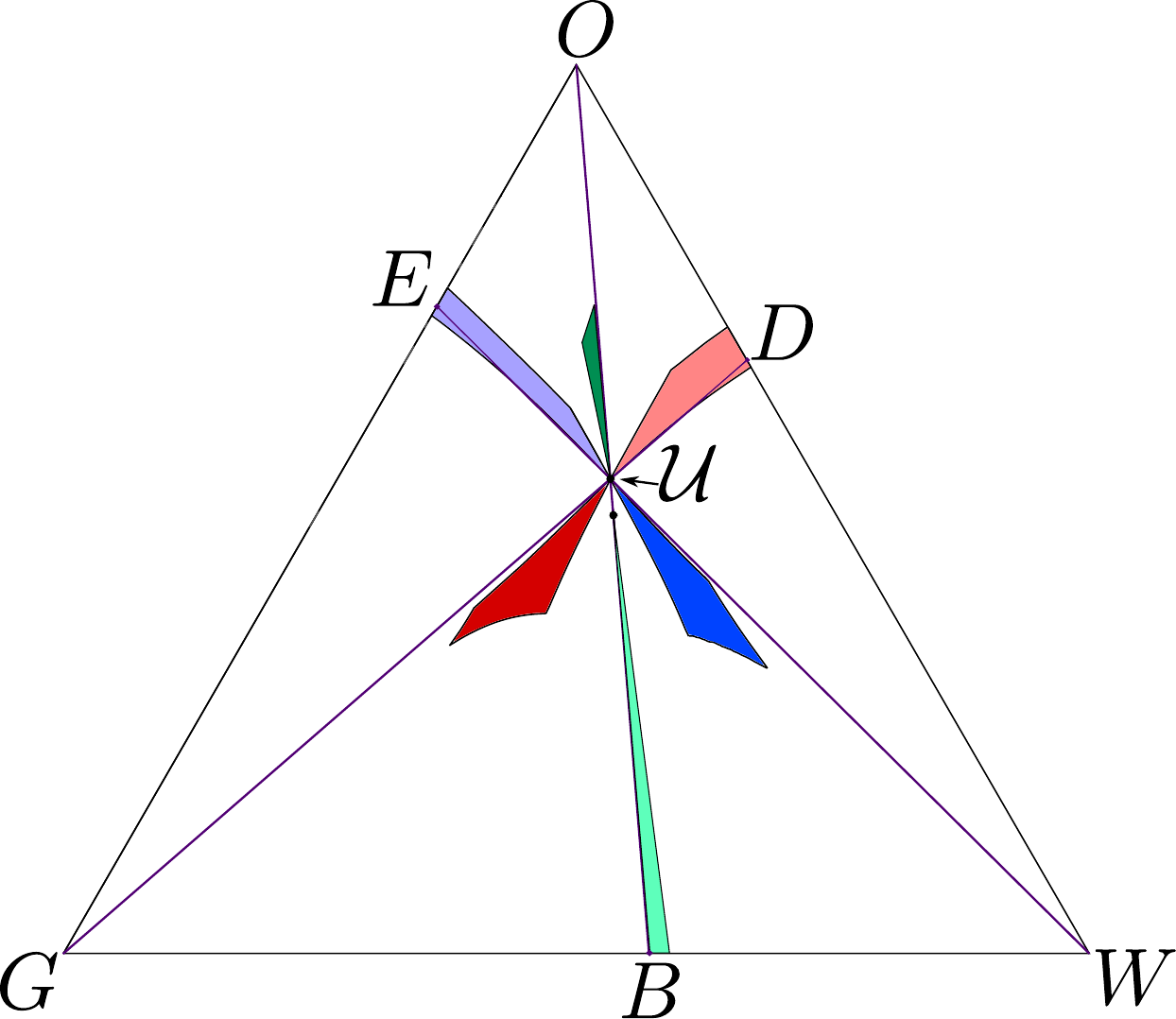}}  \hspace{0.50mm}
	\subfigure[Surfaces of undercompressive shocks associated with segments ${[G,D]}, {[W,E]}$ and ${[O,B]}$  in the 3-dimensional space.]{\includegraphics[width=0.55\textwidth]{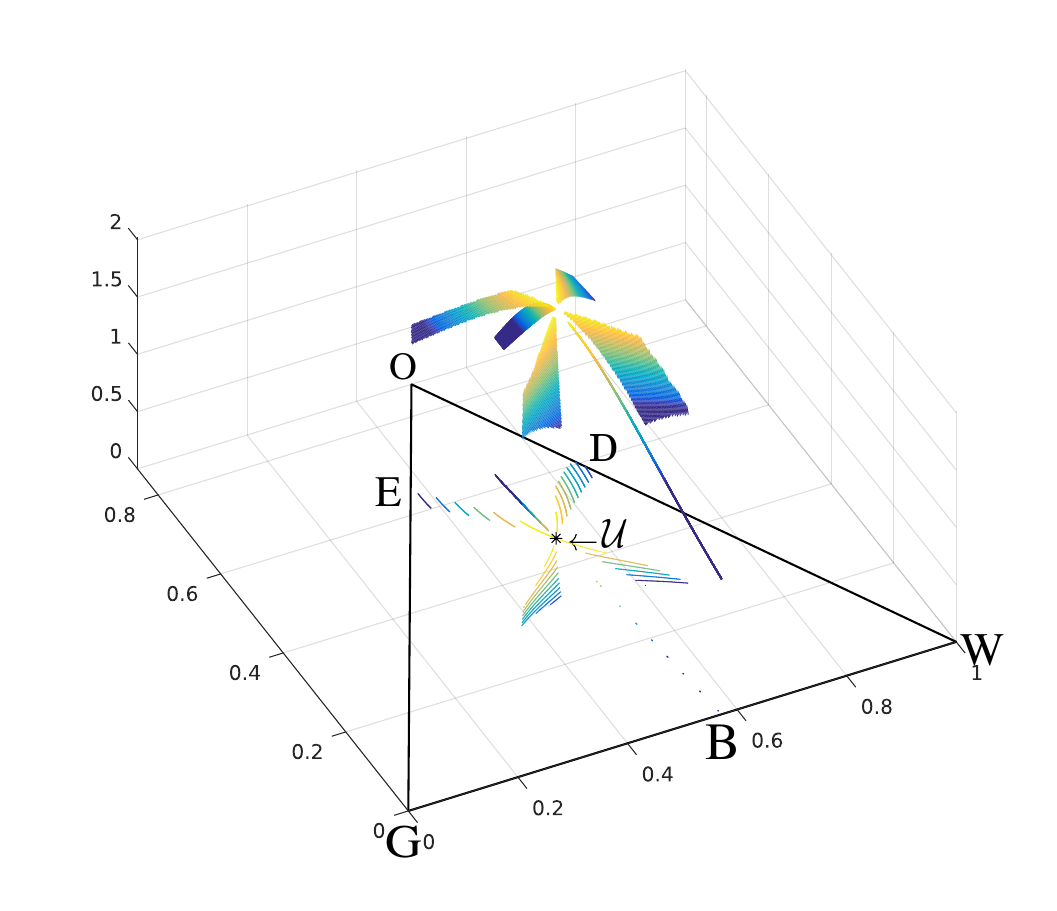}} 
	\caption{Surfaces of undercompressive shocks for umbilic point of type $II_O$ and $\bmm(U)\neq I$. (a) Dark red (dark blue, dark green) represents the domain $\mathcal{D}_\mathcal{T}$ and the light red (light blue, light green) represents the codomain $\mathcal{D}'_\mathcal{T}$. The size of the pair of green regions was modified so that it could be visible. (b) The $z$ axis represents the shock speed.}\label{fig:windmill}
\end{figure}

Now, we define the boundaries of the undercompressive shock surface for $\bmm(U) \neq I$. Similar to the case where $\bmm(U) = I$, we can associate a surface with the straight lines $[G, D]$, $[W, E]$, and $[O, B]$. Based on extensive numerical experiments, we conclude that the type and number of limits remain consistent with the case of the identity matrix. The cases can be enumerated as follows:
\begin{enumerate}
	\item {\bf Umbilic Point of Type $I$:} When the umbilic point is of type $I$, the three surfaces of the undercompressive shocks have four boundaries, analogous to those described in Definition \ref{Def:TranBoun}.
	\item {\bf Umbilic Point in Region $II_\Gamma$ (for $\Gamma \in \{G, W, O\}$):} The surface of undercompressive shocks associated with the segment $[\Gamma, \mathbb{B}]$, where $\mathbb{B} \in \{D, E, B\}$, has three boundaries and a gap between $\um$ and $\mathbb{B}_0 \in \{D_0, E_0, B_0\}$. This is similar to the case described in Lemma \ref{lem:transnu3bondariesmenor1} (note that in this case, $\nu_\Gamma < 1$).
	\item {\bf Umbilic Point in Region $II_\Gamma$, with $\nu_{\mathcal{A}} > 8$:} For $\Gamma \in \{G, W, O\}$, the surface of undercompressive shocks associated with the segment from vertex $\mathcal{A}$, where $\mathcal{A} \in \{G, W, O\}$, $\mathcal{A} \neq \Gamma$, and $\nu_{\mathcal{A}} > 8$, has three boundaries, similar to the case described in Lemma \ref{lem:transnu3bondariesmayor8}.  
	\item {\bf Umbilic Point in Region $II_\Gamma$, with $\nu_{\mathcal{A}} \leq 8$:} For $\Gamma \in \{G, W, O\}$, the surface of undercompressive shocks associated with the segment from vertex $\mathcal{A}$, where $\mathcal{A} \in \{G, W, O\}$, $\mathcal{A} \neq \Gamma$, and $\nu_{\mathcal{A}} \leq 8$, can have three or four boundaries. This depends on whether the points in the double contact, $\mathfrak{Y}^\Gamma$, and ${\mathscr{Y}}^\Gamma$, exist within $\Omega$. The states $\mathfrak{Y}^\Gamma$ and ${\mathscr{Y}}^\Gamma$ serve the same role as $\mathcal{Y}^\Gamma$ and $\text{Y}^\Gamma$ mentioned in Remark \ref{rem:DoubleContact}. See Fig.~\ref{fig:windmillA} for an illustration. This scenario is analogous to the cases described in Lemmas \ref{lem:transnu4bondaries} and \ref{lem:transnu3bondariesmayor8}.   
\end{enumerate}   
Notice that, as mentioned in Remark \ref{rem:twowings}, the surface of undercompressive shocks $\mathcal{T}$, shown in Fig.~\ref{fig:windmill}, reveals six regions when projected into $\Omega$ (or $\Omega \times \mathbb{R}$). The dark regions (red, blue, and dark green in Fig.~\ref{fig:windmill}(a)) represent the domain $\mathcal{D}_\mathcal{T}$, while the light regions (red, blue, and light green in Fig.~\ref{fig:windmill}(a)) represent the codomain $\mathcal{D}'_\mathcal{T}$. This color scheme enables precise localization of each pair of regions ($\mathcal{D}_\mathcal{T}$ and $\mathcal{D}'_\mathcal{T}$) with the same color, which are connected by $u$-shock waves. Furthermore, it facilitates the correlation between these region pairs ($\mathcal{D}_\mathcal{T}$ and $\mathcal{D}'_\mathcal{T}$) and the segments $[G, D]$, $[W, E]$, and $[O, B]$. 
Figure~\ref{fig:windmillA} illustrates $\mathcal{D}_\mathcal{T}$ and $\mathcal{D}'_\mathcal{T}$ of the surface of undercompressive shocks associated with $[G, D]$, identifying the limiting states $\mathfrak{D}_1$, ${\mathscr{D}}_1$, $\mathfrak{D}_2$, ${\mathscr{D}}_2$, $\mathfrak{Y}^{G}$, and ${\mathscr{Y}}^{G}$. Additionally, the states $D_1$, $D_2$, $\mathcal{Y}^{G}$, and $\text{Y}^{G}$, which define the surface in the case $\bmm(U) = I$, are also shown. The boundaries of $\mathcal{D}_\mathcal{T}$ and $\mathcal{D}'_\mathcal{T}$ associated with $[G, D]$ projected in $\Omega$ are as follows:
\begin{figure}[ht]
	\centering
	\includegraphics[width=0.45\textwidth]{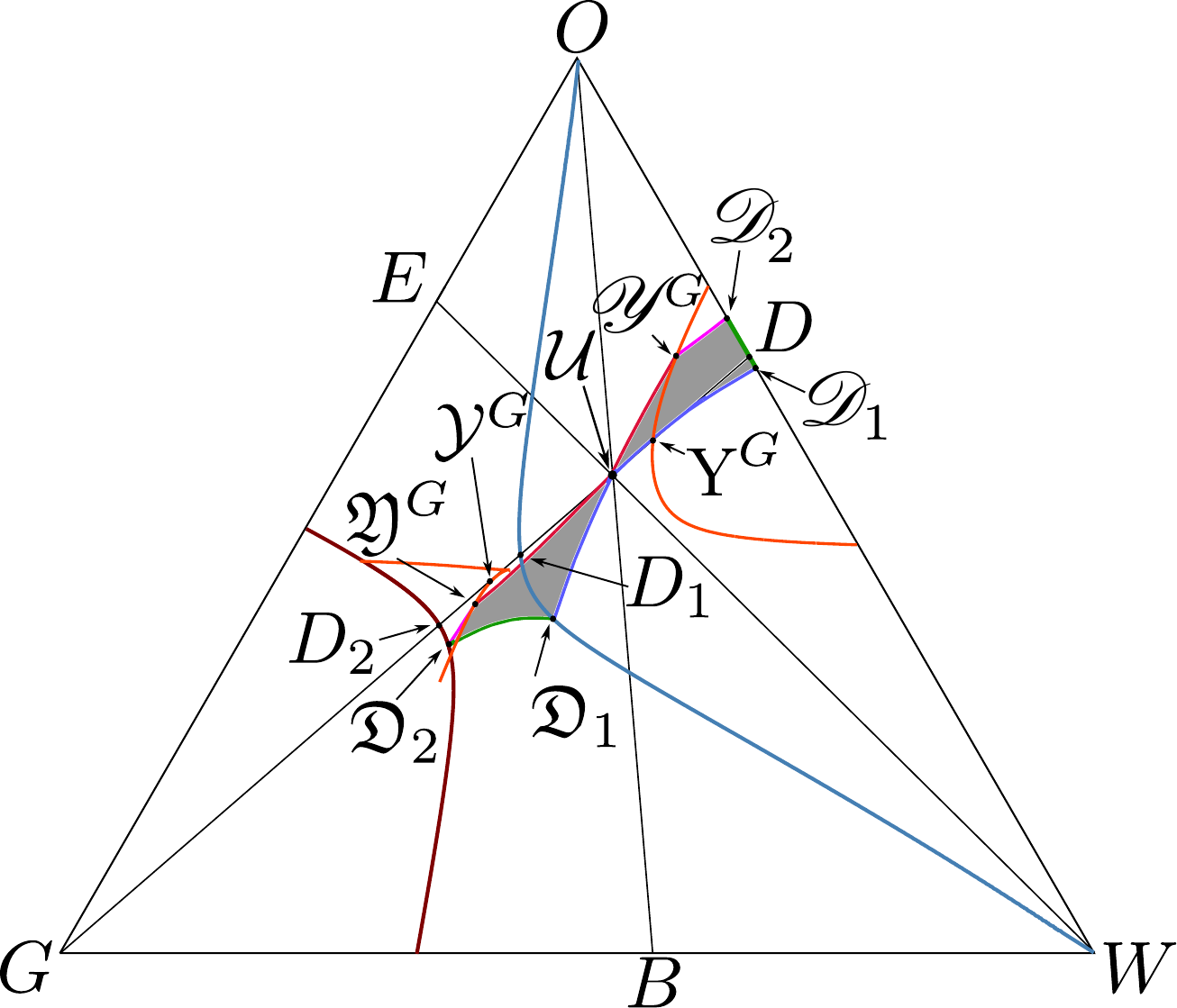}
	\caption{Projection of the surface of undercompressive shocks on $\Omega$ (shaded region). The states $D_1$, $D_2$, $\mathcal{Y}^{G}$, and $\text{Y}^{G}$ lie on $[G,D]$. These states define the surface of undercompressive shocks when $\bmm(U)=I$, see Section \ref{section:reduced}. The curve $[O, D_1,\mathfrak{D}_1, W]$ is the $s$-extension of $[O, W]$, the curve that crosses $[D_2,\mathfrak{D}_2]$ is the $f$-extension of $[O, W]$, and the curve segments $[{\mathscr{Y}}^{G}, \text{Y}^{G}]$ and $[\mathfrak{Y}^G, \mathcal{Y}^{G}]$ are part of the fast double contact locus (orange curves). The states ${\mathscr{D}_1}$ and ${\mathscr{D}_2}$ lie on $[O,W]$ and are associated with $\mathfrak{D}_1$ and $\mathfrak{D}_2$ such that $\sigma(\mathfrak{D}_1,{\mathscr{D}_1})=\lambda_s(\mathfrak{D}_1) $ and $\sigma(\mathfrak{D}_2,{\mathscr{D}_2})=\lambda_f(\mathfrak{D}_2) $. }	
 \label{fig:windmillA}
\end{figure}
\begin{itemize}
		\item A \textit{slow characteristic boundary} $ (SCB),$ defined by the set of states $ U^- \in [\mathfrak{D}_1, \um] $, each with a corresponding state $ U^+ \in [\um, \mathscr{D}_1] $, such that there is an admissible left-characteristic $ s $-shock between $ U^- $ and $ U^+ $, with $ \sigma(U^-; U^+) = \lambda_s(U^-) $. The state $ \mathscr{D}_1 $ lies on $ [W, O] $, and its corresponding state $ \mathfrak{D}_1 $ lies in the $ s $-right-extension of $ [W, O] $.
		\item A \textit{fast characteristic boundary} $ (FCB),$ defined by the set of states $ U^- \in [\mathfrak{Y}^G, \um] $, each with a corresponding state $ U^+ \in [\um, {\mathscr{Y}}^G] $, such that there is an admissible right-characteristic $ f $-shock between $ U^- $ and $ U^+ $, with $ \sigma(U^-; U^+) = \lambda_f(U^+) $. The states $ \mathfrak{Y}^G $ and $ {\mathscr{Y}}^G $ lie on the double fast contact locus.
  		\item An \textit{undercompressive characteristic boundary} $ (UCB),$ defined by the set of states $ U^- \in [\mathfrak{Y}^G, \mathfrak{D}_2] $, each with a corresponding state $ U^+ \in [{\mathscr{Y}}^G, {\mathscr{D}}_2] $, such that there is an admissible left-characteristic $ u $-shock between $ U^- $ and $ U^+ $, with $ \sigma(U^-; U^+) = \lambda_f(U^-) $. The states $ \mathscr{D}_2 $ and $ \mathfrak{D}_2 $ correspond to $ [W, O] $ and its $ f $-right-extension, respectively.
		\item A \textit{genuine undercompressive boundary} $ (GUB) $ defined by the set of states $ U^- \in (\mathfrak{D}_1, \mathfrak{D}_2) $, each with a corresponding state $ U^+ \in (\mathscr{D}_1, \mathscr{D}_2) $, such that there is an admissible undercompressive shock between $ U^- $ and $ U^+ $.
\end{itemize} 
In Fig.~\ref{fig:windmillB}, we compare the projection of the surface of undercompressive shocks on the plane $[G,D]\times \mathbb{R}^+$ for the case $\bmm(U)\neq I$ with the case $\bmm(U)= I$.    
Notice that the structure of these boundaries is the same as those defined for the identity matrix; see Definition \ref{Def:TranBoun}.

\begin{figure}[ht]
	\centering
	\subfigure[Case $\bmm(U)=I$.]{\includegraphics[width=0.45\textwidth]{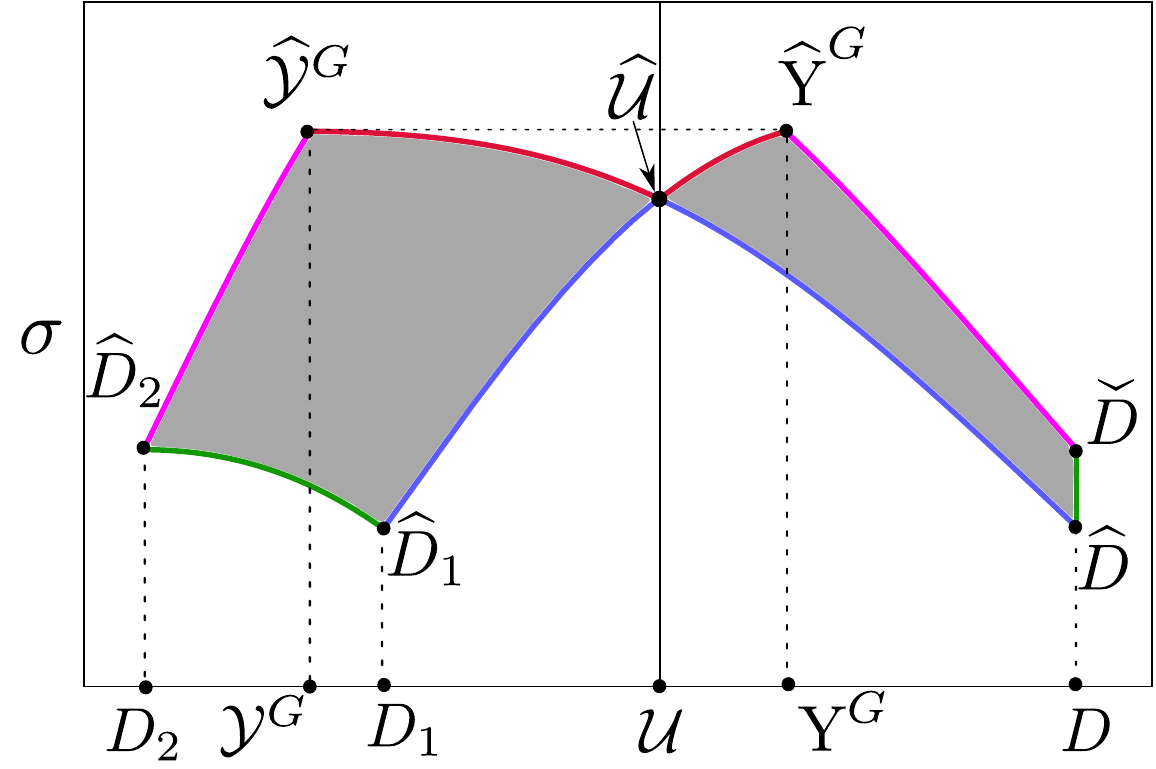}}  \hspace{0.5mm}
	\subfigure[Case $\bmm(U)\neq I$.]{\includegraphics[width=0.45\textwidth]{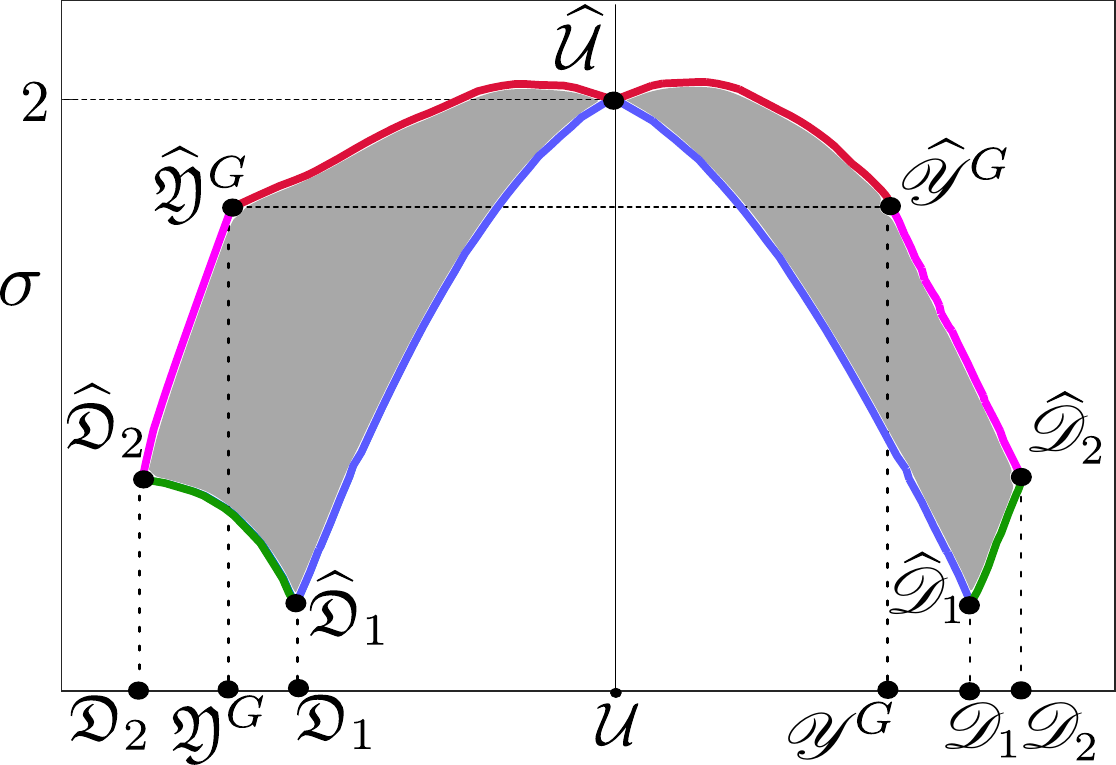}} 
	\caption{Projection of the surface of undercompressive shocks on the plane $[G,D]\times \mathbb{R}^+$ for identity and non-identity matrices. (a) The states $D_1,D_2,\mathcal{Y}^{G}$, and $\text{Y}^{G}$ lie on $[G,D]$. These states define the surface of undercompressive shocks when $\bmm(U)=I$, see Section \ref{section:TransSurface}. (b) The states $\mathfrak{D_1},\mathfrak{D_2},\widehat{\mathfrak{D_1}}, \widehat{\mathfrak{D_2}},\mathfrak{Y}_2^G$, and $\widehat{\mathfrak{Y}}_2^G$ lie inside the saturation triangle. The blue curves are slow characteristic boundaries $(SCB)$, red curves are fast characteristic boundaries $(FCB)$, pink curves are undercompressive characteristic boundaries $(UCB)$, and green curves are genuine undercompressive boundaries $(GUB)$.    }	\label{fig:windmillB}
\end{figure}
\subsection{Simulations}
In this section, we present numerical simulations illustrating the effect in the solution when the matrix $\bmm(U)\neq I$ is used. We chose the nonlinear Crank-Nicolson implicit finite-difference scheme, using Newton\textquotesingle s method to perform numerical simulations for the system \eqref{eq:sistem2}; see \cite{lambert2020mathematics}. This scheme is second-order accurate in space and time.  We consider umbilic point type $II_O$ with $\mu_w = 1.0, \mu_o = 2.0$, and $\mu_g=0.75$. We also consider the viscosity matrix defined in \eqref{system6}-\eqref{PUprima} with $c_{ow} = c_{og}=1$.

\begin{figure}[ht]
	\centering
	\subfigure[Case $\bmm(U)=I$. ]{\includegraphics[width=0.475\textwidth]{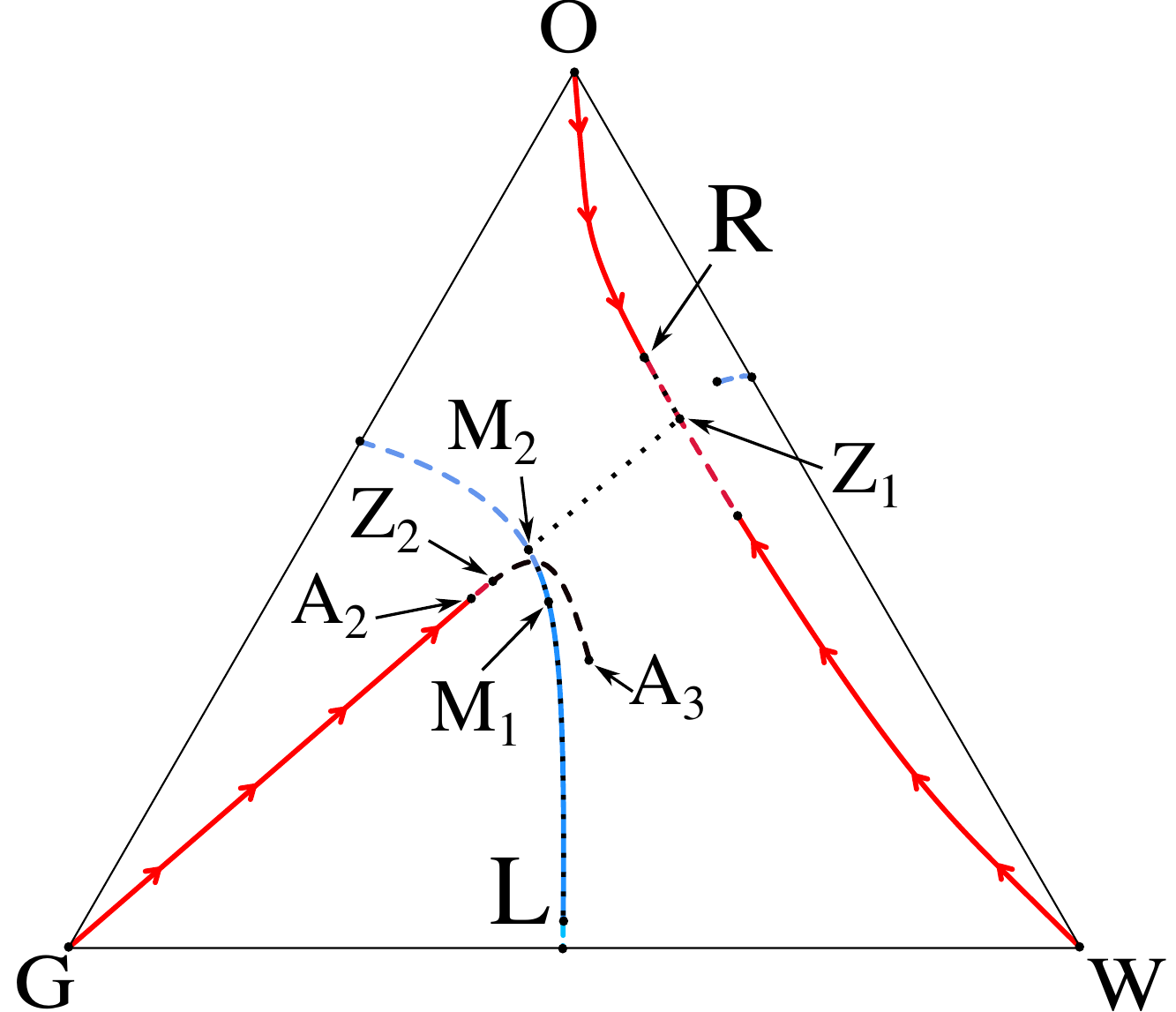}}
     \hspace{1pt}
     \subfigure[Case $\bmm(U)\neq I$.]{\includegraphics[width=0.475\textwidth]{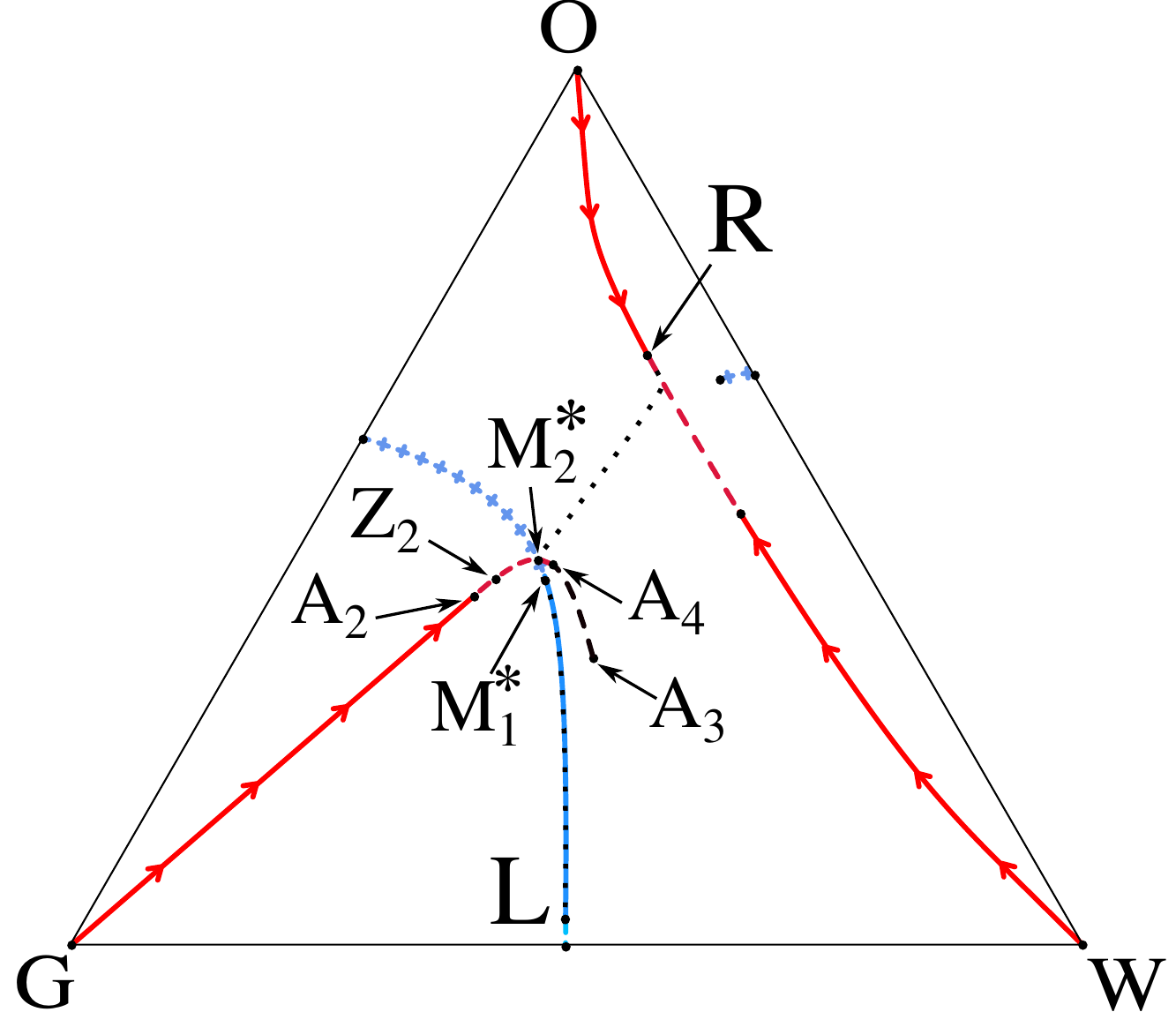} }
\caption{Solution of the Riemann problem for left state $L=(0.475708,0.02608)^T$ and right state $R=(0.235578, 0.670876)^T$. We present the solution using the wave curve method \protect\cite{V.2010}  (analytic solution) and the numerical solution (simulation using RCD \protect\cite{lambert2020mathematics}). The red curves represent $\wm_f^-(R)$; the blue curves, $\wm_s^+(L)$, see Section \protect\ref{sec:wavecurve}. The black dotted curve is the numerical simulation using RCD. (a) The dashed curve $[A_2,A_3]$ is a nonlocal fast shock segment of $\Hm(R)$, but for $\bmm(U)=I$. The segment $[A_2, Z_2]$ has viscous profile. (b)  The dashed curve $[A_2,A_3]$ is a nonlocal fast shock segment of $\Hm(R)$, but for $\bmm(U)\neq I$, the segment $[A_2,A_4]$ has viscous profile.   }
	\label{fig:SolSaturationTriangle}
\end{figure}

We first consider the case where $\bmm(U) = I$. Figure \ref{fig:SolSaturationTriangle}(a) displays the backward fast wave curve $\wm_f^-(R)$ and the forward slow wave curve $\wm_s^+(L)$ with $L = (0.475708, 0.02608)^T$ and $R = (0.235578, 0.670876)^T$; see Section \ref{sec:wavecurve}. Based on \cite{Lozano2018, Lozano2020pro}, the solution of the Riemann problem comprises the sequence:  
$$L  \xrightarrow{R_s} M_1 \xrightarrow{\CS_s} M_2 \xrightarrow{S_u} Z_1 \xrightarrow{S_f} R,$$
which consists of an $s$-rarefaction wave from $L$ to $M_1$, followed by an $s$-shock connecting $M_1$ and $M_2$, followed by a $u$-shock from $M_2$ to $Z_1$, and finally, an $f$-shock connecting $Z_1$ and $R$.  
This solution contains three wave groups, including an undercompressive shock between the states $M_2$ and $Z_1$ on the invariant segment $[G, D]$. Figure \ref{fig:SolSaturationTriangle}(a) also shows the nonlocal shock segment $[A_2, A_3] \subset \Hm(R)$. This curve intersects the invariant segment $[G, D]$ at $Z_2$, such that the shock segment $[A_2, Z_2] \subset [A_2, A_3]$ is admissible, whereas the segment $(Z_2, A_3]$ is not.  

Figure \ref{fig:SimulationTran}(a) illustrates the profiles for saturations $S_w$ and $S_o$ obtained from the numerical simulation. The simulation uses $\Delta x = 0.01$, $\Delta t = 0.01$, a final time of $T_f = 100$, and a small amount of artificial diffusion, $\epsilon = 0.005$. Note that the three wave groups and the two constant states of this solution are well-represented in the simulation results.

Next, we consider the case where the viscosity matrix $\bmm(U) \neq I$. Figure \ref{fig:SolSaturationTriangle}(b) shows the backward fast wave curve $\wm_f^-(R)$ and the forward slow wave curve $\wm_s^+(L)$ for the previously defined states $R$ and $L$. In this case, the admissibility of the nonlocal fast shock segment $[A_2, A_3] \subset \Hm(R)$ changes. Specifically, the shock segment $[A_2, Z_2, A_4] \subset [A_2, A_3]$ becomes admissible when $\bmm(U) \neq I$. Furthermore, there is an intersection between $\wm_s^+(L)$ and $\wm_f^-(R)$ at the point $M_2^*$, which satisfies the following:
\begin{itemize}
	\item The state $M_2^*$ lies on the $s$-composite segment of $\wm_s^+(L)$. Thus, there exists a state $M_1^*$ in the $s$-rarefaction segment of $\wm_s^+(L)$ such that $\sigma_s = \sigma(M_1^*; M_2^*) = \lambda_s(M_1^*)$.  
	\item The state $M_2^*$ lies on the $f$-shock segment of $\wm_f^-(R)$ with $\sigma_f = \sigma(M_2^*; R)$, where this shock has a viscous profile, and $\sigma_s < \sigma_f$.  
\end{itemize} 
The solution to the Riemann problem in this case is given by (see \cite{Lozano2018} for details):  
$$
L \xrightarrow{R_s} M_1^* \xrightarrow{\CS_s} M_2^* \xrightarrow{S_f} R,
$$  
which consists of an $s$-rarefaction wave from $L$ to $M_1^*$, followed by an $s$-shock connecting $M_1^*$ and $M_2^*$, and finally, an $f$-shock from $M_2^*$ to $R$.  

Therefore, when we change the viscosity matrix, the solution only uses an intermediate state, $M_2^*$, and the undercompressive shock utilized when $\bmm(U)=I$ vanishes. Figure \ref{fig:SimulationTran}(b) shows the saturation profiles $S_w$ and $S_o$ obtained from the numerical simulation for this case. We consider $ \Delta x = 0.01 $, $ \Delta t = 0.01 $, final time $ T_f = 100 $, and the matrix given by the capillary effects.

\begin{figure}
	\centering
	\subfigure[Solution profile case $\bmm(U)=I.$]{\includegraphics[width=0.4\textwidth]{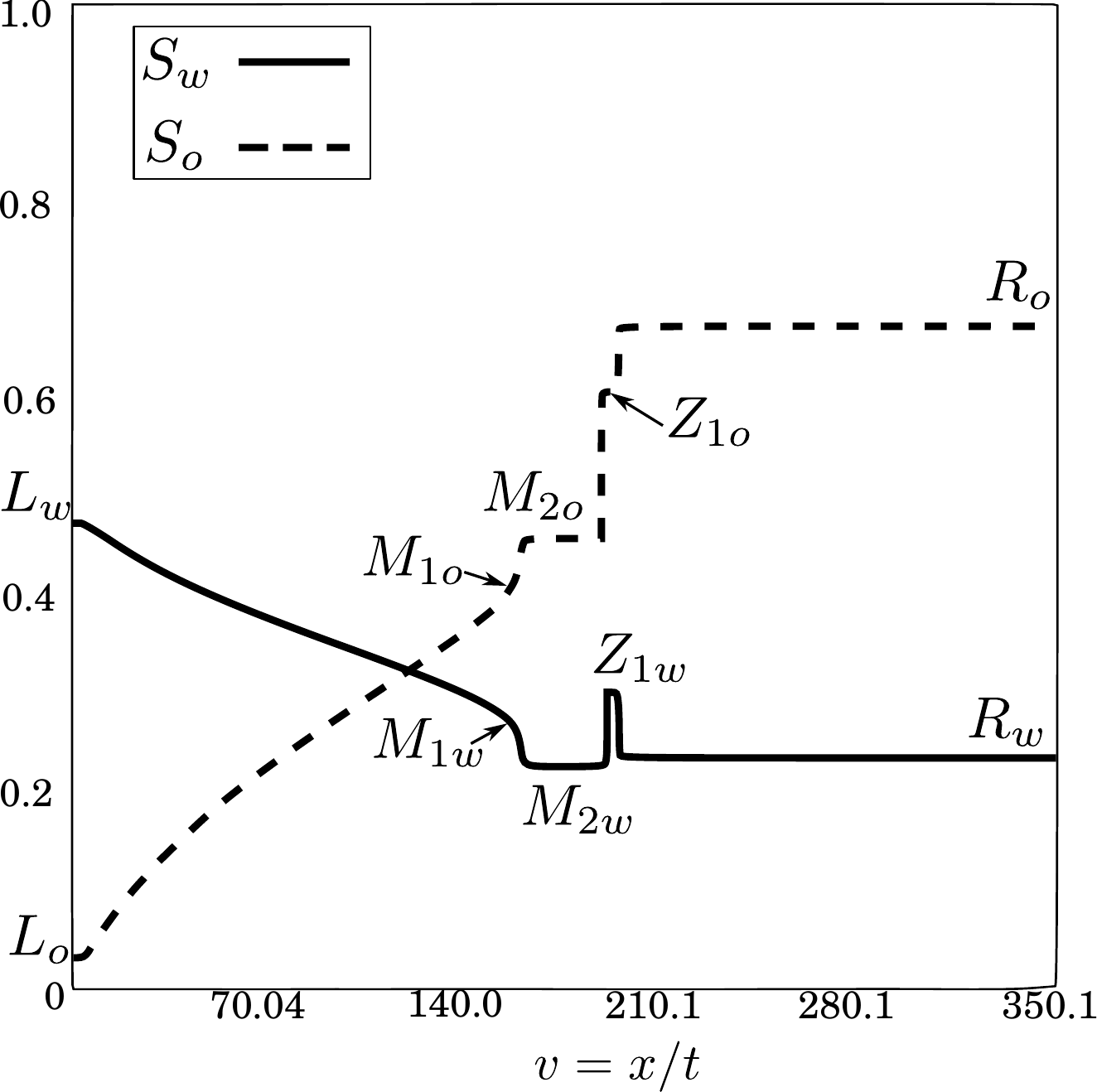}}  \hspace{0.65mm}
	\subfigure[Solution profile case $\bmm(U)\neq I.$]{\includegraphics[width=0.4\textwidth]{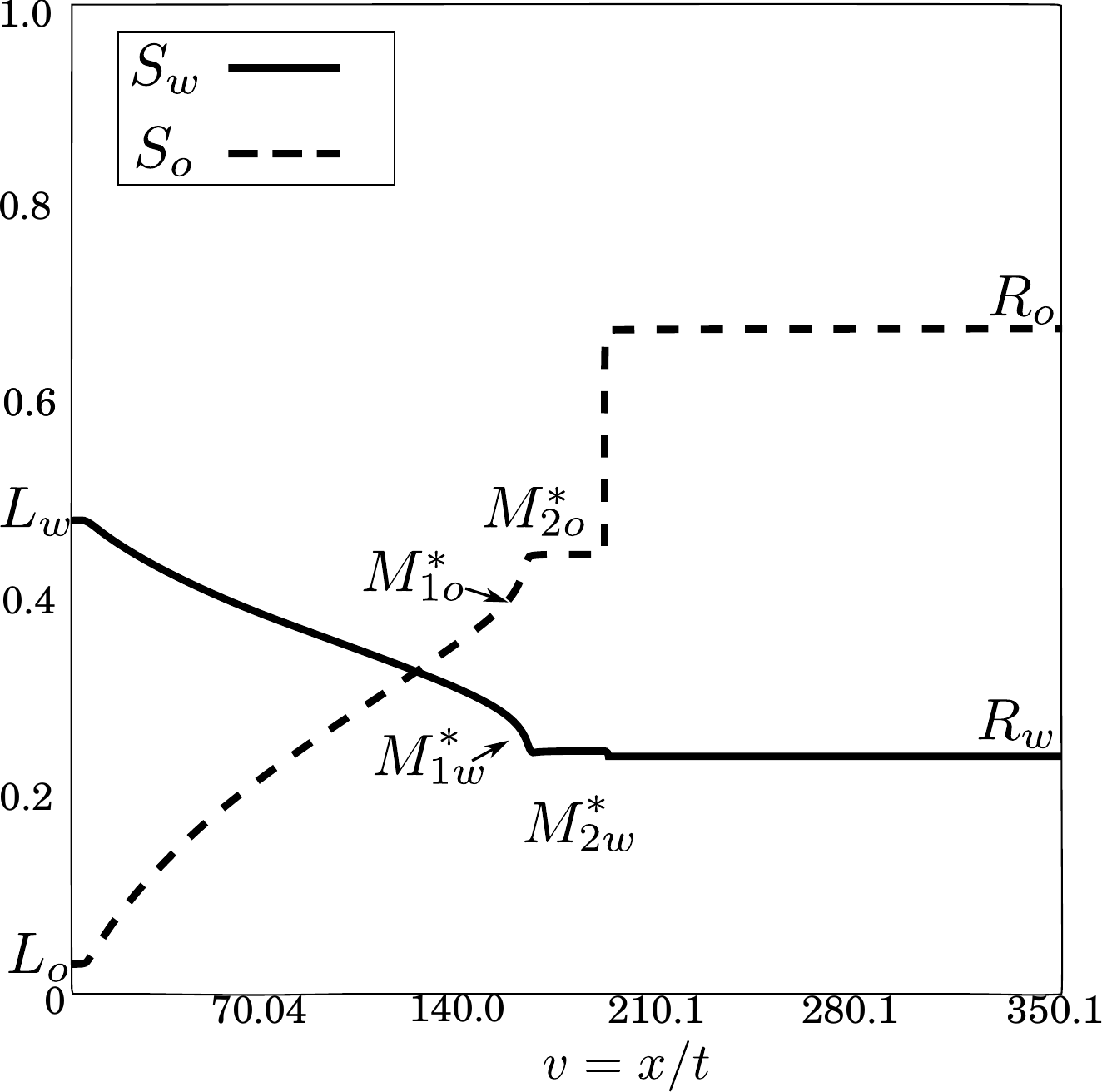}} 
	\caption{Numerical solution of Riemann problem for left state $L=(0.475708,0.02608)^T$ and right state $R=(0.235578, 0.670876)^T$. (a) For $\bmm(U)=I,$ the Riemann solution comprises a $s$-composite from $L$ to $M_2$ with intermediate state $U$, followed by a $u$-shock from $M_2$ to $Z_1$, followed by a $f$-shock from $Z_1$ to $R$. (b) For  $\bmm(U)\neq I$, the solution only comprises a slow wave group followed by a fast shock wave.  The horizontal axis is given by $v = x/t$. The solution profiles are shown at $t=100$. }  	
 \label{fig:SimulationTran}
\end{figure}

\section{Conclusions}
\label{sec:_conclus} 
In this paper, we have constructed the undercompressive shock surface in the expanded 3-dimensional state space of saturations and speed. This was done for two types of diffusion matrices. The first type is the identity matrix; the second arises from capillary pressure effects. In the case of the identity matrix, we have demonstrated analytically the construction as a ruled surface that lies over suitable planes perpendicular to the state space and intersects it along the invariant lines. In the case of the general diffusion matrix, we present the numerical procedure for constructing it. We found that the structure of the undercompressive surfaces is similar for these two diffusion matrices. Although projections $\pi^-$ and $\pi^+$ are singular when $\bmm(U)=I$ (are not invertible) and, therefore, there is no undercompressive mapping, the algorithm we present in constructing the undercompressive surface can still be applied to identify the left and right states that constitute a $u$-shock, which is essential in solving a Riemann problem. 

The surface of undercompressive shock allows us to characterize the set of solutions to the Riemann problem containing three wave groups, with the middle one being transitional because it contains an undercompressive shock wave. Understanding this phenomenon was made more accessible by working in the extended state space. In particular, this characterization facilitates the identification of cases where speed compatibility between classic and transitional wave groups can fail.

Finally, we showed how the choice of the capillarity matrix influences the solutions of Riemann problems. This influence encompasses not only the emergence and disappearance of undercompressive shock waves within a solution but also the alteration of the admissibility of specific shock segments previously restricted by the criterium of viscous profile.

\section*{Acknowledgments}

The authors are grateful to Prof. Aparecido de Souza and Prof. Frederico Furtado for fruitful discussions and comments that improved the manuscript.

 L. L. gratefully acknowledge support from Shell Brasil through the project ``Avançando na modelagem matemática e computacional para apoiar a implementação da tecnologia `Foam-assisted WAG' em reservatórios do Pré-sal'' (ANP 23518-4)  at UFJF and the strategic importance of the support given by ANP through the R\&D levy regulation. 
 
D. M. and L. L. were partly supported by CNPq grant 405366/2021-3.

D. M. was partly supported by CNPq grant 306566/2019-2 and FAPERJ grants E-26/210.738/2014, E-26/202.764/2017, E-26/201.159/2021.

\appendix

\section{Algorithms}
\label{ApendiceA}
In this section, we present the Algorithms implemented in Software ELI to construct the surface of undercompressive shocks for the case $\bmm(U)\neq I$. We are restricting this discussion to the two-dimensional case. 

To ease the exposition of the algorithms described in this section, we need to make a few definitions: 
\begin{itemize}
    \item $\overline{s}$ refers to a line conveniently placed in the phase space. Once the Sotomayor line $\overline{s}$ is chosen, it remains fixed (at least for as long as we are calculating one individual saddle-to-saddle or saddle-to-saddle-node connection);
    \item the subscripts $[\cdot]^-$ and $[\cdot]^+$ are used to differentiate two different states in a Hugoniot locus, both equilibria of \eqref{V9};
    \item given two states $U^-$ and $U^+$, we define the vector $\textbf{d}(U^-, U^+)$ to satisfy the following conditions:
\begin{itemize}
    \item $\textbf{d}(U^-, U^+)$ lives in the line $\overline{s}$;
    \item the starting point of the vector $\textbf{d}(U^-, U^+)$ coincides with a point of a stable manifold for $U^-$ (or else unstable manifold);
    \item the ending point of the vector $\textbf{d}(U^-, U^+)$ coincides with a point of an unstable manifold for $U^+$ (or else stable manifold);
\end{itemize}
\end{itemize}
We may simply write $\textbf{d}$ instead of $\textbf{d}(U^-, U^+)$ if the states $U^-$ and $U^+$ are obvious. Notice that, depending on the choice of $\overline{s}$, the calculation of $\textbf{d}(U^-, U^+)$ may fail due to several reasons, such as orbits being asymptote to the line or orbits ending in another equilibrium before they intersect the line. As such, even though the line $\overline{s}$ must remain fixed while we calculate a single saddle-to-saddle or saddle-to-saddle-node connection, it may vary when we calculate the undercompressive region as a whole.

\subsection{saddle-to-saddle Connection}

In order to start our algorithm for calculating saddle-to-saddle connections, we need to do a search over the Hugoniot locus for $U^-$. This search provides us with two two points $U^+_p$ (previous) and $U^+_n$ (next), such that what is shown in Figure\label{fig:Algo_Saddle_Saddle}(a) happens: the difference vectors $\textbf{d}_p:=\textbf{d}(U^-, U^+_p)$ and $\textbf{d}_n:=\textbf{d}(U^-, U^+_n)$ point to opposite directions. Once this initial search is done, the idea of the algorithm is to consecutively update $U^+_p$ and $U^+_n$ in such a way that the difference vectors $\textbf{d}_p$ and $\textbf{d}_n$ become nearly zero. We do that by selecting an intermediate state $U^+_{c}$ (current), such that:
\begin{itemize}
    \item $U^+_{c}$ is also a point of the Hugoniot locus of $U^-$;
    \item $U^+_{c}$ is a saddle;
    \item $U^+_{c}$ is such that $\sigma _c :=\sigma (U^-, U^+_{c})$ is between $\sigma _p :=\sigma (U^-, U^+_{p})$ and $\sigma _n := \sigma (U^-, U^+_{n})$. In this work we choose $U^+_{c}$ such that $\sigma _c = (\sigma _p + \sigma _n)/2$.
\end{itemize}
and replacing one of $U^+_{p}$ or $U^+_{n}$ by $U^+_{c}$. Figure\label{fig:Algo_Saddle_Saddle}(b) shows one possible such state, alongside the difference vector calculated using it. Notice that $U^+_{c}$ provides us with a difference vector $\textbf{d}_c$ that is smaller than $\textbf{d}_p$ and $\textbf{d}_n$. This only happens because $\textbf{d}_p$ and $\textbf{d}_n$ point to opposite directions ($\textbf{d}_p \cdot \textbf{d}_n$ < 0), and is the reason why the algorithm proposed next converges.

\begin{figure}[ht]
	\centering
	\subfigure[Example of the arrangement necessary to start Algorithm \ref{Alg:1}. $U^+_{p}$ and $U^+_{n}$ are such that their difference vectors $\textbf{d}_p$ and $\textbf{d}_n$ point to opposite directions ($\textbf{d}_p \cdot \textbf{d}_n$ < 0). The yellow orbits are invariant manifolds of \eqref{V9} for $U^-$ and $U^+_{p}$, while the purple orbits are invariant manifolds of \eqref{V9} for $U^-$ and $U^+_{n}$. All of the manifolds are cut short when they cross the Sotomayor line $\overline{s}$.]{\includegraphics[scale=0.27]{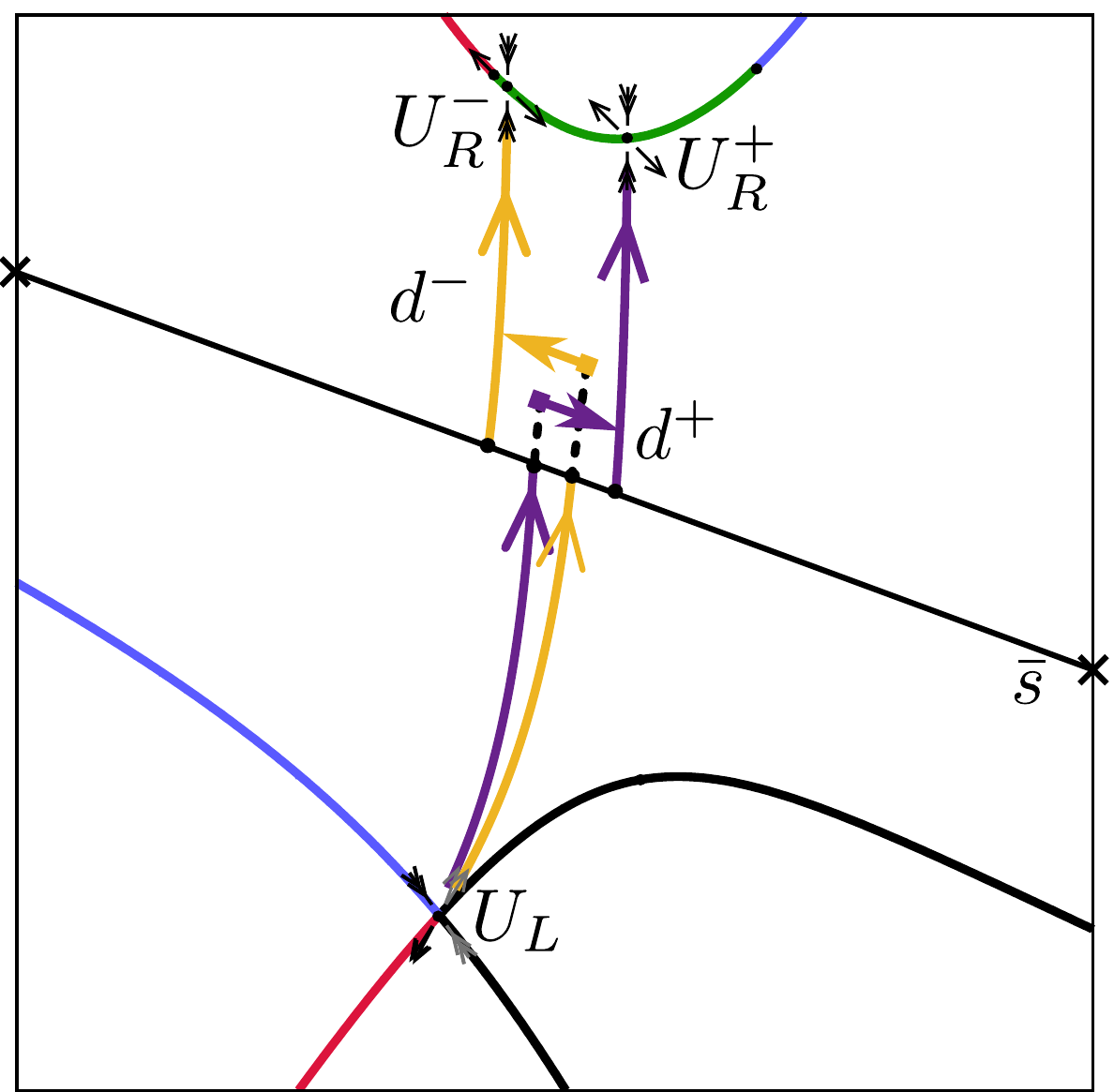}}  \hspace{0.65mm}
	\subfigure[Example of a choice of $U^+_{c}$ that allows Algorithm \ref{Alg:1} to iterate towards calculating a point that provides a saddle-to-saddle connection to $U^-$. Notice that the point is chosen so that the difference vector $\textbf{d}_c$ is smaller than $\textbf{d}_p$ and $\textbf{d}_n$. The yellow orbits are invariant manifolds of \eqref{V9} for $U^-$ and $U^+_{c}$.]{\includegraphics[scale=0.27]{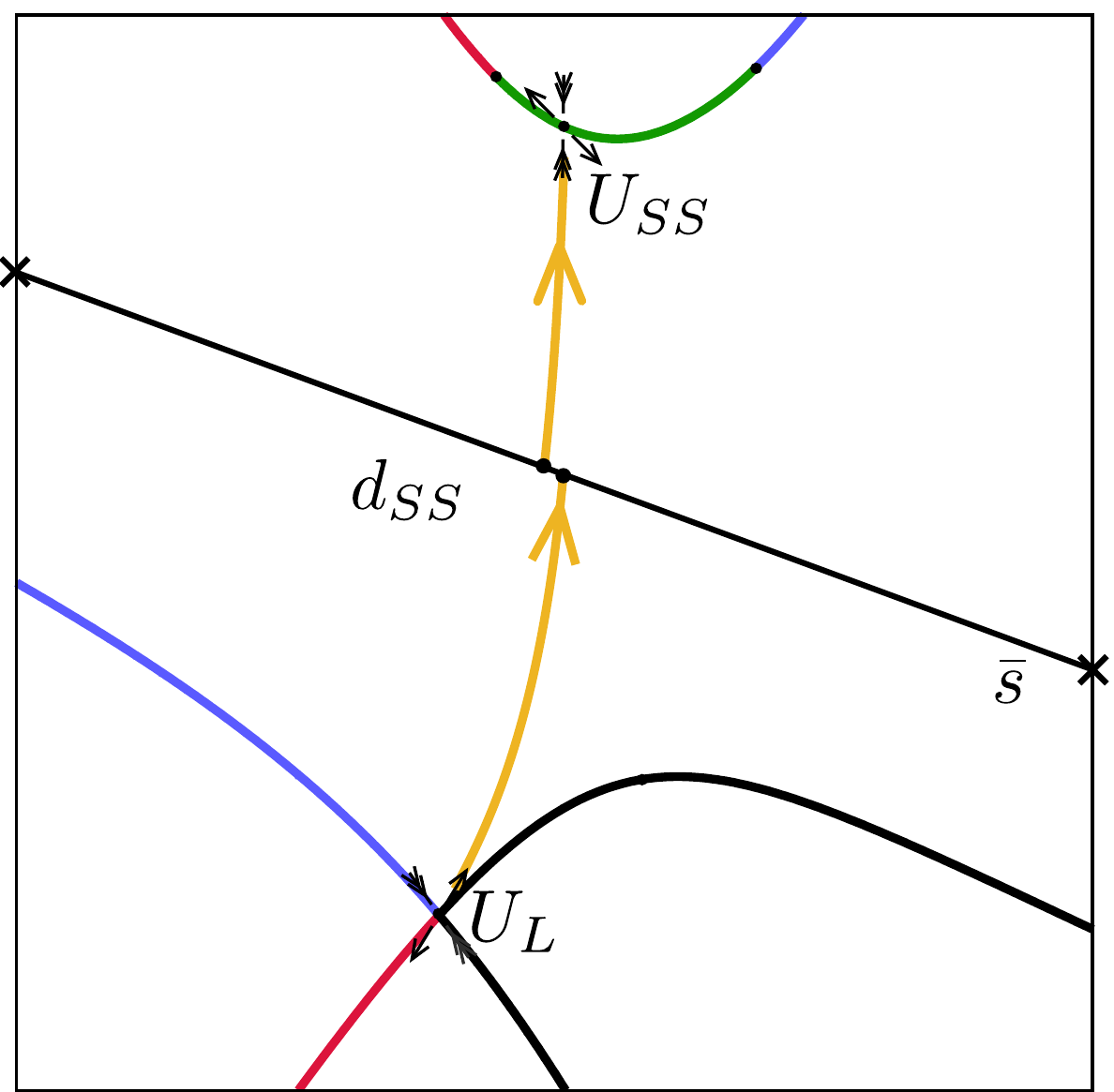}} 

	\caption{Example of an iteration of Algorithm \ref{Alg:1}, used to find saddle-to-saddle connections.}	\label{fig:Algo_Saddle_Saddle}
\end{figure}

Algorithm \ref{Alg:1} outlines the procedure used to calculate saddle-to-saddle connections. Steps 1 and 4 are the most computationally expensive, as they involve calculating all invariant manifolds for a given state and identifying those that intersect the Sotomayor line, \(\overline{s}\). Step 3 is the second most computationally intensive step, as it requires searching for a point on the Hugoniot locus of \(U^-\) where \(\sigma = (\sigma_p + \sigma_n)/2\).

\begin{algorithm}[H]
\caption{Saddle-to-saddle connection in two dimensions}
\begin{algorithmic}[1]\label{Alg:1}
\REQUIRE Saddles $U^{+}_p$, $U^{+}_n$ and $U^-$, Sotomayor line $\overline{s}$, $\delta $
\ENSURE An approximation $U^+_{\delta}$ to the point that has a saddle-to-saddle connection between itself and $U^-$
\STATE Calculate $\textbf{d}_{p}$, $\textbf{d}_{n}$
\WHILE{$\max \left\lbrace ||\textbf{d}_{p}||, || \textbf{d}_{n}|| \right\rbrace > \delta$}
	\STATE Select $U^+_c$ to be a saddle between $U^{+}_{p}$ and $U^{+}_{n}$
	\STATE Calculate $\textbf{d}_{c}$
	\IF {$\textbf{d}_{p} \cdot \textbf{d}_{c}>0$}
		\STATE $U^{+}_{p} \leftarrow  U^+_{c}$
		\STATE $\textbf{d}_{p} \leftarrow \textbf{d}_{c} $
	\ELSE
		\STATE $U^{+}_{n} \leftarrow  U^+_{c}$
		\STATE $\textbf{d}_{n} \leftarrow \textbf{d}_{c}$
	\ENDIF
\ENDWHILE
\STATE $U^+_{\delta} \leftarrow U^+_{c}$
\end{algorithmic}
\end{algorithm}

\subsection{Undercompressive Boundary point}
\begin{figure}[ht]
	\centering
	\subfigure[Zoom in the phase state]{\includegraphics[scale=0.27]{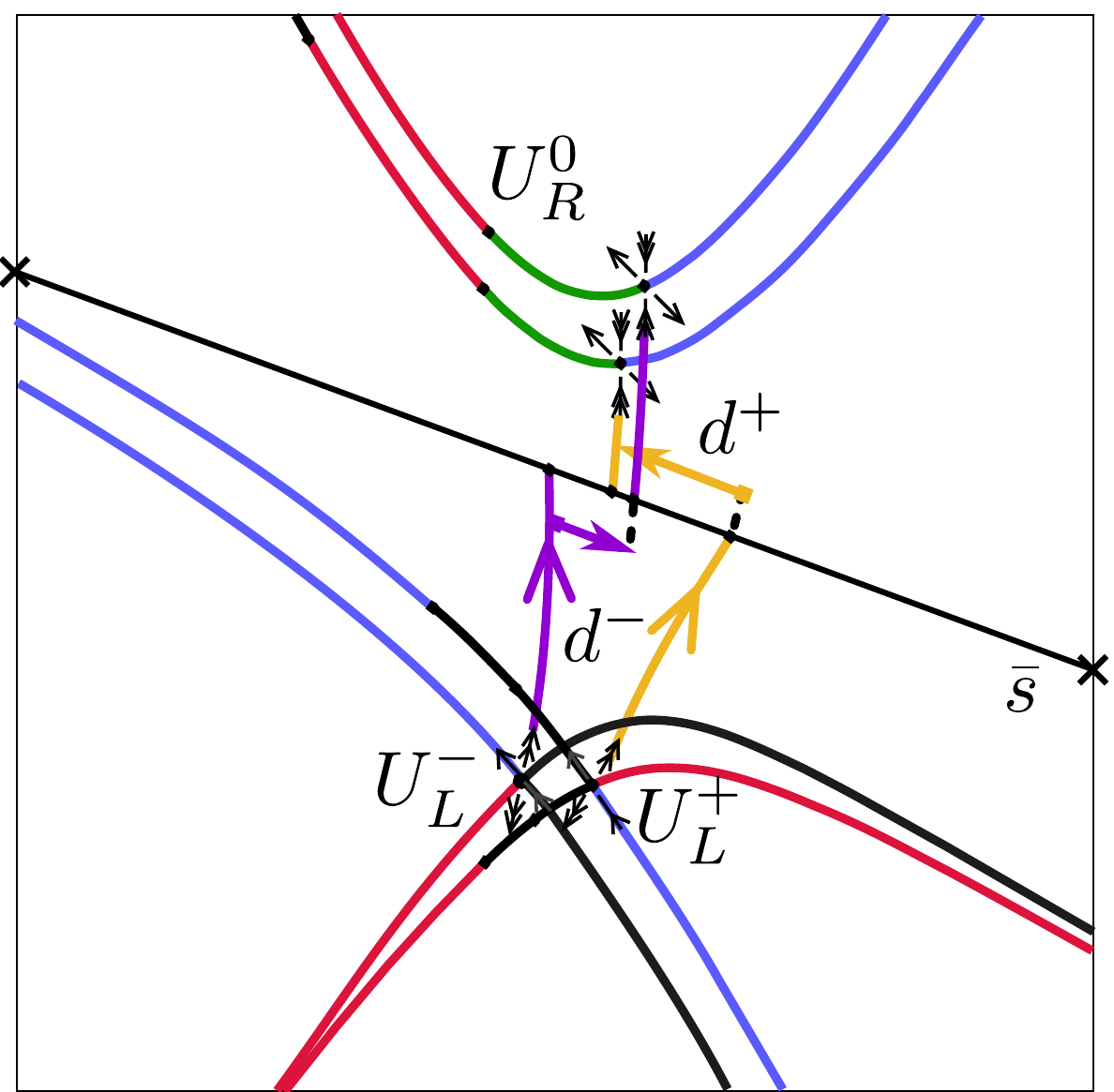}}  \hspace{0.55mm}
	\subfigure[Zoom in the phase state]{\includegraphics[scale=0.27]{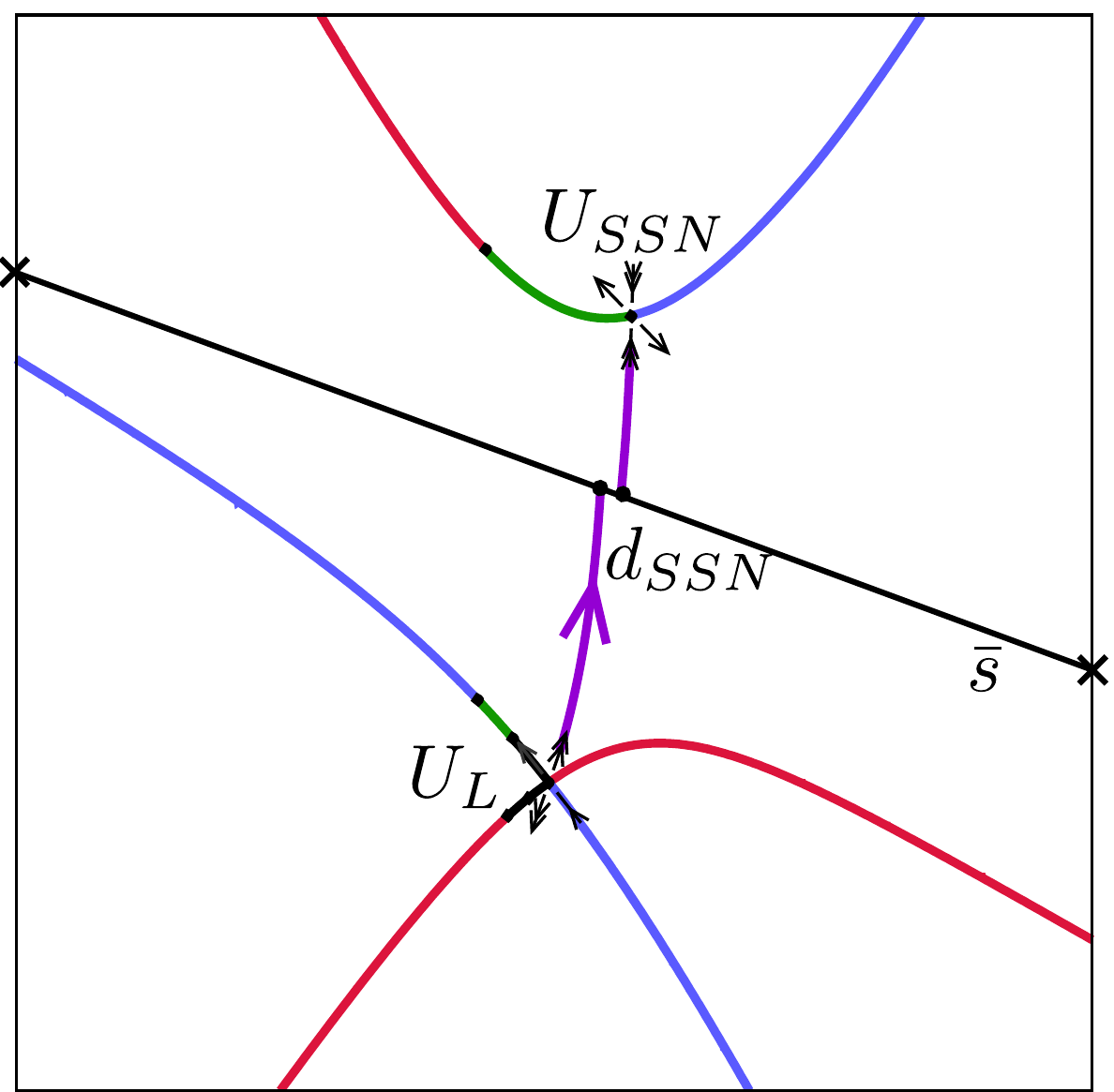}} 

	\caption{ }	\label{fig:Algo_Saddle_Saddle_node}
\end{figure}
\begin{algorithm}[H] \label{Alg:2}
\caption{Undercompressive Boundary point}
\begin{algorithmic}[1]
\REQUIRE Left Sates $U^{-}_L$ providing SS connection and $U^{+}_L$ not providing SS connection, transversal line segment $\overline{s}$, $\delta $
\ENSURE Left State $U_{L}$ at the boundary of the undercompressive region according to $\delta $
\WHILE{$|| U^{+}_{L}-U^{-}_L || > \delta$}
	\STATE $U_{L} \leftarrow  U^{-}_L+\frac{1}{2}(U^{+}_{L}-U^{-}_L)$
	\IF {$U_{L}$ has a saddle-to-saddle connection}
		\STATE $U^{-}_L \leftarrow U_{L}$
	\ELSE
		\STATE $U^{+}_L \leftarrow U_{L}$
	\ENDIF
\ENDWHILE
\STATE $U_{L} \leftarrow  U^{-}_L+\frac{1}{2}(U^{+}_{L}-U^{-}_L)$
\end{algorithmic}
\end{algorithm}
\subsection{saddle-to-saddle-Node Connection }
\begin{algorithm}[H]\label{Alg:3}
\caption{saddle-to-saddle-Node Connection  in two dimensions}
\begin{algorithmic}[1]
\REQUIRE $U^{-}_L$, $U^{+}_L$, saddle-node $U^0_R$, transversal line segment $\overline{s}$, $\delta $
\ENSURE  $U_{L}$ and $U_{SSN}$ providing a SSN conn. according to $\delta $

\STATE $U_{L} \leftarrow  U^{-}_L+\frac{1}{2}(U^{+}_{L}-U^{-}_L)$
\STATE Select $U_{SSN}$ based on $U_{L}$ and $U^0_R$
\STATE Calculate $\textbf{d}^{-}$, $\textbf{d}^{+}$ (difference vectors)
\WHILE{$\max \left\lbrace || \textbf{d}^{-} ||, || \textbf{d}^{+} || \right\rbrace > \delta$}
	\STATE $U_{L} \leftarrow  U^{-}_L+\frac{1}{2}(U^{+}_{L}-U^{-}_L)$
	\STATE Select $U_{SSN}$ based on $U_{L}$ and $U^0_R$
	\STATE Calculate $\textbf{d}_{SSN}$ (difference vector)
	\IF {$\textbf{d}^{-} \cdot \textbf{d}_{SSN}>0$}
		\STATE $U^{-}_{L} \leftarrow  U_{L}$
		\STATE $\textbf{d}^{-} \leftarrow \textbf{d}_{SSN} $
	\ELSE
		\STATE $U^{+}_{L} \leftarrow  U_{L}$
		\STATE $\textbf{d}^{+} \leftarrow \textbf{d}_{SS} $
	\ENDIF
\ENDWHILE
\end{algorithmic}
\end{algorithm}

\begin{thebibliography}{10}
	\expandafter\ifx\csname url\endcsname\relax
	\def\url#1{\texttt{#1}}\fi
	\expandafter\ifx\csname urlprefix\endcsname\relax\def\urlprefix{URL }\fi
	\expandafter\ifx\csname href\endcsname\relax
	\def\href#1#2{#2} \def\path#1{#1}\fi
	
	\bibitem{Abreu2014}
	E.~Abreu, Numerical modelling of three-phase immiscible flow in heterogeneous
	porous media with gravitational effects, Math. Comput. Simulation 97 (2014)
	234--259.
	\newblock \href {https://doi.org/10.1016/j.matcom.2013.09.010}
	{\path{doi:10.1016/j.matcom.2013.09.010}}.
	
	\bibitem{Abreu2006}
	E.~Abreu, J.~Douglas, F.~Furtado, D.~Marchesin, F.~Pereira, Three-phase
	immiscible displacement in heterogeneous petroleum reservoirs, Math. Comput.
	Simulation 73~(1) (2006) 2--20.
	\newblock \href {https://doi.org/10.1016/j.matcom.2006.06.018}
	{\path{doi:10.1016/j.matcom.2006.06.018}}.
	
	\bibitem{Marchesin2001}
	D.~Marchesin, B.~J. Plohr, Wave structure in {WAG} recovery, SPE Journal 6~(02)
	(2001) 209--219.
	\newblock \href {https://doi.org/10.2118/71314-PA}
	{\path{doi:10.2118/71314-PA}}.
	
	\bibitem{L.1990}
	E.~L. Isaacson, D.~Marchesin, B.~J. Plohr, Transitional waves for conservation
	laws, SIAM J. Math. Anal. 21~(4) (1990) 837--866.
	\newblock \href {https://doi.org/10.1137/0521047} {\path{doi:10.1137/0521047}}.
	
	\bibitem{mehrabi2020solution}
	M.~Mehrabi, K.~Sepehrnoori, M.~Delshad, Solution construction to a class of
	{R}iemann problems of multiphase flow in porous media, Transp. Porous Media
	132 (2020) 241--266.
	\newblock \href {https://doi.org/10.1007/s11242-020-01389-x}
	{\path{doi:10.1007/s11242-020-01389-x}}.
	
	\bibitem{corey1956three}
	A.~T. Corey, C.~H. Rathjens, J.~H. Henderson, M.~R.~J. Wyllie, Three-phase
	relative permeability, J. Pet. Technol. 8~(11) (1956) 63--65.
	
	\bibitem{isaacson1988riemann}
	E.~Isaacson, D.~Marchesin, B.~Plohr, B.~Temple, The {R}iemann problem near a
	hyperbolic singularity: the classification of solutions of quadratic riemann
	problems {I}, SIAM J. Appl. Math. 48~(5) (1988) 1009--1032.
	\newblock \href {https://doi.org/10.1137/0148059} {\path{doi:10.1137/0148059}}.
	
	\bibitem{castaneda2012singular}
	P.~Castaneda, D.~Marchesin, F.~Furtado, On singular points for convex
	permeability models, in: 14th International Conference: Theory, Numerics and
	Applications of Hyperbolic Problems, 2012, pp. 415--422.
	
	\bibitem{L.1992a}
	E.~Isaacson, D.~Marchesin, B.~Plohr, J.~B. Temple, Multiphase flow models with
	singular {R}iemann problems, Mat. Apl. Comput 11~(2) (1992) 147--166.
	
	\bibitem{Marchesin2014}
	V.~Matos, P.~Casta{\~{n}}eda, D.~Marchesin, Classification of the umbilic point
	in immiscible three-phase flow in porous media, in: Proceedings of the 14th
	International Conference on Hyperbolic Problems: Theory, Numerics,
	Applications (Padova, Italy), 2012, pp. 791--799.
	
	\bibitem{Schaeffer1987}
	D.~G. Schaeffer, M.~Shearer, The classification of 2 $\times$ 2 systems of
	non-strictly hyperbolic conservation laws, with application to oil recovery,
	Comm. Pure Appl. Math. 40~(2) (1987) 141--178.
	\newblock \href {https://doi.org/10.1002/cpa.3160400202}
	{\path{doi:10.1002/cpa.3160400202}}.
	
	\bibitem{bell1986conservation}
	J.~B. Bell, J.~A. Trangenstein, G.~R. Shubin, Conservation laws of mixed type
	describing three-phase flow in porous media, SIAM J. Appl. Math. 46~(6)
	(1986) 1000--1017.
	\newblock \href {https://doi.org/10.1137/0146059} {\path{doi:10.1137/0146059}}.
	
	\bibitem{fayers1989extension}
	F.~Fayers, Extension of {S}tone's method 1 and conditions for real
	characteristics in three-phase flow, SPE Reservoir Engineering 4~(04) (1989)
	437--445.
	\newblock \href {https://doi.org/10.2118/16965-PA}
	{\path{doi:10.2118/16965-PA}}.
	
	\bibitem{holden1990strict}
	L.~Holden, On the strict hyperbolicity of the {B}uckley--{L}everett equations
	for three-phase flow in a porous medium, SIAM J. Appl. Math. 50~(3) (1990)
	667--682.
	\newblock \href {https://doi.org/10.1137/0150039} {\path{doi:10.1137/0150039}}.
	
	\bibitem{Holdem1987}
	H.~Holden, On the {R}iemann problem for a prototype of a mixed type
	conservation law, Comm. Pure Appl. Math. 40~(2) (1987) 229--264.
	\newblock \href {https://doi.org/10.1002/cpa.3160400206}
	{\path{doi:10.1002/cpa.3160400206}}.
	
	\bibitem{azevedo1995multiple}
	A.~V. Azevedo, D.~Marchesin, Multiple viscous solutions for systems of
	conservation laws, Trans. Amer. Math. Soc. 347~(8) (1995) 3061--3077.
	\newblock \href {https://doi.org/10.1090/S0002-9947-1995-1277093-8}
	{\path{doi:10.1090/S0002-9947-1995-1277093-8}}.
	
	\bibitem{Matos2008}
	V.~Matos, D.~Marchesin, Large viscous solutions for small data in systems of
	conservation laws that change type, J. Hyperbolic Differ. Equ. 5~(02) (2008)
	257--278.
	\newblock \href {https://doi.org/10.1142/S0219891608001477}
	{\path{doi:10.1142/S0219891608001477}}.
	
	\bibitem{Kulikov1968}
	A.~G. Kulikovskii, Surfaces of discontinuity separating two perfect media of
	different properties. {R}ecombination waves in magnetohydrodynamics, J. Appl.
	Math. Mech. 32~(6) (1968) 1145--1152.
	\newblock \href {https://doi.org/10.1016/0021-8928(68)90046-4}
	{\path{doi:10.1016/0021-8928(68)90046-4}}.
	
	\bibitem{Mailybaev2004}
	A.~A. Mailybaev, D.~Marchesin, Dual-family viscous shock waves in systems of
	conservation laws: a surprising example, in: Proc. Conf. on Analysis,
	Modeling and Computation of PDE and Multiphase Flow, 2004, pp. 125--135.
	
	\bibitem{Dan2006}
	D.~Marchesin, A.~A. Mailybaev, Dual-family viscous shock waves in n
	conservation laws with application to multi-phase flow in porous media, Arch.
	Ration. Mech. Anal. 182 (2006) 1--24.
	\newblock \href {https://doi.org/10.1007/s00205-005-0419-9}
	{\path{doi:10.1007/s00205-005-0419-9}}.
	
	\bibitem{V.2002}
	A.~V. Azevedo, D.~Marchesin, B.~Plohr, K.~Zumbrun, Capillary instability in
	models for three-phase flow, Z. fur Angew. Math. Phys. 53 (2002) 713--746.
	\newblock \href {https://doi.org/10.1007/s00033-002-8180-5}
	{\path{doi:10.1007/s00033-002-8180-5}}.
	
	\bibitem{Majda1985}
	A.~Majda, R.~L. Pego, Stable viscosity matrices for systems of conservation
	laws, J. Differ. Equ. 56~(2) (1985) 229--262.
	\newblock \href {https://doi.org/10.1016/0022-0396(85)90107-X}
	{\path{doi:10.1016/0022-0396(85)90107-X}}.
	
	\bibitem{issacson1992global}
	E.~L. Issacson, D.~Marchesin, C.~Frederico~Palmeira, B.~J. Plohr, A global
	formalism for nonlinear waves in conservation laws, Commun. Math. Phys.
	146~(3) (1992) 505--552.
	
	\bibitem{marchesin1994topology}
	D.~Marchesin, C.~F.~B. Palmeira, Topology of elementary waves for mixed-type
	systems of conservation laws, J. Dynam. Differ. Equ. 6 (1994) 427--446.
	
	\bibitem{azevedo2010topological}
	A.~Azevedo, C.~Eschenazi, D.~Marchesin, C.~Palmeira, Topological resolution of
	{R}iemann problems for pairs of conservation laws, Quart. Appl. Math. 68~(2)
	(2010) 375--393.
	
	\bibitem{Guckenheimer1986}
	J.~Guckenheimer, P.~Holmes, {Nonlinear Oscillations, Dynamical Systems, and
		Bifurcations of Vector Fields}, Springer-Verlag, New York, 1986.
	
	\bibitem{ELI_web}
	{ELI}, {I}nteractive {G}raphical {R}iemann {P}roblem {S}olver.,
	https://eli.fluid.impa.br/, {A}ccessed : 2024-12-04.
	
	\bibitem{V.2010}
	A.~V. Azevedo, A.~J. de~Souza, F.~Furtado, D.~Marchesin, B.~Plohr, The solution
	by the wave curve method of three-phase flow in virgin reservoirs, Transp.
	Porous Media 83~(1) (2010) 99--125.
	\newblock \href {https://doi.org/10.1007/s11242-009-9508-9}
	{\path{doi:10.1007/s11242-009-9508-9}}.
	
	\bibitem{Azevedo2014}
	A.~V. Azevedo, A.~J. de~Souza, F.~Furtado, D.~Marchesin, Uniqueness of the
	{R}iemann solution for three-phase flow in a porous medium, SIAM J. Appl.
	Math. 74~(6) (2014) 1967--1997.
	\newblock \href {https://doi.org/10.1137/140954623}
	{\path{doi:10.1137/140954623}}.
	
	\bibitem{Lozano2018}
	L.~Lozano, Diffusive effects in {R}iemann solutions for the three phase flow in
	porous media, Ph.D. thesis, Instituto Nacional de Matem{\'a}tica Pura e
	Aplicada, IMPA (2018).
	
	\bibitem{Andrade2017}
	P.~Andrade, A.~de~Souza, F.~Furtado, D.~Marchesin, Three-phase fluid
	displacements in a porous medium, J. Hyperbolic Differ. Equ. 15~(04) (2018)
	731--753.
	\newblock \href {https://doi.org/10.1142/S0219891618500236}
	{\path{doi:10.1142/S0219891618500236}}.
	
	\bibitem{shearer1987}
	M.~Shearer, D.~G. Schaeffer, D.~Marchesin, P.~L. Paes-Leme, Solution of the
	{R}iemann problem for a prototype 2$\times$ 2 system of non-strictly
	hyperbolic conservation laws, Arch. Ration. Mech. Anal. 97 (1987) 299--320.
	\newblock \href {https://doi.org/10.1007/BF00280409}
	{\path{doi:10.1007/BF00280409}}.
	
	\bibitem{Medeiros1992}
	H.~B. Mederios, Stable hyperbolic singularities for three-phase flow models in
	oil reservoir simulation, Acta Appl. Math. 28~(2) (1992) 135--159.
	\newblock \href {https://doi.org/10.1007/BF00047553}
	{\path{doi:10.1007/BF00047553}}.
	
	\bibitem{DeSouza1992}
	A.~J. de~Souza, {Stability of singular fundamental solutions under
		perturbations for flow in porous media}, Mat. Apl. Comput. 11~(2) (1992) 43.
	
	\bibitem{Lax1957}
	P.~D. Lax, Hyperbolic systems of conservation laws {II}, Comm. Pure Appl. Math.
	10~(4) (1957) 537--566.
	\newblock \href {https://doi.org/10.1002/cpa.3160100406}
	{\path{doi:10.1002/cpa.3160100406}}.
	
	\bibitem{Smoller1994}
	J.~Smoller, Shock waves and reaction—diffusion equations, Vol. 258, Springer
	Science \& Business Media, 2012.
	
	\bibitem{Liu1975}
	T.-P. Liu, The {R}iemann problem for general systems of conservation laws, J.
	Differ. Equ. 18~(1) (1975) 218--234.
	\newblock \href {https://doi.org/10.1016/0022-0396(75)90091-1}
	{\path{doi:10.1016/0022-0396(75)90091-1}}.
	
	\bibitem{Gelfand1959}
	I.~M. Gel'fand, Some problems in the theory of quasilinear equation, Usp. Math.
	Nauk. 14 (1959) 87--158, {E}ng. {T}rans. in Amer. Math. Soc. Trans. Ser. 2,
	29, 295-381 (1963).
	
	\bibitem{Petrova2022}
	Y.~Petrova, B.~J. Plohr, D.~Marchesin, Vanishing adsorption limit of riemann
	problem solutions for the polymer model, Journal of Hyperbolic Differential
	Equations 21~(02) (2024) 299--327.
	\newblock \href {https://doi.org/10.1142/S0219891624500103}
	{\path{doi:10.1142/S0219891624500103}}.
	
	\bibitem{Schecter1996}
	S.~Schecter, D.~Marchesin, B.~J. Plohr, Structurally stable {R}iemann
	solutions, J. Differ. Equ. 126~(2) (1996) 303--354.
	\newblock \href {https://doi.org/10.1006/jdeq.1996.0053}
	{\path{doi:10.1006/jdeq.1996.0053}}.
	
	\bibitem{Gomes1989}
	M.~E.~S. Gomes, Riemann problems requiring a viscous profile entropy condition,
	Adv. Appl. Math. 10~(3) (1989) 285--323.
	\newblock \href {https://doi.org/10.1016/0196-8858(89)90016-X}
	{\path{doi:10.1016/0196-8858(89)90016-X}}.
	
	\bibitem{Keyfitz1980}
	B.~L. Keyfitz, H.~C. Kranzer, A system of non-strictly hyperbolic conservation
	laws arising in elasticity theory, Arch. Ration. Mech. Anal. 72~(3) (1980)
	219--241.
	
	\bibitem{Aparecido1998}
	A.~J. de~Souza, D.~Marchesin, Conservation laws possessing contact
	characteristic fields with singularities, Acta Appl. Math. 51 (1998)
	353--364.
	\newblock \href {https://doi.org/10.1023/A:1005928309554}
	{\path{doi:10.1023/A:1005928309554}}.
	
	\bibitem{Volpert1994}
	A.~I. Volpert, V.~A. Volpert, V.~A. Volpert, Traveling wave solutions of
	parabolic systems, Vol. 140, American Mathematical Soc., 1994.
	
	\bibitem{V.2015}
	V.~Matos, A.~V. Azevedo, J.~C. Da~Mota, D.~Marchesin, Bifurcation under
	parameter change of {R}iemann solutions for nonstrictly hyperbolic systems,
	Z. fur Angew. Math. Phys. 66~(4) (2015) 1413--1452.
	\newblock \href {https://doi.org/10.1007/s00033-014-0469-7}
	{\path{doi:10.1007/s00033-014-0469-7}}.
	
	\bibitem{Oleinik1957}
	O.~Ole{\u{\i}}nik, On the uniqueness of the generalized solution of the
	{C}auchy problem for a nonlinear system of equations occurring in mechanics,
	Usp. Mat. Nauk 12 (1957) 169--176.
	
	\bibitem{castaneda2016}
	P.~Casta{\~n}eda, E.~Abreu, F.~Furtado, D.~Marchesin, On a universal structure
	for immiscible three-phase flow in virgin reservoirs, Comput. Geosci. 20
	(2016) 171--185.
	\newblock \href {https://doi.org/10.1007/s10596-016-9556-5}
	{\path{doi:10.1007/s10596-016-9556-5}}.
	
	\bibitem{Asakura}
	F.~Asakura, Stone-marchesin model equations of three-phase flow in oil
	reservoir simulation, in: Proceedings of the 14th International Conference on
	Hyperbolic Problems: Theory, Numerics, Applications (Padova, Italy), 2012,
	pp. 1--56.
	
	\bibitem{Wendroff1972}
	B.~Wendroff, The {R}iemann problem for materials with nonconvex equations of
	state i: Isentropic flow; ii general flow, J. Math. Anal. Appl. 38~(2) (1972)
	454--466.
	\newblock \href {https://doi.org/10.1016/0022-247X(72)90103-5}
	{\path{doi:10.1016/0022-247X(72)90103-5}}.
	
	\bibitem{L.2016}
	P.~L. Andrade, A.~J. de~Souza, F.~Furtado, D.~Marchesin, Oil displacement by
	water and gas in a porous medium: the {R}iemann problem, Bull. Braz. Math.
	Soc. (N.S.) 47 (2016) 77--90.
	\newblock \href {https://doi.org/10.1007/s00574-016-0123-4}
	{\path{doi:10.1007/s00574-016-0123-4}}.
	
	\bibitem{lambert2020mathematics}
	W.~Lambert, A.~Alvarez, I.~Ledoino, D.~Tadeu, D.~Marchesin, J.~Bruining,
	Mathematics and numerics for balance partial differential-algebraic equations
	(pdaes), J. Sci. Comput. 84~(2) (2020) 29.
	\newblock \href {https://doi.org/10.1007/s10915-020-01279-w}
	{\path{doi:10.1007/s10915-020-01279-w}}.
	
	\bibitem{Lozano2020pro}
	L.~F.~L. Guerrero, D.~Marchesin, Diffusive {R}iemann solutions for 3-phase flow
	in porous media, Proceeding Series of the Brazilian Society of Computational
	and Applied Mathematics 7~(1) (2020).
	\newblock \href {https://doi.org/10.5540/03.2020.007.01.0372}
	{\path{doi:10.5540/03.2020.007.01.0372}}.
	
\end{thebibliography}






\end{document}